\newcommand{\nc}{\newcommand}
\nc{\red}[1]{\textcolor{red}{#1}}
\nc{\exto}[1]{\stackrel{#1}{\longrightarrow}}
\nc{\dlim}{{\mathop{\lim\limits_{\longrightarrow}\,}}}
\nc{\ilim}{{\mathop{\lim\limits_{\longleftarrow}\,}}}
\nc{\hocolim}{{\mathop{\sf hocolim}\,}}
\nc{\holim}{{\mathop{\sf holim}}}
\nc{\lan}{\big\langle}
\nc{\ran}{\big\rangle}
\nc{\kk}{{\mathsf{k}}}
\nc{\C}{{\mathbb{C}}}
\nc{\HH}{{\mathbf{H}}}
\nc{\DD}{{\mathbb{D}}}
\nc{\LL}{{\mathbb{L}}}
\nc{\PP}{{\mathbb{P}}}
\nc{\QQ}{{\mathbb{Q}}}
\nc{\RR}{{\mathbb{R}}}
\nc{\ZZ}{{\mathbb{Z}}}
\nc{\CA}{{\mathcal{A}}}
\nc{\CB}{{\mathcal{B}}}
\nc{\CC}{{\mathcal{C}}}
\nc{\D}{{\mathcal{D}}}
\nc{\SCA}{{\mathscr{A}}}
\nc{\SCB}{{\mathscr{B}}}
\nc{\SCC}{{\mathscr{C}}}
\nc{\SCD}{{\mathscr{D}}}
\nc{\SCE}{{\mathscr{E}}}
\nc{\SCH}{{\mathscr{H}}}
\nc{\CE}{{\mathcal{E}}}
\nc{\CF}{{\mathcal{F}}}
\nc{\CG}{{\mathcal{G}}}
\nc{\CH}{{\mathcal{H}}}
\nc{\CK}{{\mathscr{K}}}
\nc{\CL}{{\mathcal{L}}}
\nc{\CM}{{\mathcal{M}}}
\nc{\CN}{{\mathcal{N}}}
\nc{\CO}{{\mathcal{O}}}
\nc{\CQ}{{\mathcal{Q}}}
\nc{\CR}{{\mathcal{R}}}
\nc{\CS}{{\mathcal{S}}}
\nc{\CT}{{\mathcal{T}}}
\nc{\CU}{{\mathcal{U}}}
\nc{\CV}{{\mathscr{V}}}
\nc{\CW}{{\mathcal{W}}}
\nc{\CX}{{\mathcal{X}}}
\nc{\CY}{{\mathcal{Y}}}
\nc{\CMo}{{\mathcal{M}^\circ}}
\nc{\Co}{{{C}^\circ}}
\nc{\BY}{{\overline{Y}}}
\nc{\BYD}{{\overline{Y}{}^{|D|}}}
\nc{\OZ}{{\overline{Z}}}
\nc{\bg}{{\bar{g}}}
\nc{\ba}{{\mathbf{a}}}
\nc{\bb}{{\mathbf{b}}}
\nc{\be}{{\mathbf{e}}}
\nc{\bq}{{\mathbf{q}}}
\nc{\bx}{{\mathbf{x}}}
\nc{\by}{{\mathbf{y}}}
\nc{\bz}{{\mathbf{z}}}
\nc{\BB}{{\mathbf{B}}}
\nc{\BC}{{\mathbf{C}}}
\nc{\BE}{{\mathbf{E}}}
\nc{\BD}{{\mathbf{D}}}
\nc{\BG}{{\mathbf{G}}}
\nc{\BM}{{\mathbf{M}}}
\nc{\BP}{{\mathbf{P}}}
\nc{\bp}{{\mathbf{p}}}
\nc{\BU}{{\mathbf{U}}}
\nc{\BZ}{{\mathbf{Z}}}
\nc{\BPr}{{\mathsf{P}}}
\nc{\BL}{{\mathbf{L}}}
\nc{\BR}{{\mathbf{R}}}
\nc{\BRO}[1]{{{\mathbf{R}}^{\circ}_{#1}}}
\nc{\BRD}[1]{{{\mathbf{R}}^{|D|}_{#1}}}
\nc{\BRP}[1]{{{\mathbf{R}}^{1}_{#1}}}
\nc{\BRTP}[1]{{{\mathbf{\tilde{R}}}{}^{1}_{#1}}}
\nc{\BS}{{\mathbf{S}}}
\nc{\BMS}{{{\mathbf{M}}^{{s}}}}
\nc{\BMSS}{{{\mathbf{M}}^{{ss}}}}
\nc{\BMZ}{{\mathbf{M}^{\circ}}}
\nc{\BCL}{{\mathbf{L}}}
\nc{\PCC}{{{}^\perp\CC}}
\nc{\Cl}{{\mathsf{Cliff}}}
\nc{\Clev}{{\mathop{\mathsf{Cliff}}^{\circ}}}
\nc{\FA}{{\mathfrak{A}}}
\nc{\FB}{{\mathfrak{B}}}
\nc{\FU}{{\mathfrak{U}}}
\nc{\fa}{{\mathfrak{a}}}
\nc{\fb}{{\mathfrak{b}}}
\nc{\fg}{{\mathfrak{g}}}
\nc{\fn}{{\mathfrak{n}}}
\nc{\fp}{{\mathfrak{p}}}
\nc{\FD}{{\mathfrak{D}}}
\nc{\FE}{{\mathfrak{E}}}
\nc{\FL}{{\mathfrak{L}}}
\nc{\FM}{{\mathfrak{M}}}
\nc{\FS}{{\mathsf{S}}}
\nc{\se}{{\mathsf{e}}}
\nc{\sfc}{{\mathsf{c}}}
\nc{\sfch}{{\mathsf{ch}}}
\nc{\sfh}{{\mathsf{h}}}
\nc{\SK}{{\mathsf{K}}}
\nc{\SM}{{\mathsf{M}}}
\nc{\SO}{{\mathsf{O}}}
\nc{\SQ}{{\mathsf{Q}}}
\renewcommand{\SS}{{\mathsf{S}}}
\nc{\SPV}{{\mathsf{S}^+\mathsf{V}}}
\nc{\SMV}{{\mathsf{S}^-\mathsf{V}}}
\nc{\SPMV}{{\mathsf{S}^\pm\mathsf{V}}}
\nc{\SX}{{S_X}}
\nc{\SY}{{S_Y}}
\nc{\phipsi}{{q}}
\nc{\eps}{\varepsilon}
\nc{\pim}{{\pi_-}}
\nc{\pip}{{\pi_+}}
\nc{\TE}{{\tilde{\CE}}}
\nc{\TQ}{{\tilde{Q}}}
\nc{\TCF}{{\tilde{\CF}}}
\nc{\TCG}{{\tilde{\CG}}}
\nc{\TCL}{{\tilde{\CL}}}
\nc{\TF}{{\tilde{F}}}
\nc{\TW}{{\tilde{W}}}
\nc{\TCC}{{\tilde{\CC}}}
\nc{\TCX}{{\tilde{\CX}}}
\nc{\TCY}{{\tilde{\CY}}}
\nc{\TPhi}{{\tilde{\Phi}}}
\nc{\OPhi}{{\bar{\Phi}}}
\nc{\txi}{{\tilde{\xi}}}
\nc{\tp}{{\tilde{p}}}
\nc{\tq}{{\tilde{q}}}
\nc{\tzeta}{{\tilde{\zeta}}}
\nc{\tpi}{{\tilde{\pi}}}
\nc{\halpha}{{\hat{\alpha}}}
\nc{\HCA}{{\hat{\CA}}}
\nc{\HCB}{{\hat{\CB}}}
\nc{\HCC}{{\hat{\CC}}}
\nc{\HE}{{\widehat{\CE}}}
\nc{\HX}{{\hat{X}}}
\nc{\hxi}{{\hat{\xi}}}
\nc{\UH}{{\mathcal{H}}}
\nc{\TM}{{\widetilde{M}}}
\nc{\TCM}{{\widetilde{\CM}}}
\nc{\TU}{{\widetilde{U}}}
\nc{\TX}{{\widetilde{X}}}
\nc{\TY}{{\widetilde{Y}}}
\nc{\TYO}{{{\widetilde{Y}}^\circ}}
\nc{\barf}{{\bar{f}}}
\nc{\te}{{\tilde{e}}{}}
\nc{\tf}{{\tilde{f}}}
\nc{\tg}{{\tilde{g}}}
\nc{\ti}{{\tilde{\imath}}}
\nc{\tj}{{\tilde{\jmath}}}
\nc{\ty}{{\tilde{y}}}
\nc{\tphi}{{\tilde{\phi}}}
\nc{\urho}{{\underline{\rho}}}
\nc{\LRA}{\Leftrightarrow}
\nc{\RA}{\Rightarrow}
\nc{\lotimes}{\mathbin{\mathop{\otimes}\limits^{\mathbb{L}}}}
\nc{\CEnd}{\mathop{\mathcal{E}\mathit{nd}}\nolimits}
\nc{\CExt}{\mathop{\mathcal{E}\mathit{xt}}\nolimits}
\nc{\CHom}{\mathop{\mathscr{H}\!\!\mathit{om}}\nolimits}
\nc{\RH}{\mathop{{\mathsf{R}}\Gamma}\nolimits}
\nc{\RGamma}{\mathop{{\mathsf{R}}\Gamma}\nolimits}
\nc{\RHom}{\mathop{\mathsf{RHom}}\nolimits}
\nc{\RCHom}{\mathop{\mathsf{R}\!\!\CHom}\nolimits}
\nc{\RG}{\mathop{\mathsf{R\Gamma}}\nolimits}
\nc{\Hom}{\mathop{\mathsf{Hom}}\nolimits}
\nc{\Ext}{\mathop{\mathsf{Ext}}\nolimits}
\nc{\End}{\mathop{\mathsf{End}}\nolimits}
\nc{\Tor}{\mathop{\mathsf{Tor}}\nolimits}
\nc{\Tordim}{\mathop{\mathsf{Tor}\text{\rm-}\mathsf{dim}}\nolimits}
\nc{\Hilb}{\mathop{\mathsf{Hilb}}\nolimits}
\nc{\Spec}{\mathop{\mathsf{Spec}}\nolimits}
\nc{\Pic}{\mathop{\mathsf{Pic}}\nolimits}
\renewcommand{\Im}{\mathop{\mathsf{Im}}\nolimits}
\nc{\Tr}{\mathop{\mathsf{Tr}}\nolimits}
\nc{\tr}{\mathop{\mathsf{tr}}\nolimits}
\nc{\sd}{\mathop{\mathsf{sd}}\nolimits}
\nc{\LInd}{\mathop{\mathsf{LInd}}\nolimits}
\nc{\Res}{\mathop{\mathsf{Res}}\nolimits}
\nc{\Cone}{\mathop{\mathsf{Cone}}\nolimits}
\nc{\Fiber}{\mathop{\mathsf{Fiber}}\nolimits}
\nc{\Ker}{\mathop{\mathsf{Ker}}\nolimits}
\nc{\Coker}{\mathop{\mathsf{Coker}}\nolimits}
\nc{\codim}{\mathop{\mathsf{codim}}\nolimits}
\nc{\sing}{{\mathsf{sing}}}
\nc{\supp}{\mathop{\mathsf{supp}}}
\nc{\vol}{\mathop{\mathsf{vol}}\nolimits}
\nc{\ch}{\mathop{\mathsf{ch}}\nolimits}
\nc{\perf}{{\mathsf{perf}}}
\nc{\rank}{\mathop{\mathsf{rank}}}
\nc{\Pf}{{\mathsf{Pf}}}
\nc{\Gr}{{\mathsf{Gr}}}
\nc{\OGr}{{\mathsf{OGr}}}
\nc{\Flag}{{\mathsf{Fl}}}
\nc{\Kosz}{{\mathsf{Kosz}}}
\nc{\LGr}{{\mathsf{LGr}}}
\nc{\GTGr}{{\mathsf{G_2Gr}}}
\nc{\GTF}{{\mathsf{G_2F}}}
\nc{\OF}{{\mathsf{OF}}}
\nc{\Fl}{{\mathsf{Fl}}}
\nc{\Bl}{{\mathsf{Bl}}}
\nc{\Br}{{\mathsf{Br}}}
\nc{\Fr}{{\mathsf{Fr}}}
\nc{\GL}{{\mathsf{GL}}}
\nc{\PGL}{{\mathsf{PGL}}}
\nc{\SL}{{\mathsf{SL}}}
\nc{\SP}{{\mathsf{Sp}}}
\nc{\Spin}{{\mathsf{Spin}}}
\nc{\Tot}{{\mathsf{Tot}}}
\nc{\ev}{{\mathsf{ev}}}
\nc{\od}{{\mathsf{odd}}}
\nc{\coev}{{\mathsf{coev}}}
\nc{\id}{{\mathsf{id}}}
\nc{\opp}{{\mathsf{opp}}}
\nc{\PS}{{{\PP^3}}}
\nc{\Qu}{{{Q^3}}}
\nc{\tdim}{\mathop{\Tor\dim}}
\nc{\gdim}{\mathop{\mathrm{gdim}}}
\nc{\cydim}{\mathop{\dim^{{\mathrm{CY}}}}}
\nc{\ecart}{{\fbox{$\scriptstyle\mathsf{EC}$}}}
\nc{\ad}{{\mathop{\mathsf ad}}}
\nc{\sg}{{\mathop{\mathsf sg}}}
\nc{\hf}{{\mathop{\mathsf hf}}}
\nc{\gr}{{\mathop{\mathsf gr}}}
\nc{\qgr}{{\mathop{\mathsf qgr}}}
\nc{\Perf}{{\mathop{{\mathsf{Perf}}}}}
\nc{\Coh}{{\mathop{{\mathsf{Coh}}}}}
\nc{\Com}{{\mathop{{\mathsf{Com}}}}}
\nc{\Qis}{{\mathop{{\mathsf{Qiso}}}}}
\nc{\coh}{{\mathop{{\mathsf{coh}}}}}
\nc{\qcoh}{{\mathop{{\mathsf{qcoh}}}}}
\nc{\dgm}{{\mathop{{\mathsf{dgm}\text{-}}}}}
\nc{\acy}{{\mathop{{\mathsf{acycl}\text{-}}}}}
\nc{\hproj}{{\mathop{{\mathsf{hproj}\text{-}}}}}
\nc{\com}{{\mathop{{\mathsf{com}}}}}
\nc{\comp}{{\mathop{{\mathsf{comp}}}}}
\nc{\Ab}{{\mathop{\mathcal{A}\mathit{b}}}}
\nc{\Open}{{\mathop{\mathsf{Open}}}}
\nc{\Ccoh}{{\mathop{\mathsf Ccoh}}}
\nc{\Qcoh}{{\mathop{\mathsf{Qcoh}}}}
\nc{\arcs}{{\mathop{\mathsf{arcs}}}}
\nc{\Shuf}{{\mathop{\mathsf{Shuf}}}}
\nc{\circles}{{\mathop{\mathsf{circles}}}}
\nc{\normcircles}{\underline{\mathop{\mathsf{circles}}}}
\nc{\At}{{\mathop{\mathsf{At}}\nolimits}}
\nc{\tra}{{\mathsf{T}}}
\nc{\fsl}{{\mathfrak{sl}}}
\nc{\fso}{{\mathfrak{so}}}
\nc{\fgl}{{\mathfrak{gl}}}
\nc{\Griff}{{\mathop{\mathsf{Griff}}}}
\nc{\AAV}{{\mathcal{AAV}}}
\nc{\Rep}{{\mathsf{Rep}}}
\nc{\Cubics}{{{\mathcal{S}}_3}}
\nc{\VFT}{{{\mathcal{S}}_{14}}}
\nc{\VFTE}{{{\mathcal{N}}_{\mathrm{reg,sm}}}}
\nc{\MX}{{\CM_X}}
\nc{\MY}{{\CM_Y}}
\nc{\MYE}{{\CM_{Y,\CE}}}
\nc{\Yd}{{Y_d}}
\nc{\Yfive}{{Y_5}}
\nc{\Xg}{{X_{2g-2}}}
\nc{\Xtt}{{X_{22}}}
\nc{\Xst}{{X_{16}}}
\nc{\Xtw}{{X_{12}}}
\nc{\Xe}{{X_{8}}}
\nc{\Xf}{{X_{4}}}
\nc{\git}{{/\!\!/\!{}_\chi}}
\nc{\HOH}{{\mathsf H\mathsf H}}
\nc{\NHH}{{\mathsf N\HOH}}
\nc{\HHE}{{\mathsf H\mathsf E}}
\nc{\he}{{\mathrm{h}}}
\nc{\ph}{{\mathrm{ph}}}
\nc{\ac}{{\mathrm{ac}}}
\theoremstyle{plain}
\newtheorem{theorem}{Theorem}[section]
\newtheorem{conjecture}[theorem]{Conjecture}
\newtheorem{lemma}[theorem]{Lemma}
\newtheorem{proposition}[theorem]{Proposition}
\newtheorem{corollary}[theorem]{Corollary}
\theoremstyle{definition}
\newtheorem{definition}[theorem]{Definition}
\newtheorem{exercise}[theorem]{Exercise}
\newtheorem{example}[theorem]{Example}
\theoremstyle{remark}
\newtheorem{remark}[theorem]{Remark}
\title{Derived categories view on rationality problems}
\author{Alexander Kuznetsov}
\dedicatory{Lecture notes for the CIME--CIRM summer school, Levico Terme, June 22--27, 2015}
\address{\sloppy
\parbox{0.9\textwidth}{
Algebraic Geometry Section, Steklov Math Institute,
8 Gubkin str., Moscow 119991 Russia
\hfill\\[5pt]
The Poncelet Laboratory, Independent University of Moscow
\hfill\\[5pt]
Laboratory of Algebraic Geometry, SU-HSE
\hfill
}\bigskip}
\email{akuznet@mi.ras.ru}
\date{}
\thanks{I was partially supported by
by RFBR grants 14-01-00416, 15-01-02164, 15-51-50045 and NSh-2998.2014.1, and
a subsidy granted to the HSE by the Government of the Russian Federation 
for the implementation of the Global Competitiveness Program.}
\begin{document}

\begin{abstract}
We discuss a relation between the structure of derived categories of smooth projective varieties
and their birational properties. We suggest a possible definition of a birational invariant,
the derived category analogue of the intermediate Jacobian, and discuss its possible applications 
to the geometry of prime Fano threefolds and cubic fourfolds.
\end{abstract}

\maketitle





These lectures were prepared for the summer school ``Rationality Problems in Algebraic Geometry'' 
organized by CIME-CIRM in Levico Terme in June~2015.
I would like to thank Rita Pardini and Pietro Pirola (the organizers of the school)
and all the participants for the wonderful atmosphere.

\section{Introduction into derived categories and semiorthogonal decompositions}

Derived categories were defined by Verdier in his thesis~\cite{Ve} back in 60's.
When appeared they were used as an abstract notion to formulate general results, 
like Grothendieck--Riemann--Roch theorem (for which actually they were devised by Grothendieck).
Later on, they were actively used as a technical tool, see e.g.~\cite{Ha}.
The situation changed with appearance of Beilinson's brilliant paper~\cite{Bei},
when they attracted attention as the objects of investigation. Finally, results
of Bondal and Orlov \cite{BO95,BO02} put them on their present place in the center of algebraic geometry.

\subsection{Derived categories}

We refer to~\cite{GM} for a classical treatment of derived and triangulated categories,
and to~\cite{H06} for a more geometrically oriented discussion. Here we restrict ourselves 
to give a very brief introduction into the subject.

Let $\kk$ be a base field.
Recall that a complex over a $\kk$-linear abelian category $\CA$ is a collection of objects $F^i \in \CA$ and morphisms
$d_F^i:F^i \to F^{i+1}$ such that $d_F^{i+1}\circ d_F^i = 0$ for all $i \in \ZZ$. A~complex is bounded if $F^i = 0$ for all $|i| \gg0$.
A~morphism of complexes
$(F^\bullet,d_F^\bullet) \to (G^\bullet,d_G^\bullet)$ is a collection of morphisms $\varphi^i:F^i \to G^i$
in $\CA$ commuting with the differentials: $d_G^i\circ\varphi^i = \varphi^{i+1}\circ d_F^i$ for all $i \in \ZZ$.
The category of bounded complexes in $\CA$ is denoted by $\Com^b(\CA)$.

The $i$-th cohomology of a complex $(F^\bullet,d_F^\bullet)$ is an object of the abelian category $\CA$ defined by
\begin{equation*}
\CH^i(F) = \frac{\Ker(d_F^i:F^i \to F^{i+1})}{\Im(d_F^{i-1}:F^{i-1} \to F^i)}.
\end{equation*}
Clearly, the cohomology is a functor $\CH^i:\Com^b(\CA) \to \CA$. In particular, a morphism of complexes 
$\varphi:F^\bullet \to G^\bullet$ induces a morphism of cohomology objects $\CH^i(\varphi):\CH^i(F^\bullet) \to \CH^i(G^\bullet)$.
A morphism $\varphi:F^\bullet \to G^\bullet$ of complexes is called a {\sf quasiisomorphism} if for all $i \in \ZZ$ 
the morphism $\CH^i(\varphi)$ is an isomorphism.

\begin{definition}
The {\sf bounded derived category} of an abelian category $\CA$ is the localization
\begin{equation*}
\BD^b(\CA) = \Com^b(\CA)[\Qis^{-1}].
\end{equation*}
of the category of bounded complexes in $\CA$ with respect to the class of quasiisomorphisms.
\end{definition}

Of course, this definition itself requires an explanation, which we prefer to skip since there are many textbooks (e.g.~\cite{GM})
describing this in detail. Here we just restrict ourselves by saying that a localization of a category $\CC$
in a class of morphisms $S$ is a category $\CC[S^{-1}]$ with a functor $\CC \to \CC[S^{-1}]$ such that the images
of all morphisms in $S$ under this functor are invertible, and which is the smallest with this property (that is enjoys
a universal property). 

Sometimes it is more convenient to consider the unbounded version of the derived category, but we will not need it.



The cohomology functors descend to the derived category, so
$\CH^i:\BD^b(\CA) \to \CA$
is an additive functor for each $i \in \ZZ$. 
Further, there is a full and faithful embedding functor
\begin{equation*}
\CA \to \BD^b(\CA),
\end{equation*}
taking an object $F \in \CA$ to the complex $\dots \to 0 \to F \to 0 \to \dots$ with zeroes everywhere 
outside of degree zero, in which the object $F$ sits. This complex has only one nontrivial cohomology
which lives in degree zero and equals $F$.


\subsection{Triangulated categories}

The most important structure on the derived category is the triangulated structure.

\begin{definition}
A triangulated category is an additive category $\CT$ equipped with
\begin{itemize}
\item an automorphism of $\CT$ called {\sf the shift functor} and denoted by $[1]:\CT \to \CT$, 
the powers of the shift functor are denoted by $[k]:\CT \to \CT$ for all $k \in \ZZ$;
\item a class 
of chains of morphisms in $\CT$ of the form
\begin{equation}\label{dtdef}
F_1 \xrightarrow{\ \varphi_1\ } F_2 \xrightarrow{\ \varphi_2\ } F_3 \xrightarrow{\ \varphi_3\ } F_1[1]
\end{equation}
called {\sf distinguished triangles},
\end{itemize}
which satisfy a number of axioms (see~\cite{GM}). 
\end{definition}

Instead of listing all of them we will discuss only the most important
axioms and properties.

First, each morphism $F_1 \xrightarrow{\ \varphi_1\ } F_2$ can be extended to a distinguished triangle~\eqref{dtdef}.
The extension is unique up to a noncanonical isomorphism, the third vertex of such a triangle is called {\sf a cone} 
of the morphism $\varphi_1$ and is denoted by $\Cone(\varphi_1)$. 

Further, a triangle~\eqref{dtdef} is distinguished if and only if the triangle
\begin{equation}\label{dtrot}
F_2 \xrightarrow{\ \varphi_2\ } F_3 \xrightarrow{\ \varphi_3\ } F_1[1] \xrightarrow{\ \varphi_1[1]\ } F_2[1]
\end{equation}
is distinguished. Such triangle is referred to as the {\sf rotation} of the original triangle. Clearly, rotating 
a distinguished triangle in both directions, one obtains an infinite chain of morphisms
\begin{equation*}
\dots \xrightarrow{\ \ } F_3[-1] \xrightarrow{\ \varphi_3[-1]\ } 
F_1 \xrightarrow{\ \varphi_1\ } F_2 \xrightarrow{\ \varphi_2\ } F_3 \xrightarrow{\ \varphi_3\ } 
F_1[1] \xrightarrow{\ \varphi_1[1]\ } F_2[1] \xrightarrow{\ \varphi_2[1]\ } F_3[1] \xrightarrow{\ \varphi_3[1]\ } F_1[2] \xrightarrow{\ \ } \dots,
\end{equation*}
called a {\sf helix}. Any consecutive triple of morphisms in a helix is thus a distinguished triangle.

Finally, the sequence of $\kk$-vector spaces 
\begin{equation*}
\dots \xrightarrow{\,\,} \Hom(G,F_3[-1]) \xrightarrow{\,\varphi_3[-1]\,} 
\Hom(G,F_1) \xrightarrow{\,\varphi_1\,} \Hom(G,F_2) \xrightarrow{\,\varphi_2\,} \Hom(G,F_3) \xrightarrow{\,\varphi_3\,} 
\Hom(G,F_1[1])
\xrightarrow{\,\,} \dots,
\end{equation*}
obtained by applying the functor $\Hom(G,-)$ to a helix, is a long exact sequence.
Analogously, the sequence of $\kk$-vector spaces 
\begin{equation*}
\dots \xrightarrow{\,\,} \Hom(F_1[1],G) \xrightarrow{\,\varphi_3\,} 
\Hom(F_3,G) \xrightarrow{\,\varphi_2\,} \Hom(F_2,G) \xrightarrow{\,\varphi_1\,} \Hom(F_1,G) \xrightarrow{\,\varphi_3[-1]\,} 
\Hom(F_3[-1],G)
\xrightarrow{\,\,} \dots,
\end{equation*}
obtained by applying the functor $\Hom(-,G)$ to a helix, is a long exact sequence.

\begin{exercise}
Assume that in a distinguished triangle~\eqref{dtdef} one has $\varphi_3 = 0$. Show that there is an isomorphism
$F_2 \cong F_1 \oplus F_3$ such that $\varphi_1$ is the embedding of the first summand, and $\varphi_2$ is the projection
onto the second summand.
\end{exercise}

\begin{exercise}
Show that a morphism $\varphi_1 : F_1 \to F_2$ is an isomorphism if and only if $\Cone(\varphi_1) = 0$.
\end{exercise}

Derived category $\BD^b(\CA)$ carries a natural triangulated structure.
The shift functor is defined by
\begin{equation}\label{shiftda}
(F[1])^i = F^{i+1},\qquad
d_{F[1]}^i = -d_F^{i+1},\qquad
(\varphi[1])^i = - \varphi^{i+1}.
\end{equation}
The cone of a morphism of complexes $\varphi:F \to G$ is defined by
\begin{equation}\label{coneda}
\Cone(\varphi)^i = G^i \oplus F^{i+1},
\qquad
d_{\Cone(\varphi)}(g^i,f^{i+1}) = (d_G(g^i) + \varphi(f^{i+1}), -d_F(f^{i+1})).
\end{equation} 
A morphism $F \xrightarrow{\ \varphi\ } G$ extends to a triangle by morphisms
\begin{equation*}
\epsilon:G \to \Cone(\varphi),\quad \epsilon(g^i) = (g^i,0),
\qquad
\rho:\Cone(\varphi) \to F[1],\quad \rho(g^i,f^{i+1}) = f^{i+1}.
\end{equation*}
One defines a distinguished triangle in $\BD^b(\CA)$ as a triangle isomorphic to the triangle
\begin{equation*}
F \xrightarrow{\ \varphi\ } G \xrightarrow{\ \epsilon\ } \Cone(\varphi) \xrightarrow{\ \rho\ } F[1]
\end{equation*}
defined above.

\begin{theorem}[\cite{Ve}]
The shift functor and the above class of distinguished triangles provide $\BD^b(\CA)$ with a structure of a triangulated category.
\end{theorem}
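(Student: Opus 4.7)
The plan is to prove the theorem by first introducing an intermediate category, the bounded homotopy category $\mathsf{K}^b(\CA)$, whose objects are bounded complexes but whose morphisms are chain maps modulo chain homotopy. With the same shift functor~\eqref{shiftda} and cone construction~\eqref{coneda}, I declare a triangle in $\mathsf{K}^b(\CA)$ to be distinguished if it is isomorphic (in $\mathsf{K}^b(\CA)$) to a standard mapping cone triangle $F \to G \to \Cone(\varphi) \to F[1]$. The task then splits into two parts: (a) verify the triangulated axioms for $\mathsf{K}^b(\CA)$; (b) descend the structure along the localization functor $\mathsf{K}^b(\CA) \to \BD^b(\CA)$, having first checked that quasi-isomorphisms descend (that is, that homotopic chain maps become equal in the derived category, so $\Com^b(\CA)[\Qis^{-1}]$ and $\mathsf{K}^b(\CA)[\Qis^{-1}]$ agree).

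For part (a), the axioms are checked by explicit computations with the mapping cone. Axiom TR1 is immediate from the formula~\eqref{coneda}: every morphism extends to a distinguished triangle by construction, and the cone of the identity is contractible, hence zero in $\mathsf{K}^b(\CA)$. Axiom TR2 (rotation) is the first real computation: one writes down explicit chain maps between $\Cone(\epsilon \colon G \to \Cone(\varphi))$ and $F[1]$ together with the homotopies witnessing that they are mutually inverse in $\mathsf{K}^b(\CA)$. Axiom TR3 (morphism of triangles) follows from a direct construction of the third map using the columns of the commuting square and the cone differential. Axiom TR4 (octahedron) is the most laborious: given composable $\varphi\colon F\to G$ and $\psi\colon G\to H$, one writes down the natural chain maps $\Cone(\varphi) \to \Cone(\psi\circ\varphi) \to \Cone(\psi)$ and verifies by a bookkeeping argument that the resulting triangle is isomorphic in $\mathsf{K}^b(\CA)$ to a mapping cone triangle.

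For part (b), the key technical point is that the class of quasi-isomorphisms in $\mathsf{K}^b(\CA)$ is a multiplicative (Ore) system compatible with the triangulated structure. The Ore condition is established by taking cones: given a diagram $F \xleftarrow{s} F' \to G$ with $s$ a quasi-isomorphism, completion is constructed via a homotopy pullback built from $\Cone(s)$, which is acyclic by assumption, and the long exact cohomology sequence then shows that the newly constructed arrow is again a quasi-isomorphism. Once the Ore calculus is available, morphisms in $\BD^b(\CA)$ are represented by roofs $F \xleftarrow{s} F' \to G$, and the shift functor as well as the class of distinguished triangles descend unambiguously.

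The hardest step is the interaction between the octahedral axiom and the roof calculus of the localization: one must chase cones of composable roofs through several Ore completions while keeping track of homotopies. The cleanest way to handle this is to prove that every distinguished triangle in $\BD^b(\CA)$ is isomorphic, in $\BD^b(\CA)$, to the image of a strict mapping cone triangle from $\mathsf{K}^b(\CA)$; each axiom in $\BD^b(\CA)$ is then reduced to its counterpart in $\mathsf{K}^b(\CA)$ combined with a routine Ore completion. For the remaining bookkeeping I would refer to the detailed treatment in~\cite{GM} and to Verdier's original manuscript~\cite{Ve}.
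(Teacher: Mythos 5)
The paper does not prove this theorem: it is cited directly from Verdier's thesis \cite{Ve}, with the axioms themselves deferred to \cite{GM}, so there is no argument in the text to compare against. Your outline---first establish the triangulated structure on the homotopy category $\mathsf{K}^b(\CA)$ by explicit mapping-cone computations for TR1--TR4, then check that homotopic maps are identified in $\Com^b(\CA)[\Qis^{-1}]$ so that $\BD^b(\CA) \cong \mathsf{K}^b(\CA)[\Qis^{-1}]$, and finally transport the structure along the Verdier localization after verifying that quasi-isomorphisms form a compatible multiplicative system---is exactly the standard argument found in the cited references, and it is correct in all essentials. One small slip of terminology: the Ore completion of the span $F \xleftarrow{\ s\ } F' \to G$ is a homotopy pushout (built as the cone of the difference map $F' \to F \oplus G$), not a pullback, though the acyclicity argument you give via $\Cone(s)$ and the long exact cohomology sequence is the right one.
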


For $F,G \in \CA$ the spaces of morphisms in the derived category between $F$ and shifts of $G$ are identified with the $\Ext$-groups
in the original abelian category
\begin{equation*}
\Hom(F,G[i]) = \Ext^i(F,G).
\end{equation*}
For arbitrary objects $F,G \in \BD^b(\CA)$ we use the left hand side of the above equality as the definition of the right hand side.
With this convention 
%
the long exact sequences obtained by applying $\Hom$ functors to a helix can be rewritten as
\begin{align*}
\dots \xrightarrow{\,\,} \Ext^{i-1}(G,F_3) \xrightarrow{\,\,} 
\Ext^i(G,F_1) \xrightarrow{\,\,} \Ext^i(G,F_2) \xrightarrow{\,\,} \Ext^i(G,F_3) \xrightarrow{\,\,} 
\Ext^{i+1}(G,F_1) \xrightarrow{\,\,} \dots\\
\dots \xrightarrow{\,\,} \Ext^{i-1}(F_1,G) \xrightarrow{\,\,} 
\Ext^i(F_3,G) \xrightarrow{\,\,} \Ext^i(F_2,G) \xrightarrow{\,\,} \Ext^i(F_1,G) \xrightarrow{\,\,} 
\Ext^{i+1}(F_3,G) \xrightarrow{\,\,} \dots
\end{align*}

The most important triangulated category for the geometry of an algebraic variety $X$ is the bounded derived category 
of coherent sheaves on $X$. To make notation more simple we use the following shorthand
\begin{equation*}
\BD(X) := \BD^b(\coh(X)).
\end{equation*}
Although most of the results we will discuss are valid in a much larger generality,
we restrict for simplicity to the case of smooth projective varieties. Sometimes one also may need
some assumptions on the base field, so let us assume for simplicity that $\kk = \C$.

\subsection{Functors}

A {\sf triangulated functor} between triangulated categories $\CT_1 \to \CT_2$ is a pair $(\Phi,\phi)$,
where
$\Phi:\CT_1 \to \CT_2$ is a $\kk$-linear functor $\CT_1 \to \CT_2$, 
which takes distinguished triangles of $\CT_1$ to distinguished triangles of $\CT_2$, and
$\phi:\Phi\circ[1]_{\CT_1} \to [1]_{\CT_2}\circ\Phi$ is an isomorphism of functors.
Usually the isomorphism $\phi$ will be left implicit.

There is a powerful machinery (see~\cite{GM} for the classical or~\cite{Ke06} for the modern approach) which allows to extend an additive functor between abelian
categories to a triangulated functor between their derived categories. Typically, if the initial functor 
is right exact, one extends it as the left derived functor (by applying the original functor
to a resolution of the object of projective type), and if the initial functor is left exact, one extends
it as the right derived functor (by using a resolution of injective type). We do not stop here 
on this technique. Instead, we list the most important (from the geometric point of view)
triangulated functors between derived categories of coherent sheaves (see~\cite{Ha}, or~\cite{H06} for more details).


\subsubsection{Pullbacks and pushforwards}
Let $f:X \to Y$ be a morphism of (smooth, projective) schemes. It gives an adjoint pair of functors $(f^*,f_*)$, where 
\begin{itemize}
\item $f^*:\coh(Y) \to \coh(X)$ is the pullback functor, and
\item $f_*:\coh(X) \to \coh(Y)$ is the pushforward functor.
\end{itemize}
This adjoint pair induces an adjoint
pair of derived functors $(Lf^*,Rf_*)$ on the derived categories, where
\begin{itemize}
\item $Lf^*:\BD(Y) \to \BD(X)$ is the (left) derived pullback functor, and
\item $Rf_*:\BD(X) \to \BD(Y)$ is the (right) derived pushforward functor.
\end{itemize}
%
%
The cohomology sheaves of the derived pullback/pushforward applied to a coherent sheaf $\CF$ are well known as the classical {\sf higher pullbacks/pushforwards}
\begin{equation*}
L_if^*(\CF) = \CH^{-i}(Lf^*\CF),
\qquad
R^if_*(\CF) = \CH^i(Rf_*\CF).
\end{equation*}
When $f:X \to \Spec(\kk)$ is the structure morphim of a $\kk$-scheme $X$, the pushforward functor $f_*$
is identified with the global sections functor $\Gamma(X,-)$, so that $R^if_* = H^i(X,-)$ and $Rf_* \cong R\Gamma(X,-)$.

\subsubsection{Twisted pullbacks}
The derived pushforward functor $Rf_*$ also has a right adjoint functor
\begin{equation*}
f^!:\BD(Y) \to \BD(X),
\end{equation*}
which is called sometimes {\sf the twisted pullback functor}. The adjunction of $Rf_*$ and $f^!$ is known
as {\sf the Grothendieck duality}.
The twisted pullback $f^!$ has a very simple relation with the derived pullback functor (under our assumption of smoothness and projectivity)
\begin{equation*}
f^!(F) \cong Lf^*(F) \otimes \omega_{X/Y}[\dim X - \dim Y],
\end{equation*}
where 
$\omega_{X/Y} = \omega_X\otimes f^*\omega_Y^{-1}$,
is the relative dualizing sheaf.

\subsubsection{Tensor products and local $\CHom$'s}

Another important adjoint pair of functors on the category $\coh(X)$ of coherent sheaves is $(\otimes,\CHom)$. 
In fact these are bifunctors (each has two arguments), and the adjunction is a functorial isomorphism
\begin{equation*}
\Hom(\CF_1\otimes\CF_2,\CF_3) \cong \Hom(\CF_1,\CHom(\CF_2,\CF_3)).
\end{equation*}
This adjoint pair induces an adjoint pair of derived functors $(\lotimes,\RCHom)$ on the derived categories, where
\begin{itemize}
\item $\lotimes:\BD(X)\times\BD(X) \to \BD(X)$ is the (left) derived tensor product functor, and
\item $\RCHom:\BD(X)^\opp\times\BD(X) \to \BD(X)$ is the (right) derived local $\CHom$ functor.
\end{itemize}
%
%
The cohomology sheaves of these functors applied to a pair of coherent sheaves $\CF,\CG \in \coh(X)$ are the classical $\Tor$'s and $\CExt$'s:
\begin{equation*}
\Tor_i(\CF,\CG) = \CH^{-i}(\CF\lotimes\CG),
\qquad
\CExt^i(\CF,\CG) = \CH^i(\RCHom(\CF,\CG)).
\end{equation*}
One special case of the $\RCHom$ functor is very useful. The object
\begin{equation*}
F^\vee := \RCHom(F,\CO_X)
\end{equation*}
is called the (derived) {\sf dual object}. 
With the smoothness assumption there is a canonical isomorphism
\begin{equation*}
\RCHom(F,G) \cong F^\vee \lotimes G
\end{equation*}
for all $F,G \in \BD(X)$.


\subsection{Relations}

The functors we introduced so far obey a long list of relations. 
Here we discuss the most important of them.

\subsubsection{Functoriality}

Let $X\xrightarrow{\ f\ } Y \xrightarrow{\ g\ } Z$ be a pair of morphisms. Then
\begin{align*}
R(g\circ f)_* & \cong Rg_*\circ Rf_*,\\
L(g\circ f)^* & \cong Lf^* \circ Lg^*,\\
(g\circ f)^! & \cong f^! \circ g^!.
\end{align*}

In particular, if $Z = \Spec\kk$ is the point then the first formula gives an isomorphism $R\Gamma\circ Rf_* \cong R\Gamma$.

\subsubsection{Local adjunctions}

There are isomorphisms
\begin{align*}
Rf_*\RCHom(Lf^*(F),G) & \cong \RCHom(F,Rf_*(G)),\\
Rf_*\RCHom(G,f^!(F))  & \cong \RCHom(Rf_*(G),F).
\end{align*}

If one applies the functor $R\Gamma$ to these formuals, the usual adjunctions are recovered.
Another local adjunction is the following isomorphism
\begin{equation*}
\RCHom(F\lotimes G,H) \cong \RCHom(F,\RCHom(G,H)).
\end{equation*}

\subsubsection{Tensor products and pullbacks}

Derived tensor product is associative and commutative, thus $\BD(X)$ is a tensor (symmetric monoidal) category.
The pullback functor is a tensor functor, i.e.\ 
\begin{align*}
Lf^*(F\lotimes G) & \cong Lf^*(F) \lotimes Lf^*(G),\\
Lf^*\RCHom(F,G) & \cong \RCHom(Lf^*(F),Lf^*(G)).
\end{align*}

\subsubsection{The projection formula}

In a contrast with the pullback, the pushforward is not a tensor functor. It has, however, a weaker property
\begin{equation*}
Rf_*(Lf^*F \lotimes G) \cong F \lotimes Rf_*(G),
\end{equation*}
which is called {\sf projection formula} and is very useful. A particular case of it is the following isomorphism
\begin{equation*}
Rf_*(Lf^*F) \cong F \lotimes Rf_*(\CO_X).
\end{equation*}

\subsubsection{Base change}

Let $f:X \to S$ and $u:T \to S$ be morphisms of schemes. Consider the fiber product $X_T := X\times_S T$ and the fiber square
\begin{equation}\label{bcsq}
\vcenter{\xymatrix{
X_T \ar[r]^{u_X} \ar[d]_{f_T} & X \ar[d]^f \\
T \ar[r]^u & S
}}
\end{equation}
Using adjunctions and functoriality of pullbacks and pushforwards, it is easy to construct a canonical morphism of functors $Lu^*\circ Rf_* \to Rf_{T*}\circ Lu_X^*$.
The base change theorem says that it is an isomorphism under appropriate conditions. To formulate these we need the following


\begin{definition}
A pair of morphisms $f:X \to S$ and $u:T \to S$ is called {\sf $\Tor$-independent} if for all points 
$x \in X$, $t\in T$ such that $f(x) = s = u(t)$ one has
\begin{equation*}
\Tor_i^{\CO_{S,s}}(\CO_{X,x},\CO_{T,t}) = 0
\qquad\text{for $i > 0$}.
\end{equation*}
\end{definition}

\begin{remark}
If either $f$ or $u$ is flat then the square is $\Tor$-independent. Furthermore, 
when $X$, $S$, and $T$ are all smooth there is a simple sufficient condition for a pair $(f,u)$ to be $\Tor$-independent:
\begin{equation*}
\dim X_T = \dim X + \dim T - \dim S,
\end{equation*}
i.e.\ the equality of the dimension of $X_T$ and of its expected dimension.
\end{remark}


\begin{theorem}[\cite{K06a}]
The base change morphism $Lu^*\circ Rf_* \to Rf_{T*}\circ Lu_X^*$ is an isomorphism
if and only if the pair of morphisms $f:X \to S$ and $u:T \to S$ is $\Tor$-independent.
\end{theorem}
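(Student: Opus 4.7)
The approach is to reduce to a purely module-theoretic statement on affine charts and then exploit associativity of the derived tensor product.

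First I would make the question local. It is clearly local on $S$, so I assume $S = \Spec A$. Then $T$ and $X$ admit affine covers over $S$; the functors $Lu^*$, $Rf_*$, $Lu_X^*$, $Rf_{T*}$ all commute with restriction to opens (this is flat base change along open immersions, which is a triviality), and Tor-independence at a pair $(x,t)$ depends only on the local rings, so it localizes as well. Using a Čech cover of $X$ by opens affine over $S$ and computing $Rf_*$ by the associated Čech complex, I reduce to the case $X = \Spec C$, $T = \Spec B$, so that $X_T = \Spec(C \otimes_A B)$.

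Next I translate the functors into module language. For a complex $M$ of $C$-modules, $Rf_* M$ is simply $M$ regarded as a complex of $A$-modules (restriction of scalars is exact), so $Lu^* Rf_* M \simeq B \lotimes_A M$. On the other side, $Lu_X^* M \simeq M \lotimes_C (C \otimes_A B)$ and $Rf_{T*}$ is again a restriction of scalars, so $Rf_{T*} Lu_X^* M \simeq M \lotimes_C (C \otimes_A B)$ as a complex of $B$-modules. Under these identifications the base change morphism becomes the canonical map
\begin{equation*}
B \lotimes_A M \;\longrightarrow\; M \lotimes_C (C \otimes_A B).
\end{equation*}

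For the ``if'' direction, Tor-independence in the affine setting is equivalent --- because $\Tor$ commutes with localization --- to $\Tor^A_i(C,B) = 0$ for all $i > 0$, i.e.\ to the assertion that $C \otimes_A B$ already computes the derived tensor product $C \lotimes_A B$. Granted this, associativity and commutativity of $\lotimes$ yield
\begin{equation*}
M \lotimes_C (C \otimes_A B) \;\simeq\; M \lotimes_C (C \lotimes_A B) \;\simeq\; M \lotimes_A B \;\simeq\; B \lotimes_A M,
\end{equation*}
and one verifies that this chain of isomorphisms is precisely the base change map. For the converse, substitute $M = C$: the right side collapses to $C \otimes_A B$, and the assumption that base change is an isomorphism gives $B \lotimes_A C \simeq C \otimes_A B$, forcing $\Tor^A_i(C,B) = 0$ for $i > 0$; localizing at primes $\mathfrak{p} \subset C$, $\mathfrak{q} \subset B$ with common contraction $\mathfrak{r} \subset A$ recovers the pointwise Tor-independence.

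The principal subtlety is the opening step: one must carefully verify that the base change morphism is compatible with restriction to open subsets of $S$, $T$, $X$ and with the Čech resolution used to reduce $X$ to an affine. Once those compatibilities are in hand (they amount to flat base change along open immersions, which is immediate), the remainder of the argument is a routine manipulation of derived tensor products of modules.
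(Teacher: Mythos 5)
The paper does not give a proof of this theorem; it is stated as a citation of~\cite{K06a}, so there is no in-paper argument to compare against. Your proposal is a standard and correct route: localize down to $S=\Spec A$, $T=\Spec B$, $X=\Spec C$, identify the two sides of the base change morphism with $B\lotimes_A M$ and $M\lotimes_C(C\otimes_A B)$, then use associativity of $\lotimes$ for the ``if'' direction and the test object $\CO_X$ (i.e.\ $M=C$) for the ``only if'' direction. You correctly identify the real work as the reduction step --- checking that the base change morphism is compatible with restriction to opens and with the \v{C}ech filtration used to pass from $X$ to an affine; in the separated Noetherian (indeed smooth projective) context of the paper this is unproblematic. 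One refinement worth making in the ``if'' direction: rather than asserting that the displayed chain of natural isomorphisms agrees with the base change map (true, but mildly finicky to verify), it is cleaner to fix a bounded-above complex $P^\bullet$ of free $C$-modules resolving $M$. Then $Rf_{T*}Lu_X^*M$ is computed by the underived $B\otimes_A P^\bullet$, $Lu^*Rf_*M$ is $B\lotimes_A M$, and the base change morphism is visibly the canonical map $B\lotimes_A M \to B\otimes_A P^\bullet$, which is a quasi-isomorphism exactly when each $P^i$ (a free $C$-module, hence a direct sum of copies of $C$) is $\Tor^A$-independent of $B$, i.e.\ exactly when $\Tor^A_{>0}(C,B)=0$. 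Combined with your correct observation that this last vanishing, by localization, matches the pointwise $\Tor$-independence condition in the definition, this closes the argument.
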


%

\subsection{Fourier--Mukai functors}

Let $X$ and $Y$ be algebraic varieties and $K \in \BD(X\times Y)$ an object
of the derived category of the product. Given this data we define a functor
\begin{equation*}
\Phi_K:\BD(X) \to \BD(Y),
\qquad
F \mapsto Rp_{Y*}(K \lotimes Lp_X^*(F)),
\end{equation*}
where $p_X:X\times Y \to  X$ and $p_Y:X \times Y \to Y$ are the projections.
It is called the {\sf Fourier--Mukai functor} (another names are 
the {\sf kernel functor}, the {\sf integral functor}) with kernel $K$.

Fourier--Mukai functors form a nice class of functors, which includes most of the functors
we considered before. This class is closed under compositions and adjunctions.

\begin{exercise}\label{fmpf}
Let $f:X \to Y$ be a morphism. Let $\gamma_f:X \to X\times Y$ be the graph of $f$. 
Show that the Fourier--Mukai functor with kernel $\gamma_{f*}\CO_X \in \BD(X\times Y)$ is isomorphic 
to the derived pushforward $Rf_*$.
\end{exercise}

\begin{exercise}\label{fmpb}
Let $g:Y \to X$ be a morphism. Let $\gamma_g:Y \to X\times Y$ be the graph of $g$. 
Show that the Fourier--Mukai functor with kernel $\gamma_{g*}\CO_Y \in \BD(X\times Y)$ is isomorphic 
to the derived pullback $Lg^*$.
\end{exercise}

\begin{exercise}\label{fmtp}
Let $\CE \in \BD(X)$ be an object. Let $\delta:X \to X\times X$ be the diagonal embedding. 
Show that the Fourier--Mukai functor with kernel $R\delta_*\CE \in \BD(X\times X)$ is isomorphic 
to the derived tensor product functor $\CE\lotimes -$.
\end{exercise}

\begin{exercise}
Let $K_{12} \in \BD(X_1\times X_2)$ and $K_{23} \in \BD(X_2\times X_3)$.
Consider the triple product $X_1\times X_2\times X_3$ and the projections $p_{ij}:X_1\times X_2\times X_3 \to X_i\times X_j$.
Show an isomorphism of functors
\begin{equation*}
\Phi_{K_{23}} \circ \Phi_{K_{12}} \cong \Phi_{K_{12}\circ K_{23}},
\end{equation*}
where $K_{12}\circ K_{23}$ is the {\sf convolution of kernels}, defined by
\begin{equation*}
K_{12}\circ K_{23} := Rp_{13*}(Lp_{12}^*K_{12} \lotimes Lp_{23}^*K_{23}).
\end{equation*}
\end{exercise}

Adjoint functors of Fourier--Mukai functors are also Fourier--Mukai functors.


\begin{lemma}
The right adjoint functor of $\Phi_K$ is the Fourier--Mukai functor
\begin{equation*}
\Phi_K^! \cong \Phi_{K^\vee\lotimes\omega_X[\dim X]} : \BD(Y) \to \BD(X).
\end{equation*}
The left adjoint functor of $\Phi_K$ is the Fourier--Mukai functor
\begin{equation*}
\Phi_K^* \cong \Phi_{K^\vee\lotimes\omega_Y[\dim Y]}  : \BD(Y) \to \BD(X).
\end{equation*}
\end{lemma}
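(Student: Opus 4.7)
The plan is to establish the formula for the right adjoint by chaining the standard adjunctions in the category $\BD(X\times Y)$, and then to obtain the left adjoint by a dual computation using the twisted pullback on the opposite projection. All smoothness inputs listed in the preceding subsections (projection formula, Grothendieck duality, and the identity $\RCHom(F,G)\cong F^\vee\lotimes G$) are used.

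For the right adjoint, I would compute directly, using $(Rp_{Y*}, p_Y^!)$ (Grothendieck duality), the tensor-hom adjunction, and $(Lp_X^*, Rp_{X*})$:
\begin{align*}
\Hom_Y(\Phi_K F, G)
&\cong \Hom_{X\times Y}(K\lotimes Lp_X^*F,\, p_Y^!G) \\
&\cong \Hom_{X\times Y}(Lp_X^*F,\, \RCHom(K, p_Y^!G)) \\
&\cong \Hom_X(F,\, Rp_{X*}\RCHom(K, p_Y^!G)).
\end{align*}
This identifies $\Phi_K^!(G) \cong Rp_{X*}\RCHom(K, p_Y^!G)$. To get the stated kernel, I compute the relative dualizing sheaf of $p_Y$: by Künneth, $\omega_{X\times Y} \cong p_X^*\omega_X \otimes p_Y^*\omega_Y$, so $\omega_{(X\times Y)/Y} = p_X^*\omega_X$ and $\dim(X\times Y)-\dim Y = \dim X$. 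The formula $p_Y^! G \cong Lp_Y^*G \otimes p_X^*\omega_X[\dim X]$ together with smoothness $\RCHom(K, p_Y^!G) \cong K^\vee \lotimes p_Y^!G$ then gives $\Phi_K^!(G) \cong Rp_{X*}\bigl(K^\vee \lotimes p_X^*\omega_X[\dim X] \lotimes Lp_Y^*G\bigr)$, which is the Fourier--Mukai functor with the kernel stated (with the standard abuse of writing $\omega_X$ for $p_X^*\omega_X$).

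For the left adjoint the computation is formally dual. I would begin with $\Hom_Y(G, \Phi_K F)$, apply $(Lp_Y^*, Rp_{Y*})$, use smoothness to rewrite $K\lotimes Lp_X^*F \cong \RCHom(K^\vee, Lp_X^*F)$, and then apply tensor-hom to obtain $\Hom_{X\times Y}(Lp_Y^*G \lotimes K^\vee, Lp_X^*F)$. Now the roles of the two projections swap: expressing $Lp_X^*F \cong p_X^!F \otimes p_Y^*\omega_Y^{-1}[-\dim Y]$ via the twisted-pullback formula and applying $(Rp_{X*}, p_X^!)$ yields $\Phi_K^*(G)\cong Rp_{X*}\bigl(K^\vee \lotimes p_Y^*\omega_Y[\dim Y]\lotimes Lp_Y^*G\bigr)$, matching the claim. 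The only delicate bookkeeping is keeping track of which projection's relative dualizing sheaf appears: the right adjoint picks up $\omega_X$ because Grothendieck duality is applied to $p_Y$, while the left adjoint picks up $\omega_Y$ because it is applied to $p_X$. Apart from this, every step is a routine invocation of an adjunction already quoted in the chapter, so no serious obstacle is expected.
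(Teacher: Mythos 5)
Your proof is correct. The argument for the right adjoint is essentially the paper's: your chain of Hom-isomorphisms is the explicit unpacking of the slogan that the right adjoint of a composition is the reverse composition of right adjoints, followed by the same expansion of $p_Y^!$ via Grothendieck duality and the identity $\RCHom(K,-)\cong K^\vee\lotimes-$.

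For the left adjoint you take a genuinely different route. You redo the computation directly, applying $(Lp_Y^*,Rp_{Y*})$, then tensor-hom, then the Grothendieck duality formula for $p_X^!$ rather than $p_Y^!$. The paper instead recycles the first part: setting $K' = K^\vee\lotimes\omega_Y[\dim Y]$ and applying the right-adjoint formula to the functor $\Phi_{K'}\colon\BD(Y)\to\BD(X)$ (so with the roles of $X$ and $Y$ swapped), one finds that the right adjoint of $\Phi_{K'}$ has kernel $(K')^\vee\lotimes\omega_Y[\dim Y]\cong K$, i.e.\ $\Phi_{K'}$ has right adjoint $\Phi_K$; uniqueness of adjoints then forces $\Phi_{K'}$ to be the left adjoint of $\Phi_K$. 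The paper's trick is slightly shorter because it avoids rewriting the dual chain of adjunctions; your direct computation is a bit longer but has the advantage of making entirely explicit which projection's relative dualizing sheaf enters at which step, and it does not rely on the symmetry observation about swapping $X$ and $Y$. Both arguments are complete and correct.
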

\begin{proof}
The functor $\Phi_K$ is the composition of the derived pullback $Lp_X^*$ , the derived tensor product with~$K$, and the derived pushforward $Rp_{Y*}$.
Therefore its right adjoint functor is the composition of their right adjoint functors, i.e.\ 
of the twisted pullback functor $p_Y^!$, the derived tensor product with $K^\vee$, and the derived pushforward functor $Rp_{X*}$.
By Grothendieck duality we have $p_Y^!(G) \cong Lp_Y^*(G) \lotimes \omega_X[\dim X]$. Altogether, this gives the first formula.
%
For the second part note that if $K' = K^\vee\lotimes\omega_Y[\dim Y]$ then the functor $\Phi_{K'}:\BD(Y) \to \BD(X)$
has a right adjoint, which by the first part of the Lemma coincides with~$\Phi_K$. Hence the left adjoint of $\Phi_K$ is $\Phi_{K'}$.
\end{proof}

\subsection{Serre functor}

The notion of a Serre functor is a categorical interpretation of the Serre duality.

\begin{definition}
A {\sf Serre functor} in a triangulated category $\CT$ is an autoequivalence $\SS_\CT:\CT \to \CT$
with a bifunctorial isomorphism
\begin{equation*}
\Hom(F,G)^\vee \cong \Hom(G,\SS_\CT(F)).
\end{equation*}
\end{definition}
It is easy to show (see~\cite{BK}) that if a Serre functor exists, it is unique up to a canonical isomorphism.
When $\CT = \BD(X)$, the Serre functor is given by a simple formula
\begin{equation*}
\SS_{\BD(X)}(F) = F \otimes \omega_X[\dim X].
\end{equation*}
The bifunctorial isomorphism in its definition is the Serre duality for $X$.

If $X$ is a Calabi--Yau variety (i.e.\ $\omega_X \cong \CO_X$) then the corresponding Serre functor $\SS_{\BD(X)} \cong [\dim X]$ is just a shift.
This motivates the following 

\begin{definition}[\cite{K15}]\label{definition-cy-cat}
A triangulated category $\CT$ is a {\sf Calabi--Yau category of dimension $n \in \ZZ$} if $\SS_\CT \cong [n]$.
A~triangulated category $\CT$ is a {\sf fractional Calabi--Yau category of dimension $p/q \in \QQ$} if $\SS_\CT^q \cong [p]$.
\end{definition}

Of course, $\BD(X)$ cannot be a fractional Calabi--Yau category with a non-integer Calabi--Yau dimension.
However, we will see soon some natural and geometrically meaningful fractional Calabi--Yau categories. 

\subsection{Hochschild homology and cohomology}

Hochschild homology and cohomology of algebras are well-known and important invariants.
One can define them for triangulated categories as well (under some technical assumptions).
A non-rigorous definition is the following
\begin{equation*}
\HOH^\bullet(\CT) = \Ext^\bullet(\id_\CT,\id_\CT),
\qquad
\HOH_\bullet(\CT) = \Ext^\bullet(\id_\CT,\SS_\CT).
\end{equation*}
Thus Hochschild cohomology $\HOH^\bullet(\CT)$ is the self-$\Ext$-algebra of the identity functor of $\CT$, and 
Hochschild homology $\HOH_\bullet(\CT)$ is the $\Ext$-space from the identity functor to the Serre functor. 
For a more rigorous definition one should choose a DG-enhancement of the triangulated
category $\CT$, replace the identity and the Serre functor by appropriate bimodules,
and compute $\Ext$-spaces in the derived category of DG bimodules.
In a geometrical situation (i.e., when $\CT = \BD(X)$ with $X$ smooth and projective)
one can replace the identity functor by the structure sheaf of the diagonal and the Serre
functor by the diagonal pushforward of $\omega_X[\dim X]$ (i.e.~Fourier--Mukai kernel
of the Serre functor) and compute $\Ext$-spaces in the derived category of coherent 
sheaves on the square $X \times X$ of the variety.

When $\CT$ is geometric, Hochschild homology and cohomology have an interpretation in terms 
of standard geometrical invariants.

\begin{theorem}[Hochschild--Kostant--Rosenberg]\label{theorem-hkr}
If $\CT = \BD(X)$ with $X$ smooth and projective, then
\begin{equation*}
\HOH^k(\CT) = \bigoplus_{q+p = k} H^q(X,\Lambda^pT_X),
\qquad
\HOH_k(\CT) = \bigoplus_{q-p = k} H^q(X,\Omega^p_X) = \bigoplus_{q-p = k} H^{p,q}(X).
\end{equation*}
\end{theorem}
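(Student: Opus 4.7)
The plan is to reduce the definition of Hochschild (co)homology to an $\Ext$-computation on $X \times X$, then invoke the sheaf-level HKR isomorphism for the derived self-intersection of the diagonal, and finally unwind both sides.

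First I would use the Fourier--Mukai description of the identity and Serre functors. As noted before the statement, the identity functor of $\BD(X)$ is the Fourier--Mukai transform with kernel $\delta_*\CO_X$ (this is Exercise \ref{fmtp} applied to $\CE = \CO_X$), and the Serre functor is the Fourier--Mukai transform with kernel $\delta_*(\omega_X[n])$, where $\delta:X \to X\times X$ is the diagonal and $n = \dim X$. Accepting the non-rigorous definition given in the text, or its rigorous DG version, as equivalent to the corresponding $\Ext$-groups on $X\times X$, one obtains
\begin{equation*}
\HOH^\bullet(\BD(X)) \;\cong\; \Ext^\bullet_{X\times X}(\delta_*\CO_X,\delta_*\CO_X),
\qquad
\HOH_\bullet(\BD(X)) \;\cong\; \Ext^\bullet_{X\times X}(\delta_*\CO_X,\delta_*(\omega_X[n])).
\end{equation*}

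Next I would apply the adjunction $(L\delta^*,R\delta_*)$ to move the computation to $X$ itself:
\begin{equation*}
\Ext^\bullet_{X\times X}(\delta_*\CO_X,\delta_*\CF) \;\cong\; \Ext^\bullet_X(L\delta^*\delta_*\CO_X,\CF).
\end{equation*}
The key geometric input is then the (sheaf-level) Hochschild--Kostant--Rosenberg isomorphism
\begin{equation*}
L\delta^*\delta_*\CO_X \;\cong\; \bigoplus_{p=0}^{n}\,\Omega^p_X[p],
\end{equation*}
valid because $X$ is smooth and $\kk = \C$ has characteristic zero. The cohomology sheaves of $L\delta^*\delta_*\CO_X$ are the $\Tor$-sheaves of the structure sheaves of the two copies of $\delta(X)$ inside $X\times X$, which a local Koszul-resolution calculation identifies with $\Omega^{-i}_X$; the nontrivial content is the formality statement that this complex splits into the direct sum of its shifted cohomologies, which holds in characteristic zero by the Kapranov/Swan/Yekutieli-type arguments (or, in the cubic case $n \le 3$, by elementary degree reasons). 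This formality step is the main obstacle.

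Finally I would plug the HKR splitting into the adjoint formulas. For Hochschild cohomology, since each $\Omega^p_X$ is locally free, local-to-global $\Ext$ collapses and
\begin{equation*}
\Ext^k_X\Bigl(\bigoplus_p \Omega^p_X[p],\CO_X\Bigr) = \bigoplus_p \Ext^{k-p}_X(\Omega^p_X,\CO_X) = \bigoplus_{p+q=k} H^q(X,\Lambda^p T_X),
\end{equation*}
which is the first formula. For Hochschild homology the same manipulation yields
\begin{equation*}
\Ext^k_X\Bigl(\bigoplus_p \Omega^p_X[p],\omega_X[n]\Bigr) = \bigoplus_p H^{k-p+n}(X,\Lambda^p T_X\otimes\omega_X) = \bigoplus_p H^{k-p+n}(X,\Omega^{n-p}_X),
\end{equation*}
where the last identification uses the canonical isomorphism $\Lambda^p T_X\otimes\omega_X \cong \Omega^{n-p}_X$ coming from the perfect wedge pairing. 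Reindexing by $p' := n-p$ and $q := k-p+n$ gives $q - p' = k$, so the sum equals $\bigoplus_{q-p=k}H^q(X,\Omega^p_X)$, which is the second formula.
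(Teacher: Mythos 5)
The paper states this as a cited result and gives no proof, so there is no internal argument to compare against; the text preceding the theorem only sets up the translation ``Hochschild (co)homology of $\BD(X)$ equals $\Ext$ on $X \times X$ with kernels $\delta_*\CO_X$ and $\delta_*(\omega_X[\dim X])$'', which is exactly where your argument starts. Your proof is the standard one and is correct: the $(L\delta^*,\delta_*)$ adjunction (valid since $\delta$ is a closed embedding, so $R\delta_* = \delta_*$), the sheaf-level HKR splitting $L\delta^*\delta_*\CO_X \cong \bigoplus_{p}\Omega^p_X[p]$ in characteristic zero, and the final bookkeeping via $\Lambda^p T_X \otimes \omega_X \cong \Omega^{n-p}_X$ together with the substitution $(p,q)\mapsto(n-p,\,n+k-p)$ all check out. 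You also correctly isolate where the real content lies, namely that $L\delta^*\delta_*\CO_X$ \emph{splits} as the direct sum of its shifted cohomology sheaves, not merely that those cohomology sheaves are $\Omega^p_X$ (the latter is a local Koszul computation, the former is global input). The one remark I would push back on is the parenthetical claim that for $n \le 3$ the splitting follows ``by elementary degree reasons'': already for $n = 2$ the complex has cohomology in three consecutive degrees and the relevant extension classes do not vanish for positional reasons alone, so the appeal to the general formality theorem (exponential of the Atiyah class, after Kapranov, Markarian, Swan, Yekutieli) is genuinely needed for all $n \ge 2$. That side remark, however, does not affect the correctness of the argument.
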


Thus the Hochschild cohomology is the cohomology of polyvector fields, while the Hochschild homology
is the cohomology of differential forms, or equivalently, the Hodge cohomology of $X$
with one grading lost.

\begin{example}\label{example-hoh-dim-0-1}
If $\CT = \BD(\Spec(\kk))$ then $\HOH^\bullet(\CT) = \HOH_\bullet(\CT) = \kk$.
If $\CT = \BD(C)$ with $C$ being a smooth projective curve of genus $g$, then
\begin{equation*}
\HOH^\bullet(\CT) = 
\begin{cases}
\kk \oplus \fsl_2[-1], & \text{if $g = 0$},\\
\kk \oplus \kk^2[-1] \oplus \kk[-2], & \text{if $g = 1$},\\
\kk \oplus \kk^g[-1] \oplus \kk^{3g-3}[-2], & \text{if $g \ge 2$},
\end{cases}
\qquad\text{and}\qquad
\HOH_\bullet(\CT) = \kk^g[1] \oplus \kk^2 \oplus \kk^g[-1].
\end{equation*}
\end{example}

Hochschild cohomology is a graded algebra (in fact, it is what is called
a Gerstenhaber algebra, having both an associative multiplication and a Lie bracket of degree $-1$), 
and Hochschild homology is a graded module over it. When the category $\CT$ is a Calabi--Yau category, it is a free module.

\begin{lemma}\label{lemma-hoh-cy}
If $\CT$ is a Calabi--Yau category of dimension $n \in \ZZ$ then 
\begin{equation*}
\HOH_\bullet(\CT) \cong \HOH^\bullet(\CT)[n].
\end{equation*}
\end{lemma}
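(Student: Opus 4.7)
The plan is essentially a direct substitution. By Definition~\ref{definition-cy-cat}, the Calabi--Yau hypothesis provides an isomorphism of endofunctors $\SS_\CT \cong [n]$. Plugging this into the definition of Hochschild homology gives
$$\HOH_\bullet(\CT) = \Ext^\bullet(\id_\CT, \SS_\CT) \cong \Ext^\bullet(\id_\CT, \id_\CT[n]) \cong \HOH^{\bullet+n}(\CT),$$
which is exactly $\HOH^\bullet(\CT)[n]$ with the cohomological shift convention $(V^\bullet[n])^k = V^{k+n}$ used throughout the paper.

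Making this rigorous requires the DG-enhancement framework already indicated in the paragraph defining Hochschild (co)homology before Theorem~\ref{theorem-hkr}: I would replace $\id_\CT$ and $\SS_\CT$ by their Fourier--Mukai kernels (for $\CT = \BD(X)$, by $\CO_\Delta$ and by the Serre bimodule $\delta_*\omega_X[\dim X]$ respectively), and interpret the $\Ext$ spaces in the derived category of DG bimodules. In that setting the Calabi--Yau isomorphism lifts to an isomorphism of the corresponding bimodules, after which the chain of isomorphisms displayed above is literally the shift-of-grading identity for $\Ext$ under an $[n]$-shift of the second argument.

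The remaining ingredient, implicit in the paper's sentence that $\HOH_\bullet(\CT)$ is a \emph{free module} over $\HOH^\bullet(\CT)$, is compatibility with the module structure. This structure is Yoneda composition: a class $\alpha\in\Ext^i(\id_\CT,\id_\CT)$ acts on $\beta\in\Ext^j(\id_\CT,\SS_\CT)$ by $\id_\CT \xrightarrow{\alpha} \id_\CT[i] \xrightarrow{\beta[i]} \SS_\CT[i+j]$. Under the identification $\SS_\CT \cong [n]$ this composition is exactly the Yoneda product on $\HOH^\bullet(\CT)$ (shifted by $n$), so the isomorphism constructed above is automatically $\HOH^\bullet(\CT)$-linear and exhibits $\HOH_\bullet(\CT)$ as the free rank-one module $\HOH^\bullet(\CT)[n]$, a canonical generator being the class in $\HOH_{-n}(\CT) = \Ext^{-n}(\id_\CT,\SS_\CT)$ corresponding to the given Calabi--Yau isomorphism $\id_\CT[n] \cong \SS_\CT$ itself.

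There is no substantive obstacle here; the only point requiring care is promoting the functorial isomorphism $\SS_\CT \cong [n]$ to one at the level of bimodules, which is formal once a DG-enhancement is fixed. The lemma is therefore a direct consequence of the definitions plus the bimodule model of the identity and Serre functors.
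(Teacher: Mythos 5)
Your proof is correct and takes essentially the same route as the paper: substitute $\SS_\CT \cong \id_\CT[n]$ into $\HOH_\bullet(\CT) = \Ext^\bullet(\id_\CT,\SS_\CT)$, with the caveat (which the paper also flags) that a rigorous version requires lifting this to an isomorphism of bimodules in a DG-enhancement. Your additional paragraph on $\HOH^\bullet$-module compatibility goes slightly beyond what the paper proves here, but is consistent with the ``free module'' remark preceding the lemma.
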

\begin{proof}
By definition of a Calabi--Yau category we have $\SS_\CT = \id_\CT[n]$. Substituting this into
the definition of Hochschild homology we get the result. 
\end{proof}

For a more accurate proof one should do the same with DG bimodules or with sheaves on $X\times X$, see~\cite{K15} for details.

\section{Semiorthogonal decompositions}

A semiorthogonal decomposition is a way to split a triangulated category into smaller pieces.

\subsection{Two-step decompositions}

We start with the following simplified notion.

\begin{definition}
A (two-step) {\sf semiorthogonal decomposition} ({\sf s.o.d.}\ for short) of a triangulated category~$\CT$ is a pair
of full triangulated subcategories $\CA,\CB \subset \CT$ such that
\begin{itemize}
\item $\Hom(B,A) = 0$ for any $A \in \CA$, $B \in \CB$;
\item for any $T \in \CT$ there is a distinguished triangle
\begin{equation}\label{equation-sod-triangle}
T_\CB \to T \to T_\CA \to T_\CB[1]
\end{equation}
with $T_\CA \in \CA$ and $T_\CB \in \CB$.
\end{itemize}
\end{definition}

An s.o.d.\ is denoted by $\CT = \langle \CA, \CB \rangle$.
Before giving an example let us make some observations.

\begin{lemma}
If $\CT = \langle \CA, \CB \rangle$ is an s.o.d, then for any $T \in \CT$ the triangle~\eqref{equation-sod-triangle}
is unique and functorial in $T$. In partcular, the association $T \mapsto T_\CA$ is a functor $\CT \to \CA$, 
left adjoint to the embeding functor $\CA \to \CT$, and the association $T \mapsto T_\CB$ is a functor $\CT \to \CB$, 
right adjoint to the embeding functor $\CB \to \CT$.
\end{lemma}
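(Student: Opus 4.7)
The plan is to deduce everything from one simple observation: semiorthogonality kills the boundary terms in the long exact sequences obtained by applying $\Hom$ to the triangle \eqref{equation-sod-triangle}.

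First I would establish the adjunction isomorphism, since this is the cleanest step and it implies the rest. For $B \in \CB$ and $T \in \CT$, apply the contravariant functor $\Hom(B,-)$ to the triangle $T_\CB \to T \to T_\CA \to T_\CB[1]$ to get the long exact sequence
\begin{equation*}
\Hom(B,T_\CA[-1]) \to \Hom(B,T_\CB) \to \Hom(B,T) \to \Hom(B,T_\CA).
\end{equation*}
Since $T_\CA \in \CA$ and $B \in \CB$, the semiorthogonality condition $\Hom(B,A) = 0$ applied to $A = T_\CA[-1]$ and $A = T_\CA$ kills both outer terms, yielding a natural isomorphism $\Hom(B,T_\CB) \xrightarrow{\sim} \Hom(B,T)$. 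The dual argument, applying $\Hom(-,A)$ for $A \in \CA$ to the same triangle, yields $\Hom(T,A) \xrightarrow{\sim} \Hom(T_\CA,A)$.

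Next I would deduce functoriality and uniqueness. Given a morphism $\varphi:T\to T'$ with decomposition triangle $T'_\CB \to T' \to T'_\CA$, I compose to get $T_\CB \to T \xrightarrow{\varphi} T'$; by the isomorphism $\Hom(T_\CB,T'_\CB) \cong \Hom(T_\CB,T')$ established above, this lifts uniquely to a morphism $\varphi_\CB:T_\CB \to T'_\CB$. Uniqueness of the lift gives functoriality in $T$ automatically, and applied to $\varphi = \id_T$ comparing two different choices of decomposition triangle yields an isomorphism $T_\CB \xrightarrow{\sim} T'_\CB$; combined with the analogous map on the $\CA$-side and the five-lemma-type axiom for triangulated categories (any morphism between two sides of a distinguished triangle extends, and if the two given morphisms are isomorphisms so is the third), one obtains an isomorphism of the two triangles. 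This uniqueness then upgrades the adjunction isomorphism to a natural one in both arguments, so $T \mapsto T_\CB$ really defines a right adjoint functor to the inclusion $\CB \hookrightarrow \CT$, and symmetrically $T \mapsto T_\CA$ is a left adjoint to $\CA \hookrightarrow \CT$.

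I expect no serious obstacle; the only mildly delicate point is the five-lemma step for uniqueness of the decomposition triangle up to unique isomorphism, since the extension of a morphism of triangles from two vertices to the third is not canonically unique in a general triangulated category. However, the semiorthogonality condition forces the extension to be unique here (the relevant $\Hom$-group between $\CB$ and $\CA$-objects vanishes), which removes the ambiguity and makes the whole construction strictly functorial rather than merely pseudo-functorial.
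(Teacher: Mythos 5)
Your proof is correct and follows essentially the same approach as the paper: both arguments derive the key $\Hom$-isomorphisms from semiorthogonality via the long exact sequences, then read off uniqueness, functoriality, and adjunction. The only cosmetic difference is that you work primarily on the $\CB$-side (lifting through $T_\CB$) while the paper factors through $T_\CA$, and your appeal to the triangulated-category axiom for extending morphisms of triangles is a slight detour, since the direct $\Hom$-isomorphism already forces the isomorphism of decomposition triangles without it.
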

\begin{proof}
Let $T,T' \in \CT$ and $\varphi \in \Hom(T,T')$. Let~\eqref{equation-sod-triangle} and
\begin{equation*}
T'_\CB \to T' \to T'_\CA \to T'_\CB[1]
\end{equation*}
be the corresponding decomposition triangles. Consider the long exact sequence obtained by applying the functor $\Hom(-,T'_\CA)$ to~\eqref{equation-sod-triangle}:
\begin{equation*}
\dots \to \Hom(T_\CB[1],T'_\CA) \to \Hom(T_\CA,T'_\CA) \to \Hom(T,T'_\CA) \to \Hom(T_\CB,T'_\CA) \to \dots
\end{equation*}
By semiorthogonality the left and the right terms are zero. Hence $\Hom(T_\CA,T'_\CA) \cong \Hom(T,T'_\CA)$.
This means that the composition $T \xrightarrow{\ \varphi\ } T' \to T'_\CA$ factors in a unique way as a composition
$T \to T_\CA \to T'_\CA$. Denoting the obtained morphism $T_\CA \to T'_\CA$ by $\varphi_\CA$, the uniqueness implies
the functoriality of $T \mapsto T_\CA$. Furthermore, taking an arbitrary object $A \in \CA$ and applying the functor 
$\Hom(-,A)$ to~\eqref{equation-sod-triangle} and using again the semiorthogonality, we deduce an isomorphism
$\Hom(T_\CA,A) \cong \Hom(T,A)$, which means that the functor $T \mapsto T_\CA$ is left adjoint to the embedding $\CA \to \CT$.
The functoriality of $T \mapsto T_\CB$ and its adjunction are proved analogously.
\end{proof}

Note also that the composition of the embedding $\CA \to \CT$ with the projection $\CT \to \CA$ is the identity.

In fact, the above construction can be reversed.

\begin{lemma}\label{lemma-admissible-sod}
Assume $\alpha:\CA \to \CT$ is a triangulated functor, which has a left adjoint $\alpha^*:\CT \to \CA$ and $\alpha^*\circ\alpha \cong \id_\CA$
{\rm(}such $\CA$ is called {\sf left admissible}{\rm)}. Then $\alpha$ is full and faithful and there is an s.o.d.\ 
\begin{equation*}
\CT = \langle \alpha(\CA), \Ker\alpha^* \rangle. 
\end{equation*}
Analogously, if $\beta:\CB \to \CT$ is a triangulated functor, which has a right adjoint $\beta^!:\CT \to \CB$ and $\beta^!\circ\beta \cong \id_\CB$
{\rm(}such $\CB$ is called {\sf right admissible}{\rm)}, then $\beta$ is full and faithful and there is an s.o.d.\ 
\begin{equation*}
\CT = \langle \Ker\beta^!, \beta(\CB) \rangle.
\end{equation*}
\end{lemma}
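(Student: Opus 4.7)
The plan is to prove the left-admissible assertion directly and deduce the right-admissible one by formal duality (pass to opposite categories, where a left adjoint becomes a right adjoint and s.o.d.\ order is reversed).

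First I will establish that $\alpha$ is full and faithful. For $A, A' \in \CA$, the adjunction $\alpha^* \dashv \alpha$ gives a functorial isomorphism
\[
\Hom_\CT(\alpha(A), \alpha(A')) \cong \Hom_\CA(\alpha^*\alpha(A), A'),
\]
and the hypothesis $\alpha^*\alpha \cong \id_\CA$ identifies the right-hand side with $\Hom_\CA(A, A')$. For this composite to be literally the map induced by the functor $\alpha$ itself, one interprets the hypothesis in the standard way, namely as the assertion that the counit $\varepsilon: \alpha^*\alpha \to \id_\CA$ of the adjunction is an isomorphism.

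Next I set $\CK := \Ker\alpha^*$ and verify the two conditions in the definition of an s.o.d.\ $\CT = \langle \alpha(\CA), \CK \rangle$. Semiorthogonality is a one-line calculation: for $K \in \CK$ and $A \in \CA$,
\[
\Hom_\CT(K, \alpha(A)) \cong \Hom_\CA(\alpha^*(K), A) = \Hom_\CA(0, A) = 0.
\]
For the decomposition triangle, given $T \in \CT$ I take the unit $\eta_T: T \to \alpha\alpha^*(T)$ and complete it to a distinguished triangle
\[
T_\CK \to T \xrightarrow{\ \eta_T\ } \alpha\alpha^*(T) \to T_\CK[1].
\]
By construction $\alpha\alpha^*(T) \in \alpha(\CA)$, so the only remaining task is to verify that $\alpha^*(T_\CK) = 0$. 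Apply the triangulated functor $\alpha^*$ to the triangle; the middle arrow becomes $\alpha^*(\eta_T)$, which is an isomorphism by the triangle identity $\varepsilon_{\alpha^*(T)} \circ \alpha^*(\eta_T) = \id_{\alpha^*(T)}$ together with the assumption that $\varepsilon$ is an isomorphism. Hence $\alpha^*(T_\CK)$ is the fiber of an isomorphism and therefore vanishes, so $T_\CK \in \CK$.

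The main subtlety is in the first step: the bare hypothesis $\alpha^*\alpha \cong \id_\CA$ must be promoted to the statement that the counit of the adjunction is an isomorphism, because that compatibility is exactly what powers the triangle identity used at the end. Once this is granted, everything else reduces to standard triangulated manipulations. The right-admissible case is entirely parallel: one uses the counit $\varepsilon_T: \beta\beta^!(T) \to T$ to build the decomposition triangle, applies $\beta^!$, and invokes the dual triangle identity $\beta^!(\varepsilon_T) \circ \eta_{\beta^!(T)} = \id$ together with $\beta^!\beta \cong \id_\CB$ to conclude.
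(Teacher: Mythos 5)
Your proof is correct and follows essentially the same route as the paper: both extend the unit $\eta_T : T \to \alpha\alpha^*(T)$ to a distinguished triangle, apply $\alpha^*$, and use the triangle identity $\varepsilon_{\alpha^*(T)}\circ\alpha^*(\eta_T)=\id$ together with the hypothesis to show that $\alpha^*$ kills the third vertex. You make two points explicit that the paper leaves tacit — that $\alpha$ is full and faithful (which the lemma asserts but the paper's proof does not spell out), and that the hypothesis $\alpha^*\alpha\cong\id_\CA$ must be read as saying the counit of the adjunction is an isomorphism, since the triangle identity is what ties the abstract isomorphism to the map coming from the triangle — both are appropriate clarifications rather than departures.
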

\begin{proof}
If $B \in \Ker\alpha^*$ then by adjunction $\Hom(B,\alpha(A)) = \Hom(\alpha^*(B),A) = 0$. Further, for any $T \in \CT$ consider the 
unit of adjunction $T \to \alpha\alpha^*(T)$ and extend it to a distinguished triangle
\begin{equation*}
T' \to T \to \alpha\alpha^*(T) \to T'[1].
\end{equation*}
Applying $\alpha^*$ we get a distinguished triangle
\begin{equation*}
\alpha^*T' \to \alpha^*T \to \alpha^*\alpha\alpha^*(T) \to \alpha^*T'[1].
\end{equation*}
If we show that the middle map is an isomorphism, it would follow that $\alpha^*T' = 0$, hence $T' \in \Ker\alpha^*$.
For this consider the composition $\alpha^*T \to \alpha^*\alpha\alpha^*(T) \to \alpha^*T$, where the first map is the morphism
from the triangle (it is induced by the unit of the adjunction), and the second map is induced by the counit of the adjunction.
The composition of these maps is an isomorphism
by one of the definitions of adjunction. Moreover, the second morphism is an isomorphism by the condition of the Lemma. 
Hence the first morphism is also an isomorphism. As we noted above, this implies that $T' \in \Ker\alpha^*$, and so 
the above triangle is a decomposition triangle for $T$. This proves the first semiorthogonal decomposition.
The second statement is proved analogously.
\end{proof}


\begin{example}\label{example-sod-ox}
Let $X$ be a $\kk$-scheme with the structure morphism $\pi_X:X \to \Spec(\kk)$. If $H^\bullet(X,\CO_X) = \kk$ 
then there is a semiorthogonal decomposition
\begin{equation*}
\BD(X) = \langle \Ker R\pi_{X*}, L\pi_X^*(\BD(\Spec(\kk)) \rangle.
\end{equation*}
Indeed, the functor $R\pi_{X*}$ is right adjoint to $L\pi_X^*$ and by projection formula
\begin{equation*}
R\pi_{X*}(L\pi_X^*(F)) \cong F \lotimes R\pi_{X*}(\CO_X) = F \lotimes H^\bullet(X,\CO_X) = F \lotimes \kk = F.
\end{equation*}
Therefore, the functor $L\pi_X^*$ is fully faithful and right admissible, so the second semiorthogonal decomposition of Lemma~\ref{lemma-admissible-sod} applies.
\end{example}

\begin{exercise}\label{exercise-sod-ox-2}
Show that there is also a semiorthogonal decomposition
\begin{equation*}
\BD(X) = \langle  L\pi_X^*(\BD(\Spec(\kk)), \Ker (R\pi_{X*}\circ\SS_X) \rangle.
\end{equation*}
\end{exercise}

Note that $\BD(\Spec(\kk))$ is the derived category of $\kk$-vector spaces (so we will denote it simply by~$\BD(\kk)$), 
its objects can be thought of just as graded vector spaces, and the functor $L\pi_X^*$ applied to 
a graded vector space $V^\bullet$ is just $V^\bullet\otimes\CO_X$ (a complex of trivial vector bundles with zero differentials).
Moreover, $R\pi_{X*}(-) \cong \Ext^\bullet(\CO_X,-)$, so its kernel is the orthogonal subcategory
\begin{equation*}
\CO_X^\perp = \{ F \in \BD(X) \mid \Ext^\bullet(\CO_X,F) = 0 \}.
\end{equation*}
The decompositions of Example~\ref{example-sod-ox} and Exercise~\ref{exercise-sod-ox-2} thus can be rewritten as
\begin{equation*}
\BD(X) = \langle \CO_X^\perp, \CO_X \rangle 
\quad\text{and}\quad
\BD(X) = \langle \CO_X, {}^\perp\CO_X \rangle, 
\end{equation*}
where we write just $\CO_X$ instead of $\CO_X \otimes \BD(\kk)$.

\begin{example}\label{example-sod-e}
Let $E$ be an object in $\BD(X)$ such that $\Ext^\bullet(E,E) = \kk$ (so-called {\sf exceptional object}).
Then there are semiorthogonal decompositions
\begin{equation*}
\BD(X) = \langle E^\perp, E \rangle
\quad\text{and}\quad
\BD(X) = \langle E, {}^\perp E \rangle, 
\end{equation*}
where again we write $E$ instead of $E \otimes \BD(\kk)$.
\end{example}

\subsection{General semiorthogonal decompositions} 

The construction of Lemma~\ref{lemma-admissible-sod} can be iterated to produce a longer (multi-step) semiorthogonal decomposition.

\begin{definition}
A {\sf semiorthogonal decomposition} of a triangulated category $\CT$ is a sequence $\CA_1,\dots,\CA_m$
of its full triangulated subcategories, such that
\begin{enumerate}
\item $\Hom(\CA_i,\CA_j) = 0$ for $i > j$, and
\item for any $T \in \CT$ there is a chain of morphisms $0 = T_m \to T_{m-1} \to \dots \to T_1 \to T_0 = T$
such that $\Cone(T_i \to T_{i-1}) \in \CA_i$ for each $1 \le i \le m$.
\end{enumerate}
The notation for a semiorthogonal decomposition is $\CT = \langle \CA_1,\CA_2,\dots,\CA_m \rangle$.
\end{definition}

\begin{lemma}\label{lemma-sod-from-ec}
Assume $E_1,\dots,E_n$ is a sequence of exceptional objects such that $\Ext^\bullet(E_i,E_j) = 0$ for $i > j$
\rm{(}this is called an {\sf exceptional collection}{\rm)}. Then
\begin{equation*}
\BD(X) = \langle E_1^\perp \cap \dots \cap E_n^\perp, E_1, \dots, E_n \rangle
\end{equation*}
is a semiorthogonal decomposition.
\end{lemma}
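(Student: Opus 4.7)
My plan is to argue by induction on $n$, bootstrapping from the single-object decomposition of Example~\ref{example-sod-e} at each step.

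The base case $n = 1$ is exactly the second decomposition in Example~\ref{example-sod-e}. For the inductive step, I specialize the hypothesis to $i = n$ to get $\Ext^\bullet(E_n, E_j) = 0$ for all $j < n$, so each such $E_j$ lies in $\CB := E_n^\perp \subset \BD(X)$. Since $\CB$ is a full triangulated subcategory, all $\Hom$-spaces in $\CB$ agree with those in $\BD(X)$; in particular, $E_1, \dots, E_{n-1}$ remains an exceptional collection when regarded inside $\CB$. Applying the inductive hypothesis in $\CB$ gives
\begin{equation*}
\CB = \langle E_1^{\perp_\CB} \cap \cdots \cap E_{n-1}^{\perp_\CB},\ E_1, \dots, E_{n-1} \rangle,
\end{equation*}
and for the same reason the intersection of orthogonals taken inside $\CB$ coincides with $E_1^\perp \cap \cdots \cap E_{n-1}^\perp \cap E_n^\perp$ computed in $\BD(X)$. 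Combined with the decomposition $\BD(X) = \langle \CB, E_n \rangle$ from Example~\ref{example-sod-e}, this yields the claimed list of subcategories.

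The only nontrivial point I expect is the general fact that nested semiorthogonal decompositions can be spliced: given a two-step SOD $\CT = \langle \CC, \CD \rangle$ and a further decomposition $\CC = \langle \CA_1, \dots, \CA_{n-1} \rangle$, the combined sequence $\langle \CA_1, \dots, \CA_{n-1}, \CD \rangle$ is again an SOD of $\CT$. Semiorthogonality of the expanded list is immediate from the inclusions $\CA_i \subset \CC$ together with the two original semiorthogonalities. To produce the filtration required by the definition, I would first apply the outer SOD to write $T \in \CT$ via a triangle $T_\CC \to T \to T_\CD$ with $T_\CC \in \CC$ and $T_\CD \in \CD$, and then splice in the inner filtration of $T_\CC$ coming from the SOD of $\CC$. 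This elementary splicing is the main technical step; everything else in the lemma is a direct invocation of Example~\ref{example-sod-e} and the inductive hypothesis.
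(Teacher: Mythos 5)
Your proposal is correct and follows essentially the same route as the paper's proof: peel off $E_n$ via Example~\ref{example-sod-e}, observe that $E_1,\dots,E_{n-1}$ lie in $E_n^\perp$, and repeat inside $E_n^\perp$. The paper simply states this iteration informally, whereas you package it as an induction and explicitly verify the splicing of nested semiorthogonal decompositions, which is a reasonable way to make the argument airtight.
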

\begin{proof}
Write the s.o.d.\ of Example~\ref{example-sod-e} for $E_n \in \BD(X)$. Then note that $E_1, \dots, E_{n-1} \in E_n^\perp$.
Then write analogous s.o.d for the exceptional object $E_{n-1}$ in the triangulated category $E_n^\perp$ and repeat until the required s.o.d.\ is obtained.
\end{proof}

\begin{remark}
If the first component $E_1^\perp \cap \dots \cap E_n^\perp$ of the above decomposition is zero, 
one says that $E_1,\dots,E_n$ is a {\sf full exceptional collection}.
\end{remark}

\begin{remark}\label{remark-mutations}
Whenever a semiorthogonal decomposition $\CT = \langle \CA_1, \dots, \CA_m \rangle$ is given, 
one can construct many other semiorthogonal decompositions by operations called ``mutations''.
Roughly speaking, to mutate a semiorthogonal decomposition we omit one of its components $\CA_i$
and then insert a new component in a different position. The new component is abstractly equivalent
to $\CA_i$, but is embedded differently into $\CT$. Mutations induce an action of the braid group
on the set of all semiorthogonal decompositions of a given triangulated category.
\end{remark}

\subsection{Semiorthogonal decompositions for Fano varieties}

Most interesting examples of semiorthogonal decompositions come from Fano varieties. Recall that a {\sf Fano variety} is 
a smooth projective connected variety~$X$ with ample anticanonical class $-K_X$.
A Fano variety $X$ is {\sf prime} if $\Pic(X) \cong \ZZ$. The {\sf index} of a prime Fano variety $X$ 
is the maximal integer $r$ such that $-K_X = rH$ for some $H \in \Pic(X)$. 

\begin{example}\label{example-fano-index-r}
Let $X$ be a Fano variety of index $r$ with $-K_X = rH$. Then the collection of line bundles 
$(\CO_X((1-r)H), \dots, \CO_X(-H),\CO_X)$ is an exceptional collection.
Indeed, for $i > -r$ we have 
\begin{equation*}
H^{>0}(X,\CO_X(iH)) = H^{>0}(X,K_X((i+r)H)) = 0
\end{equation*}
by Kodaira vanishing. Moreover, $H^0(X,\CO_X(iH)) = 0$ for $i < 0$ by ampleness of $H$ and $H^0(X,\CO_X) = \kk$ by connectedness of $X$.
Applying Lemma~\ref{lemma-sod-from-ec} we obtain a semiorthogonal decomposition
\begin{equation*}
\BD(X) = \langle \CA_X, \CO_X((1-r)H), \dots, \CO_X(-H), \CO_X \rangle.
\end{equation*}
with $\CA_X$ being the orthogonal complement of the sequence.
\end{example}

In some cases, the orthogonal complement $\CA_X$ appearing in the above semiorthogonal decomposition vanishes or can be explicitly described.

\begin{example}\label{example-pn}
The projective space $\PP^n$ is a Fano variety of index $r = n + 1$. The orthogonal complement of the maximal exceptional
sequence of line bundles vanishes and we have a full exceptional collection
\begin{equation*}
\BD(\PP^n) = \langle \CO(-n),\dots,\CO(-1),\CO \rangle. 
\end{equation*}
\end{example}

\begin{example}\label{example-qn}
A smooth quadric $Q^n \subset \PP^{n+1}$ is a Fano variety of index $r = n$. The orthogonal complement of the maximal exceptional
sequence of line bundles does not vanish but can be explicitly described. In fact, there is a semiorthogonal decomposition
\begin{equation*}
\BD(Q^n) = \langle \BD(\Cl_0),\CO(1-n),\dots,\CO(-1),\CO \rangle, 
\end{equation*}
where $\Cl_0$ is the even part of the corresponding Clifford algebra.
If the field $\kk$ is algebraically closed of characteristic $0$ then $\Cl_0$ is Morita equivalent to $\kk$, if $n$ is odd, or 
to $\kk \times \kk$, if $n$ is even. So in this case the category $\BD(\Cl_0)$ is generated by one or two (completely orthogonal) 
exceptional objects, which in terms of $\BD(Q^n)$ are given by the spinor bundles.
\end{example}

In fact, the semiorthogonal decomposition of Example~\ref{example-qn} is much more general.
It is valid for arbitrary base fields (of odd characteristic) and also for non-smooth quadrics.
There is also an analog for flat families of quadrics over nontrivial base schemes (see~\cite{K08}).

If $X$ is a Fano variety of dimension 2 (i.e.\ a del Pezzo surface) and the base field is $\C$,
then $\BD(X)$ has a full exceptional collection. This follows easily from representation of $X$
as a blow up of $\PP^2$ in several points and Orlov's blowup formula (see Theorem~\ref{theorem-sod-blowup} below).
For more general fields the situation is more complicated.


\subsection{Fano 3-folds}\label{ssection-fano3}

The situation becomes much more interesting when one goes into dimension 3. Fano threefolds were completely
classified in works of Fano, Iskovskih, Mori and Mukai. There are 105 deformation families of those (quite a large number!),
so we restrict our attention to prime Fano 3-folds. These form only 17 families.

The index of Fano 3-folds is bounded by $4$. The only Fano 3-fold of index $4$ is the projective space~$\PP^3$,
and the only Fano 3-fold of index $3$ is the quadric $Q^3$, their derived categories are described in Examples~\ref{example-pn} and~\ref{example-qn}.
The structure of derived categories of prime Fano 3-folds of index 1 and 2 is summarized in the following table
(see \cite{O91,BO95,K96,K04,K05,K06a,K09a,K14,K15} for details).


\begin{table}[h]\label{table-fano}
\caption{Derived categories of prime Fano 3-folds}
\begin{tabular}{|l|l|l|l|l|}
\hline
\raisebox{-2ex}{\rule{0pt}{4.5ex}}
Index 1 & Index 2 & Relation & Serre functor & $\cydim$ \\
\hline
\raisebox{-2ex}{\rule{0pt}{4.5ex}}
$\BD(X_{22}) = \langle E_4,E_3,E_2,\CO \rangle$ & 
$\BD(X_{5}) = \langle E_2(-H),\CO(-H),E_2,\CO \rangle$ 
&&& \\ \hline
\raisebox{-2ex}{\rule{0pt}{4.5ex}}
$\BD(X_{18}) = \langle \BD(C_2),E_2,\CO \rangle$ & 
$\BD(V_{2,2}) = \langle \BD(C_2),\CO(-H),\CO \rangle$ 
&&& \\ \hline
\raisebox{-2ex}{\rule{0pt}{4.5ex}}
$\BD(X_{16}) = \langle \BD(C_3),E_3,\CO \rangle$ & 
&&& \\ \hline
\raisebox{-2ex}{\rule{0pt}{4.5ex}}
$\BD(X_{14}) = \langle \CA_{X_{14}},E_2,\CO \rangle$ & 
$\BD(V_{3}) = \langle \CA_{V_3},\CO(-H),\CO \rangle$ &
$\CA_{X_{14}} \cong \CA_{V_3}$ & $\SS^3 \cong [5]$ & $1\frac23$ 
\\ \hline
\raisebox{-2ex}{\rule{0pt}{4.5ex}}
$\BD(X_{12}) = \langle \BD(C_7),E_5,\CO \rangle$ & 
&&& \\ \hline
\raisebox{-2ex}{\rule{0pt}{4.5ex}}
$\BD(X_{10}) = \langle \CA_{X_{10}},E_2,\CO \rangle$ & 
$\BD(dS_{4}) = \langle \CA_{dS_4},\CO(-H),\CO \rangle$ &
$\CA_{X_{10}} \sim \CA_{dS_4}$ & $\SS^2 \cong [4]$ & $2$ \\ \hline
\raisebox{-2ex}{\rule{0pt}{4.5ex}}
$\BD(V_{2,2,2}) = \langle \CA_{V_{2,2,2}},\CO \rangle$ & 
&&& \\ \hline
\raisebox{-2ex}{\rule{0pt}{4.5ex}}
$\BD(V_{2,3}) = \langle \CA_{V_{2,3}},E_2,\CO \rangle$ & 
$\BD(V_{6}^{1,1,1,2,3}) = \langle \CA_{V_{6}},\CO(-H),\CO \rangle$ &
$\CA_{V_{2,3}} \sim \CA_{V_{6}}$ & $\SS^3 \cong [7]$ & $2\frac13$ \\ \hline
\raisebox{-2ex}{\rule{0pt}{4.5ex}}
$\BD(V_{4}) = \langle \CA_{V_{4}},\CO \rangle$ & 
&& $\SS_{\CA_{V_4}}^4 \cong [10]$ & $2\frac12$ \\ \hline
\raisebox{-2ex}{\rule{0pt}{4.5ex}}
$\BD(dS_{6}) = \langle \CA_{dS_{6}},\CO \rangle$ & 
&& $\SS_{\CA_{dS_6}}^6 \cong [16]$ & $2\frac23$ \\ \hline
\end{tabular}
\end{table}

Here $V_{d_1,\dots,d_r}^{w_0,\dots,w_{r+3}}$ denotes a smooth complete intersection of multidegree $(d_1,\dots,d_r)$ 
in a weighted projective space with weights $(w_0,\dots,w_{r+3})$ (and if all weights are equal to 1 we omit them).
The notation $X_d$ means a smooth Fano 3-fold of degree $d$. Further, $dS_d$ stands for the degree $d$ double solid,
i.e., the double covering of $\PP^3$ branched in a smooth divisor of degree $d$. 

\bigskip 

As we discussed in Example~\ref{example-fano-index-r} the structure sheaf $\CO_X$ on a Fano 3-fold $X$ is always exceptional,
and in case of index 2 (the second column of the table) it extends to an exceptional pair $(\CO_X(-H),\CO_X)$ (where as usual
$H$ denotes the ample generator of the Picard group). Whenever there is an additional exceptional vector bundle of rank $r$,
we denote it by $E_r$ (see one of the above references for the construction of these). For Fano 3-folds $X_{22}$ and $X_5$
(the first line of the table) one can construct in this way a full exceptional collection.

For other prime Fano 3-folds there is no full exceptional collection (this follows from Corollary~\ref{corollary-fec-hoh} below),
so the derived category contains an additional component. In four cases (varieties $X_{18}$, $X_{16}$, $X_{12}$, and $V_{2,2}$)
this component can be identified with the derived category of a curve (of genus $2$, $3$, and $7$). For the other Fano 3-folds,
the extra component $\CA_X$ cannot be described in a simple way. However, it has some interesting properties. For example,
in most cases it is a fractional Calabi--Yau category (as defined in Definition~\ref{definition-cy-cat}, see~\cite{K15} for the proofs). 
We list in the fourth column of the table the CY property of the Serre functor of these categories, and in the fifth column their CY dimension.

\bigskip

It is a funny and mysterious observation that the nontrivial components of Fano 3-folds sitting in the same row of the table
have very much in common. In fact they are equivalent for rows 1, 2, and 4, and are expected to be deformation equivalent
for rows 6 and 8 (see~\cite{K09a} for precise results and conjectures). We mention this relation in the third column
of the table, by using the sign ``$\cong$'' for equivalence and the sign ``$\sim$'' for deformation equivalence 
of categories.

\bigskip


It is interesting to compare this table with the table in Section 2.3 of~\cite{Bea15}, where the known information
about birational properties of prime Fano 3-folds is collected (the notation we use agrees with the notation in {\em loc.\ cit.}).
From the comparison it is easy to see that the structure of the derived categories of Fano 3-folds correlates
with their rationality properties. All Fano 3-folds which are known to be rational ($X_{22}$, $X_{18}$, $X_{16}$, $X_{12}$, $X_5$, and $V_{2,2}$)
have semiorthogonal decompositions consisting only of exceptional objects (derived categories of points)
and derived categories of curves. On a contrary, those Fano 3-folds which are known or expected to be nonrational,
have some ``nontrivial pieces'' in their derived categories. 
Note also that typically these ``nontrivial pieces'' are fractional Calabi--Yau categories.

In the next section we will try to develop this observation into a birational invariant.


\begin{remark}
Among the most interesting examples of nonrational varieties are the Artin--Mumofrd double solids (see~\cite{AM}).
If $X$ is one of these double solids and $\TX$ is its blowup then there is a semiorthogonal decomposition
\begin{equation*}
\BD(\TX) = \langle \CA_\TX, \CO_\TX(-H), \CO_\TX \rangle.
\end{equation*}
The category $\CA_\TX$ is equivalent to the derived category of the associated Enriques surface (see~\cite{IK,HT}). 
This explains the torsion in the cohomology of $X$ which implies its non-rationality.
\end{remark}

\subsection{Hochschild homology and semiorthogonal decompositions}

A fundamental property of Hoch\-schild homology, which makes it very useful, is its additivity
with respect to semiorthogonal decompositions.

\begin{theorem}[\cite{K09b}]\label{theorem-hoh-additivity}
If $\CT = \langle \CA_1,\dots,\CA_m \rangle$ is a semiorthogonal decomposition, then
\begin{equation*}
\HOH_\bullet(\CT) = \bigoplus_{i=1}^m \HOH_\bullet(\CA_i).
\end{equation*}
\end{theorem}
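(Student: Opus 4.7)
My plan is to work at the level of DG-enhancements throughout, since $\HOH_\bullet$ is only genuinely functorial at that level. Fix a DG-enhancement of $\CT$; each admissible $\CA_i$ inherits a compatible enhancement, and the embedding $\alpha_i\colon \CA_i\hookrightarrow\CT$ together with its left and right adjoints $\alpha_i^*$, $\alpha_i^!$ lift to DG-quasifunctors represented by bimodules. With this set-up Hochschild homology becomes
\begin{equation*}
\HOH_\bullet(\CT) = \RHom_{\CT^\opp\otimes\CT}(I_\CT, \SS_\CT),
\end{equation*}
where $I_\CT$ is the diagonal bimodule and $\SS_\CT$ is the bimodule representing the Serre functor. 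Induction on $m$ reduces the problem to the two-step case, so I now assume $\CT = \langle\CA,\CB\rangle$.

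The key step is to promote the functorial decomposition triangle~\eqref{equation-sod-triangle} to a distinguished triangle of $\CT$-bimodules
\begin{equation*}
P_\CB \longrightarrow I_\CT \longrightarrow P_\CA \longrightarrow P_\CB[1],
\end{equation*}
where $P_\CA$ and $P_\CB$ are the bimodules representing the projection endofunctors $\alpha_\CA\alpha_\CA^*$ and $\alpha_\CB\alpha_\CB^!$. Applying $\RHom(-,\SS_\CT)$ yields a distinguished triangle with $\HOH_\bullet(\CT)$ in the middle. The outer vertices I would compute using the convolution factorization $P_\CA \cong \alpha_\CA\lotimes_\CA \alpha_\CA^*$ and the bimodule Hom--tensor adjunction:
\begin{equation*}
\RHom_{\CT^\opp\otimes\CT}(P_\CA,\SS_\CT) \cong \RHom_{\CA^\opp\otimes\CA}(I_\CA, \alpha_\CA^! \SS_\CT \alpha_\CA) \cong \HOH_\bullet(\CA),
\end{equation*}
the last isomorphism following from the intrinsic identity $\alpha_\CA^!\SS_\CT\alpha_\CA \cong \SS_\CA$, itself an elementary consequence of the defining property of a Serre functor and of $\alpha_\CA$ being fully faithful. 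An entirely analogous computation yields $\HOH_\bullet(\CB)$ for the $\CB$-vertex.

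The main obstacle is to show that the resulting distinguished triangle of Hochschild homologies splits as a direct sum rather than producing a nontrivial long exact sequence. I would establish this by passing to the induced semiorthogonal decomposition of the bimodule category $\BD(\CT^\opp\otimes\CT)$ coming from the sod of $\CT$ and of $\CT^\opp$: semiorthogonality makes $P_\CA$ and $P_\CB$ supported in the diagonal blocks $\CA^\opp\otimes\CA$ and $\CB^\opp\otimes\CB$ respectively, while the extension class of the bimodule triangle lies in the off-diagonal block $\CA^\opp\otimes\CB$. One then verifies that this off-diagonal class is annihilated after twisting by $\SS_\CT$ and applying $\RHom(P_\CA,-)$, i.e.\ that $\RHom_{\CT^\opp\otimes\CT}(P_\CA, P_\CB[1]) = 0$ after absorbing the Serre twist and invoking block semiorthogonality. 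This forces the connecting map to vanish and delivers the desired direct sum; iterating back up through the reduction completes the proof for general $m$.
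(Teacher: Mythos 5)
The overall strategy of your proof — work with a DG-enhancement, reduce to a two-step decomposition, build the bimodule triangle $P_\CB\to I_\CT\to P_\CA$ out of the projection functors, identify the outer vertices of $\RHom(-,\SS_\CT)$ with $\HOH_\bullet(\CA)$ and $\HOH_\bullet(\CB)$ via the factorizations $P_\CA\cong\alpha_\CA\lotimes_\CA\alpha_\CA^*$ and the identity $\alpha_\CA^!\SS_\CT\alpha_\CA\cong\SS_\CA$ — is sound and is essentially the route taken in the cited reference. Steps 1 through 4 are fine (modulo making explicit that $\CT$ must have a Serre functor, so that each $\CA_i$ is admissible on both sides and $\alpha_i^!$ exists at all).

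The splitting argument in step 5 has a genuine gap. You assert that semiorthogonality places $P_\CA$ in the block $\CA^\opp\otimes\CA$ and $P_\CB$ in $\CB^\opp\otimes\CB$ of the induced decomposition of $\BD(\CT^\opp\otimes\CT)$. This is false: $P_\CA=\alpha_\CA\lotimes_\CA\alpha_\CA^*$ has $\alpha_\CA^*$ defined on all of $\CT$, so $P_\CA$ only lands in $\CT^\opp\otimes\CA$, not in the diagonal block. The same goes for $P_\CB$. (A concrete check: for $\CT=\BD(\PP^1)$ with $\CA=\langle\CO\rangle$, $\CB=\langle\CO(1)\rangle$, one computes $P_\CA\cong\CO(-2)\boxtimes\CO[1]$ and $P_\CB\cong\CO(-1)\boxtimes\CO(1)$, and the first tensor factor of each is in neither $\CA$ nor $\CB$.) Worse, if your claim were true, block-orthogonality of $\CA^\opp\otimes\CA$ and $\CB^\opp\otimes\CB$ would force $\RHom(P_\CA,P_\CB[1])=0$ outright, the bimodule triangle would split, and $\CT$ would be the completely orthogonal sum $\CA\oplus\CB$ — contradicting the fact that a general semiorthogonal decomposition is not orthogonal. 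So the bimodule triangle does \emph{not} split; what must be shown is that the connecting map degenerates only \emph{after} applying $\RHom(-,\SS_\CT)$. The vanishing that actually does the job is
\begin{equation*}
\RHom_{\CT^\opp\otimes\CT}\bigl(P_\CA,\ \SS_\CT\lotimes_\CT P_\CB\bigr)=0,
\end{equation*}
with the Serre bimodule genuinely inserted (and the side of insertion matters). This follows by pulling the factorizations $P_\CA=\alpha\lotimes_\CA\alpha^*$ and $P_\CB=\beta\lotimes_\CB\beta^!$ through the adjunctions and using $\Hom_\CT(A,\SS_\CT B)\cong\Hom_\CT(B,A)^\vee=0$ for $A\in\CA$, $B\in\CB$; one then combines it with the fact that the induced map $\RHom(P_\CB,\SS_\CT\lotimes P_\CB)\to\RHom(P_\CB,\SS_\CT)$ is an isomorphism to conclude that the connecting homomorphism factors through zero. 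Your write-up conflates this with a vanishing of $\RHom(P_\CA,P_\CB[1])$ ``after absorbing the Serre twist,'' and the block-support justification you offer for it does not hold.
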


One of the nice consequences of this result is the following necessary condition for a category 
to have a full exceptional collection.

\begin{corollary}\label{corollary-fec-hoh}
If a category $\CT$ has a full exceptional collection then $\HOH_k(\CT) = 0$ for $k \ne 0$
and $\dim \HOH_0(\CT) < \infty$. Moreover, the length of any full exceptional collection
in $\CT$ equals $\dim \HOH_0(\CT)$.

In particular, if $X$ is a smooth projective variety and $\BD(X)$ has a full exceptional
collection, then $H^{p,q}(X) = 0$ for $p \ne q$, and the length of the exceptional collection
equals to $\sum \dim H^{p,p}(X)$.
\end{corollary}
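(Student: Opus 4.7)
The strategy is to combine the additivity of Hochschild homology (Theorem \ref{theorem-hoh-additivity}) with a direct computation of $\HOH_\bullet$ for the one-object subcategory spanned by an exceptional object. First, applying Lemma \ref{lemma-sod-from-ec} to a full exceptional collection $E_1,\dots,E_n$ produces a semiorthogonal decomposition
\[
\CT = \langle \langle E_1\rangle,\, \dots,\, \langle E_n\rangle \rangle,
\]
where $\langle E_i\rangle$ denotes the admissible subcategory generated by $E_i$ (the component $E_i \otimes \BD(\kk)$ in the notation of Example \ref{example-sod-e}). Since $\Ext^\bullet(E_i,E_i) = \kk$, the functor $V^\bullet \mapsto E_i \otimes V^\bullet$ is an equivalence $\BD(\kk) \xrightarrow{\sim} \langle E_i\rangle$.

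By Example \ref{example-hoh-dim-0-1}, $\HOH_\bullet(\BD(\kk)) = \kk$ concentrated in degree $0$, hence the same holds for each $\langle E_i\rangle$. Additivity then yields
\[
\HOH_\bullet(\CT) \;\cong\; \bigoplus_{i=1}^n \HOH_\bullet(\langle E_i\rangle) \;\cong\; \kk^n,
\]
sitting entirely in degree zero. This immediately gives $\HOH_k(\CT) = 0$ for $k\neq 0$, as well as $\dim \HOH_0(\CT) = n < \infty$; since the left-hand side is intrinsic to $\CT$, the length $n$ of any full exceptional collection must equal $\dim \HOH_0(\CT)$. For the geometric half I would then substitute $\CT = \BD(X)$ and invoke the Hochschild--Kostant--Rosenberg isomorphism (Theorem \ref{theorem-hkr}), which reads $\HOH_k(\BD(X)) = \bigoplus_{q-p=k} H^{p,q}(X)$. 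The vanishing for $k\neq 0$ forces $H^{p,q}(X)=0$ whenever $p\neq q$, while the degree-zero part yields $n = \sum_p \dim H^{p,p}(X)$, as claimed.

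The only genuine subtlety will be making rigorous sense of $\HOH_\bullet(\langle E_i\rangle)$: Hochschild homology is defined once a DG enhancement is fixed, so I must verify that the enhancement induced on $\langle E_i\rangle \subset \CT$ is quasi-equivalent to the standard one on $\BD(\kk)$. This reduces to showing that the endomorphism DG algebra of $E_i$ is formal and quasi-isomorphic to $\kk$, which is immediate from $\Ext^\bullet(E_i,E_i) = \kk$ since a DG algebra with cohomology concentrated in degree zero is automatically formal. Once this identification is in place, no further computation is required.
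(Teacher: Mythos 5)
Your argument is correct and follows exactly the route of the paper's own proof: split $\CT$ by the full exceptional collection, observe each piece is equivalent to $\BD(\kk)$ with Hochschild homology $\kk$ in degree zero (Example~\ref{example-hoh-dim-0-1}), apply additivity (Theorem~\ref{theorem-hoh-additivity}), and then invoke HKR (Theorem~\ref{theorem-hkr}) for the geometric statement. The remark on DG enhancements is a welcome extra precision but does not change the substance.
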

\begin{proof}
The first part follows from the additivity Theorem and Example~\ref{example-hoh-dim-0-1}.
The second part follows from the first and the Hochschild--Kostant--Rosenberg isomorphism of Theorem~\ref{theorem-hkr}.
\end{proof}

\begin{remark}
Note that the conditions of the Corollary are only necessary. The simplest example
showing they are no sufficient is provided by the derived category of an Enriques surface $S$.
It is well known that the Hodge numbers of $S$ are zero away from the diagonal of the Hodge diamond, 
however a full exceptional collection in $\BD(S)$ cannot exist since the Grothendieck group $K_0(\BD(S))$
has torsion.
\end{remark}

In a contrast, the Hochschild cohomology is not additive (it depends not only on the components 
of a semiorthogonal decomposition, but also on the way they are glued together). However, if there is a completely
orthogonal decomposition, then the Hochschild cohomology is additive.

\begin{lemma}[\cite{K09b}]
If $\CT = \langle \CA, \CB \rangle$ is a completely orthogonal decomposition then 
\begin{equation*}
\HOH^\bullet(\CT) = \HOH^\bullet(\CA) \oplus \HOH^\bullet(\CB).
\end{equation*}
\end{lemma}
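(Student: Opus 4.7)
The plan is to deduce the splitting from a corresponding splitting of the identity functor of $\CT$. Associated with the semiorthogonal decomposition $\CT = \langle \CA, \CB \rangle$ are the embedding functors $i_\CA, i_\CB$ and their adjoints (the left adjoint $\alpha$ of $i_\CA$ and the right adjoint $\beta$ of $i_\CB$ from the decomposition lemma). The decomposition triangle~\eqref{equation-sod-triangle} for an arbitrary $T \in \CT$ is functorial, so it upgrades to a distinguished triangle of functors
\begin{equation*}
i_\CB\beta \to \id_\CT \to i_\CA\alpha \to (i_\CB\beta)[1].
\end{equation*}
In the completely orthogonal case $\Hom(A,B) = \Hom(B,A) = 0$ for all $A \in \CA$, $B \in \CB$, so the connecting morphism, which is a natural transformation from a functor valued in $i_\CA(\CA)$ to a shift of a functor valued in $i_\CB(\CB)$, is necessarily zero. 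Hence the triangle splits and we obtain an isomorphism of functors
\begin{equation*}
\id_\CT \cong i_\CA\alpha \oplus i_\CB\beta.
\end{equation*}

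Substituting this into $\HOH^\bullet(\CT) = \Ext^\bullet(\id_\CT, \id_\CT)$ and using bilinearity, I obtain a direct sum decomposition into four groups: the two diagonal ones $\Ext^\bullet(i_\CA\alpha, i_\CA\alpha)$ and $\Ext^\bullet(i_\CB\beta, i_\CB\beta)$, and the two cross terms $\Ext^\bullet(i_\CA\alpha, i_\CB\beta)$ and $\Ext^\bullet(i_\CB\beta, i_\CA\alpha)$. The cross terms vanish for the same reason as above: any natural transformation (or its derived analog) from a functor landing in $\CA$ to a functor landing in $\CB$, or vice versa, is zero by complete orthogonality. For the diagonal terms, full faithfulness of $i_\CA$ and the identity $\alpha \circ i_\CA \cong \id_\CA$ give $\Ext^\bullet(i_\CA\alpha, i_\CA\alpha) \cong \Ext^\bullet(\id_\CA, \id_\CA) = \HOH^\bullet(\CA)$ by restricting natural transformations along $i_\CA$, and symmetrically for $\CB$. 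Combining these steps yields the required decomposition.

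The main obstacle is that this argument is carried out at the level of functors, whereas Hochschild cohomology, as emphasized in the text, is rigorously defined via $\Ext$'s of the diagonal bimodule in a DG-enhancement (or, geometrically, via the structure sheaf of the diagonal in $\BD(X \times X)$). The non-trivial point is to lift the functor-level splitting of $\id_\CT$ to a splitting of the diagonal bimodule (so that bilinearity of $\Ext$ may be applied) and to verify that the Ext vanishings and the identifications with $\HOH^\bullet(\CA)$, $\HOH^\bullet(\CB)$ survive at that level. Once one invokes the kernel-level description of projection functors used in the additivity theorem \cite{K09b} (Theorem~\ref{theorem-hoh-additivity}), the four-term decomposition of $\Ext^\bullet$ and the vanishing of cross terms go through verbatim, and the proof is complete.
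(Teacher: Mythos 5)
The paper does not prove this lemma; it is quoted from~\cite{K09b} with no argument supplied, so there is no proof in the text to compare against. Your sketch is nevertheless a correct strategy, and you have correctly isolated the one genuine subtlety. Two remarks. First, the splitting of the functorial decomposition triangle really is canonical: complete orthogonality kills the connecting morphism componentwise, and it also makes $\Hom(T_\CA,T)\to\Hom(T_\CA,T_\CA)$ an isomorphism, which supplies a natural section, so $\id_\CT\cong i_\CA\alpha\oplus i_\CB\beta$ as endofunctors. Second, the step you flag as the obstacle---promoting this to the bimodule or Fourier--Mukai-kernel level---is exactly what the argument in~\cite{K09b} does: the semiorthogonal decomposition of $\BD(X)$ induces one of $\BD(X\times X)$ with components of the form $\CA_i\boxtimes\CA_j$; the projection kernels $K_\CA$ and $K_\CB$ lie in $\CA\boxtimes\CA$ and $\CB\boxtimes\CB$ respectively; for a merely semiorthogonal pair one gets only $\Ext^\bullet(K_\CB,K_\CA)=0$ (whence the general failure of additivity of $\HOH^\bullet$, as the surrounding text remarks), but complete orthogonality makes $\CA\boxtimes\CA$ and $\CB\boxtimes\CB$ mutually orthogonal in both directions (K\"unneth on $X\times X$), so $\CO_{\Delta}\cong K_\CA\oplus K_\CB$ and both cross-terms in $\Ext^\bullet(\CO_{\Delta},\CO_{\Delta})$ vanish. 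The identification of the two diagonal terms with $\HOH^\bullet(\CA)$ and $\HOH^\bullet(\CB)$ also takes place at this level, using that the embeddings $\CA\boxtimes\CA\hookrightarrow\BD(X\times X)$ and $\CB\boxtimes\CB\hookrightarrow\BD(X\times X)$ are fully faithful. So once you pass to kernels, the four-term computation you outline goes through as you anticipate, and nothing in the plan would fail.
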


This result also has a nice consequence. Recall that for $\CT = \BD(X)$ we have $\HOH^0(\CT) = H^0(X,\CO_X)$
by the HKR isomorphism. This motivates the following

\begin{definition}
A triangulated category $\CT$ is called {\sf connected}, if $\HOH^0(\CT) = \kk$.
\end{definition}

\begin{corollary}\label{corollary-orthogonal-indecomposability}
If $\CT$ is a connected triangulated category then $\CT$ has no completely orthogonal decompositions.
\end{corollary}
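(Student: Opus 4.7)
The plan is to derive a contradiction using the additivity of Hochschild cohomology for completely orthogonal decompositions stated in the preceding lemma. Suppose $\CT = \langle \CA, \CB \rangle$ is a nontrivial completely orthogonal decomposition, meaning that both $\CA$ and $\CB$ contain a nonzero object. The preceding lemma gives an isomorphism
\begin{equation*}
\HOH^0(\CT) \cong \HOH^0(\CA) \oplus \HOH^0(\CB),
\end{equation*}
so it will be enough to show that each summand on the right is at least one-dimensional over $\kk$. This would force $\dim_\kk \HOH^0(\CT) \ge 2$, contradicting the connectedness hypothesis $\HOH^0(\CT) = \kk$.

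To get the lower bound, I would argue that for any triangulated category $\CA$ containing a nonzero object $A$, the identity natural transformation of the identity functor is a nonzero element of $\HOH^0(\CA) = \Hom(\id_\CA,\id_\CA)$, because its component at $A$ is $\id_A \neq 0$. Applying this separately to $\CA$ and $\CB$, both of which are nontrivial by assumption, furnishes the two required nonzero classes in $\HOH^0(\CA)$ and $\HOH^0(\CB)$. Alternatively, and a bit more concretely, one can directly exhibit a two-parameter family inside $\HOH^0(\CT)$ itself: since a completely orthogonal decomposition yields a functorial splitting $T \cong T_\CA \oplus T_\CB$ for every $T \in \CT$, the rule $T \mapsto \lambda\cdot \id_{T_\CA} + \mu\cdot \id_{T_\CB}$ is a natural endotransformation of $\id_\CT$ for every $(\lambda,\mu) \in \kk^2$, and distinct pairs give distinct transformations as soon as both components are nonzero.

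The only small point that requires attention is that the classes constructed above really correspond, under the isomorphism of the preceding lemma, to the identities in $\HOH^0(\CA)$ and $\HOH^0(\CB)$ respectively; this is immediate from the functorial nature of that isomorphism, which is induced by the projections $\CT \to \CA$ and $\CT \to \CB$ associated to the decomposition. I expect no real obstacle here: the proof is essentially a one-line dimension count, and the main work has already been done in establishing the additivity of $\HOH^\bullet$ for completely orthogonal decompositions.
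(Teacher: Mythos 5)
Your argument is correct and is essentially the paper's own: both invoke the additivity of $\HOH^0$ for a completely orthogonal decomposition and then note that a nontrivial triangulated category has $\HOH^0 \ne 0$ because the identity functor has its identity endomorphism. The paper phrases the conclusion as ``one summand must vanish, yet neither can,'' while you phrase it as a dimension count $\dim \HOH^0(\CT) \ge 2 > 1$; these are the same observation.
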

\begin{proof}
If $\CT = \langle \CA,\CB \rangle$ is a completely orthogonal decomposition then $\kk = \HOH^0(\CT) = \HOH^0(\CA) \oplus \HOH^0(\CB)$,
hence one of the summands vanishes. But a nontrivial category always has nontrivial zero Hochschild cohomology (since the identity
functor always has the identity endomorphism).
\end{proof}

\subsection{Indecomposability}

There are triangulated categories which have no nontrivial semiorthogonal decompositions.
First examples were found by Bridgeland in~\cite{Br}.

\begin{proposition}[\cite{K15}]\label{proposition-cy-indecomposable}
If $\CT$ is a connected Calabi--Yau category then $\CT$ has no semiorthogonal decompositions.
\end{proposition}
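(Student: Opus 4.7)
The plan is to reduce to a two-step semiorthogonal decomposition and to observe that the Calabi--Yau property forces any such decomposition to be completely orthogonal; the connectedness assumption then rules this out via Corollary~\ref{corollary-orthogonal-indecomposability}.

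First, suppose we are given an arbitrary semiorthogonal decomposition $\CT = \langle \CA_1, \dots, \CA_m \rangle$ with $m \ge 2$ and with all $\CA_i$ nonzero. Then, by grouping components together, we get a two-step decomposition $\CT = \langle \CA_1, \CB \rangle$ where $\CB = \langle \CA_2, \dots, \CA_m \rangle$, with both $\CA_1$ and $\CB$ nonzero. Therefore it suffices to show that a connected Calabi--Yau category admits no nontrivial two-step semiorthogonal decomposition.

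So suppose $\CT = \langle \CA, \CB \rangle$ with both components nonzero. By definition of an s.o.d.\ we have $\Hom(B, A) = 0$ for all $A \in \CA$ and $B \in \CB$. The key step is to show the opposite vanishing $\Hom(A, B) = 0$ also holds. Because $\CT$ is Calabi--Yau of dimension $n$, the Serre functor acts as $\SS_\CT = [n]$, and the defining property of a Serre functor gives
\begin{equation*}
\Hom(A, B)^\vee \cong \Hom(B, \SS_\CT(A)) = \Hom(B, A[n]).
\end{equation*}
Since $\CA$ is a triangulated subcategory it is closed under the shift, so $A[n] \in \CA$, and hence $\Hom(B, A[n]) = 0$ by semiorthogonality. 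Thus $\Hom(A, B) = 0$ as well, and the decomposition $\CT = \langle \CA, \CB \rangle$ is in fact completely orthogonal.

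Finally, applying Corollary~\ref{corollary-orthogonal-indecomposability} to the completely orthogonal decomposition $\CT = \langle \CA, \CB\rangle$ of the connected category $\CT$, we conclude that one of $\CA$, $\CB$ must be zero, contradicting our assumption. The only delicate point is ensuring that all relevant subcategories really are preserved by the shift functor (so that the Serre-duality argument applies), but this is automatic since components of a semiorthogonal decomposition are triangulated subcategories; no further obstacle arises.
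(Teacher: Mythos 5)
Your proof is correct and follows essentially the same approach as the paper: both use Serre duality in the Calabi--Yau category to show $\Hom(A,B)^\vee \cong \Hom(B,A[n]) = 0$, concluding that the decomposition is completely orthogonal, and then invoke Corollary~\ref{corollary-orthogonal-indecomposability}. The only minor addition is your explicit reduction of a multi-step decomposition to a two-step one, which the paper leaves implicit.
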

\begin{proof}
Assume $\CT$ is a Calabi--Yau category of dimension $n$ and $\CT = \langle \CA,\CB \rangle$ is a semiorthogonal decomposition. 
For any $A \in \CA$ and $B \in \CB$ we have
\begin{equation*}
\Hom(A,B)^\vee = \Hom(B,\SS_\CT(A)) = \Hom(B,A[n]) = 0
\end{equation*}
since $A[n] \in \CA$. Hence the decomposition is completely orthogonal and Corollary~\ref{corollary-orthogonal-indecomposability} applies.
\end{proof}

\begin{corollary}\label{corollary-k-indecomposable}
The category $\BD(\kk)$ is indecomposable.
\end{corollary}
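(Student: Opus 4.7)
The plan is to deduce this as an immediate application of Proposition~\ref{proposition-cy-indecomposable}. For that I only need to verify two things about $\BD(\kk) = \BD(\Spec(\kk))$: that it is Calabi--Yau and that it is connected.

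For the Calabi--Yau property, I would observe that $\Spec(\kk)$ is a smooth projective variety of dimension $0$ with trivial canonical bundle $\omega_{\Spec(\kk)} = \CO_{\Spec(\kk)}$. By the formula $\SS_{\BD(X)}(F) = F\otimes\omega_X[\dim X]$ recalled just after the definition of Serre functor, the Serre functor of $\BD(\kk)$ is the identity, which is the shift $[0]$. Hence $\BD(\kk)$ is a Calabi--Yau category of dimension $0$ in the sense of Definition~\ref{definition-cy-cat}.

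For connectedness, I would invoke Example~\ref{example-hoh-dim-0-1}, which asserts $\HOH^\bullet(\BD(\Spec(\kk))) = \kk$; in particular $\HOH^0(\BD(\kk)) = \kk$, so $\BD(\kk)$ is connected in the sense of the definition preceding the corollary.

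With both properties in hand, Proposition~\ref{proposition-cy-indecomposable} applies and yields that $\BD(\kk)$ admits no nontrivial semiorthogonal decomposition, i.e.\ it is indecomposable. There is no real obstacle here; the only thing one might worry about is whether the extremal case $n=0$ is genuinely covered by the argument of Proposition~\ref{proposition-cy-indecomposable}, but inspection shows it is: for any hypothetical decomposition $\BD(\kk) = \langle \CA, \CB\rangle$ and $A\in\CA$, $B\in\CB$, one has $A[0] = A \in \CA$, so $\Hom(A,B)^\vee = \Hom(B,A) = 0$, the decomposition is completely orthogonal, and Corollary~\ref{corollary-orthogonal-indecomposability} forces a component to vanish.
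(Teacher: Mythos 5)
Your proof is correct and follows the same route as the paper: identify $\BD(\kk)$ as a connected Calabi--Yau category of dimension $0$ and invoke Proposition~\ref{proposition-cy-indecomposable}. The paper's version is terser (it leaves connectedness implicit), but the argument is the same.
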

\begin{proof}
Indeed, $\SS_{\BD(\kk)} = \id$, so it is Calabi--Yau of dimension $0$.
\end{proof}

Besides, one can check that derived categories of curves of positive genus are indecomposable.

\begin{proposition}[\cite{Ok}]\label{proposition-curve-indecomposable}
Let $\CT = \BD(C)$ with $C$ a smooth projective curve. If $g(C) > 0$ then $\CT$ is indecomposable.
If $C = \PP^1$ then any semiorthogonal decomposition of $\CT$ is given by an exceptional pair.
\end{proposition}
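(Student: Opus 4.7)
The plan is to handle three cases separately: $g(C) = 1$, $g(C) \geq 2$, and $C = \PP^1$, with the $g \geq 2$ case being the main obstacle.

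For $g(C) = 1$, the adjunction formula gives $\omega_C \cong \CO_C$, so the Serre functor $\SS_{\BD(C)} = [1]$, and $\BD(C)$ is a Calabi--Yau category of dimension $1$ in the sense of Definition~\ref{definition-cy-cat}. By Theorem~\ref{theorem-hkr}, $\HOH^0(\BD(C)) = H^0(C, \CO_C) = \kk$, so $\BD(C)$ is connected, and Proposition~\ref{proposition-cy-indecomposable} immediately yields indecomposability.

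For $C = \PP^1$, let $\BD(\PP^1) = \langle \CA_1, \dots, \CA_m\rangle$ be any semiorthogonal decomposition. HKR (Theorem~\ref{theorem-hkr}) together with the Hodge numbers of $\PP^1$ gives $\HOH_\bullet(\BD(\PP^1)) = \kk^2$ concentrated in degree zero, so by additivity (Theorem~\ref{theorem-hoh-additivity}) $\sum_i \dim\HOH_0(\CA_i) = 2$. Since $\BD(\PP^1)$ is a hereditary category admitting the full exceptional collection of Example~\ref{example-pn}, a standard classification (using the absence of phantoms, traceable via $K_0$) says that every nonzero proper admissible subcategory is of the form $\langle E \rangle$ for an exceptional $E$, contributing exactly $1$ to $\HOH_0$. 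This forces $m \leq 2$, and in the case $m = 2$ each $\CA_i = \langle E_i \rangle$ with $(E_1, E_2)$ an exceptional pair.

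For $g(C) \geq 2$ (the main obstacle), $\omega_C$ is nontrivial, so $\BD(C)$ is not Calabi--Yau and Proposition~\ref{proposition-cy-indecomposable} does not apply. Suppose for contradiction that $\BD(C) = \langle \CA, \CB \rangle$ is a nontrivial s.o.d. As a first step, I would observe that $\BD(C)$ admits no exceptional objects: if $E$ were exceptional, Serre duality would give $\Ext^1(E, E) = \Hom(E, E \otimes \omega_C)^\vee$, and a Riemann--Roch computation yields $\chi(E, E \otimes \omega_C) = (\rank E)^2 (g - 1) > 0$, forcing this space to be nonzero. The principal technical difficulty is that indecomposable objects such as $\CO_C$ need not lie entirely in one component: the decomposition triangle $\pi_\CB(\CO_C) \to \CO_C \to \pi_\CA(\CO_C) \to \pi_\CB(\CO_C)[1]$ can be genuinely nontrivial even though $\End(\CO_C) = \kk$ (as already happens on $\PP^1$ in the Beilinson decomposition). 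To overcome this, I would follow Okawa~\cite{Ok}: pass from individual objects to the Fourier--Mukai kernels of the projection functors $\pi_\CA, \pi_\CB \in \BD(C \times C)$, decomposing $\CO_\Delta$ accordingly, and then exploit the ampleness (equivalently global generation) of $\omega_C$ for $g \geq 2$ to force one of these kernels to vanish identically, contradicting the assumed nontriviality of the decomposition.
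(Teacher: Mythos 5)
The paper itself does not prove this proposition; it is stated with a citation to Okawa~\cite{Ok} and no argument is given, so there is no ``paper's proof'' to compare against. Judged on its own, your proposal contains one valid case, one circular case, and one case that is only a pointer to the literature.

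Your genus-one argument is correct and self-contained: $\omega_C \cong \CO_C$ makes $\BD(C)$ a connected Calabi--Yau category of dimension $1$, so Proposition~\ref{proposition-cy-indecomposable} applies directly. This is a genuine shortcut for $g=1$ and is worth keeping separate.

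Your $\PP^1$ case, however, is circular. You invoke ``a standard classification\dots{} every nonzero proper admissible subcategory is of the form $\langle E\rangle$ for an exceptional $E$'' --- but that classification \emph{is} the content of the claim for $\PP^1$; it cannot be waved in as known. Nor can it be ``traced via $K_0$'': a phantom subcategory by definition has trivial $K_0$ and trivial $\HOH_\bullet$, so the additivity of $\HOH_0$ gives you the bound $m\le 2$ only \emph{after} you already know each nonzero component contributes at least $1$ to $\HOH_0$, which is precisely what excluding phantoms means. You need an actual argument (e.g.\ analyzing admissible subcategories of the Kronecker quiver algebra) rather than an appeal to the conclusion.

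For $g\ge 2$, your preliminary observation that $\BD(C)$ has no exceptional objects is correct, and the Riemann--Roch computation $\chi(E,E\otimes\omega_C)=(\rank E)^2(g-1)>0$ does force $\Ext^1(E,E)\neq 0$ once one notes that indecomposable objects of $\BD(C)$ are shifts of sheaves. But this does not give indecomposability: semiorthogonal components need not be generated by exceptional objects, so ruling those out leaves the real question untouched. You correctly diagnose that the decomposition triangle of a simple sheaf like $\CO_C$ or a skyscraper can be nontrivial, and you correctly identify the right tool --- decomposing $\CO_\Delta$ via the Fourier--Mukai kernels of the projection functors --- but at exactly the point where the work begins you write ``I would follow Okawa'' and stop. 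The phrase ``exploit the ampleness of $\omega_C$ to force one of the kernels to vanish identically'' names a hoped-for conclusion, not a mechanism. As it stands the $g\ge 2$ case, which is the heart of the proposition, is deferred to the reference rather than proved.
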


One can also show that many surfaces have no semiorthogonal decompositions, see \cite{KaOk}.

The Bridgeland's result shows that Calabi--Yau categories can be thought of as the simplest building blocks
of other categories. Because of that the following observation is useful.

\begin{lemma}[\cite{K15}]
Assume $\BD(X) = \langle \CA,\CB \rangle$ is a semiorthogonal decomposition with $\CA$ being Calabi--Yau category of dimension $n$.
Then $\dim X \ge n$.
\end{lemma}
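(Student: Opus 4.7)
The plan is to use the additivity of Hochschild homology (Theorem \ref{theorem-hoh-additivity}), combined with the Calabi--Yau shift identity (Lemma \ref{lemma-hoh-cy}) and the Hochschild--Kostant--Rosenberg isomorphism (Theorem \ref{theorem-hkr}), to exhibit a Hodge number of $X$ that forces $\dim X \ge n$.

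First I assume $\CA$ is nontrivial (otherwise the statement is vacuous). Then $\HOH^0(\CA) \ne 0$, since the identity endomorphism of $\id_\CA$ furnishes a nonzero class. Second, by Lemma \ref{lemma-hoh-cy} the Calabi--Yau condition $\SS_\CA \cong [n]$ yields an isomorphism of graded vector spaces $\HOH_\bullet(\CA) \cong \HOH^\bullet(\CA)[n]$; unwinding the grading convention $(V[n])^k = V^{k+n}$, this places $\HOH^0(\CA)$ in degree $-n$, so $\HOH_{-n}(\CA) \ne 0$.

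Now I would apply Theorem \ref{theorem-hoh-additivity} to the decomposition $\BD(X) = \langle \CA, \CB \rangle$, which makes $\HOH_{-n}(\CA)$ a direct summand of $\HOH_{-n}(\BD(X))$. By the HKR isomorphism,
\[
\HOH_{-n}(\BD(X)) \cong \bigoplus_{q - p = -n} H^q(X, \Omega^p_X),
\]
so there must exist integers $p, q$ with $0 \le p, q \le \dim X$ and $p - q = n$. Since $p \le \dim X$ and $q \ge 0$ force $p - q \le \dim X$, I conclude $n \le \dim X$.

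There is no serious obstacle here: the whole argument is a short bookkeeping exercise using results already proved in this section. The only point worth a moment's attention is tracking the shift convention so that $\HOH^0(\CA)$ really lands in Hochschild degree $-n$ (rather than $+n$), after which the HKR support bound closes things out immediately.
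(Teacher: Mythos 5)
Your argument is correct and matches the paper's proof essentially step for step: both deduce $\HOH_{-n}(\CA)\cong\HOH^0(\CA)\neq 0$ from Lemma~\ref{lemma-hoh-cy}, lift this to $\HOH_{-n}(\BD(X))\neq 0$ by additivity (Theorem~\ref{theorem-hoh-additivity}), and then conclude via the HKR support bound. The only cosmetic difference is that you spell out the shift bookkeeping and the final bound $p-q=n\le\dim X$ a bit more explicitly than the paper does.
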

\begin{proof}
By Calabi--Yau property of $\CA$ we have $\HOH_{-n}(\CA) = \HOH^0(\CA) \ne 0$ (see Lemma~\ref{lemma-hoh-cy}). 
By additivity of Hochschild homology we then have $\HOH_{-n}(\BD(X)) \ne 0$ as well.
But by HKR isomorphism if $n > \dim X$ then $\HOH_{-n}(\BD(X)) = 0$.
\end{proof}

In fact, most probably in the boundary case $\dim X = n$ there are also strong restrictions.

\begin{conjecture}[\cite{K15}]\label{conjecture-cydim}
Assume $\BD(X) = \langle \CA,\CB \rangle$ is a semiorthogonal decomposition with $\CA$ being Calabi--Yau category of dimension $n = \dim X$.
Then $X$ is a blowup of a Calabi--Yau variety $Y$ of dimension $n$ and $\CA \cong \BD(Y)$.
\end{conjecture}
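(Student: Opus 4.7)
The plan is to extract geometric consequences of the Calabi--Yau structure on $\CA$, contract the locus where this structure ``fails'' in $X$, and then match the resulting blowup decomposition with the given one.

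I would start from the Hochschild side. Combining Lemma~\ref{lemma-hoh-cy} with additivity of Hochschild homology (Theorem~\ref{theorem-hoh-additivity}) yields
\begin{equation*}
\HOH_{-n}(\BD(X)) \supseteq \HOH_{-n}(\CA) \cong \HOH^0(\CA) \neq 0,
\end{equation*}
while the HKR isomorphism (Theorem~\ref{theorem-hkr}) identifies the left hand side with $H^0(X,\omega_X)$. Hence $\omega_X$ is effective, with a distinguished nonzero section arising as the image of $\id_\CA$. Next I would record a categorical form of ``triviality of $\omega_X$ on $\CA$'': letting $i:\CA\hookrightarrow\BD(X)$ be the embedding with right adjoint $i^!$, the comparison of $\SS_\CA \cong i^!\SS_{\BD(X)} i \cong [n]$ with $\SS_{\BD(X)} \cong -\otimes\omega_X[n]$ gives a functorial isomorphism $i^!(i(A)\otimes\omega_X)\cong A$ for every $A\in\CA$. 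Equivalently, the cone of the canonical morphism $A\otimes\omega_X\to A$ obtained from the preceding section lies in $\CB$, so $\CB$ absorbs all the geometry along which $\omega_X$ fails to be trivial.

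The geometric core of the proof should now promote this picture to an honest birational contraction: one would like $|m\omega_X|$ to be base-point free for $m\gg 0$ and to define a morphism $\phi:X\to Y$ which is an iterated blowup of smooth centers with $\omega_Y\cong\CO_Y$. Granted this, iterating Orlov's blowup formula produces a semiorthogonal decomposition
\begin{equation*}
\BD(X)=\langle \BD(Z_1)_{k_1},\dots,\BD(Z_s)_{k_s}, L\phi^*\BD(Y)\rangle,
\end{equation*}
whose last component is a Calabi--Yau category of dimension $n$ equivalent to $\BD(Y)$, and whose other components are generated by sheaves supported on the exceptional divisors. To finish, one identifies this last component with $\CA$: indecomposability of connected CY categories (Proposition~\ref{proposition-cy-indecomposable}) together with a mutation argument should force any two CY subcategories of the same top dimension in $\BD(X)$ to coincide up to equivalence, giving $\CA\cong\BD(Y)$ and $\CB$ generated by the exceptional pieces.

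The main obstacle is the birational input. Translating the categorical condition ``$\omega_X$ is trivial on $\CA$ modulo $\CB$'' into honest semi-ampleness of $\omega_X$ and smoothness of $Y$ requires substantial MMP-type arguments that have no direct categorical shortcut. In particular, one must rule out contributions of $\CB$ to $H^{n,q}(X)$ for $q>0$ in order to pin down the Iitaka dimension of $\omega_X$, run a $K_X$-MMP whose exceptional divisors are $\PP^{c-1}$-bundles over smooth centers of the correct codimension, and then verify a posteriori that the resulting minimal model $Y$ is smooth (not merely terminal) Calabi--Yau. A complete proof almost certainly requires combining the derived-category data with detailed positivity and classification results in birational geometry, and the boundary between what can be seen categorically and what genuinely needs MMP seems to be where the difficulty of the conjecture truly lies.
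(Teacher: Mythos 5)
This statement is explicitly labeled as a \emph{Conjecture} in the paper (attributed to~\cite{K15}), and the paper offers no proof --- so there is nothing to compare your sketch against. What the paper does prove is the much weaker preceding Lemma (if $\CA\subset\BD(X)$ is admissible and Calabi--Yau of dimension $n$, then $\dim X\ge n$), precisely by the HKR-and-additivity argument you open with. You should not present a strategy for an open conjecture as a proof attempt without flagging this up front.

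As for your sketch itself: the initial step ($\HOH_{-n}(\CA)\ne 0$ forces $H^0(X,\omega_X)\ne 0$ via HKR and Lemma~\ref{lemma-hoh-cy}) is correct and is exactly the content of the Lemma the paper \emph{does} prove. Your identification $\SS_\CA\cong i^!\circ\SS_{\BD(X)}\circ i$ for an admissible embedding is also standard. But you are candid that the geometric core --- semi-ampleness of $\omega_X$, a $K_X$-MMP producing only smooth blowdowns, and smoothness (not merely terminality) of the resulting minimal model --- is entirely open, and that is indeed where the conjecture lives. One additional point you should treat with more suspicion: the closing ``mutation argument'' that ``any two CY subcategories of the same top dimension in $\BD(X)$ must coincide up to equivalence.'' The paper itself (Section~\ref{section-bad-news}) exhibits the failure of the Jordan--H\"older property for semiorthogonal decompositions, so uniqueness of components is not something one gets for free from mutations; any such uniqueness claim for CY components would itself be a nontrivial theorem needing proof, and it is part of what makes the conjecture hard.
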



We will see in the next section that the converse is true.

Unfortunately, analogs of these results for fractional Calabi--Yau categories are not known,
and in fact, some of them are just not true. For example, if $X$ is a smooth cubic surface in $\PP^3$
and $\CA_X = \CO_X^\perp \subset \BD(X)$, then $\CA_X$ is a fractional Calabi--Yau category
of dimension $4/3$, but it is decomposable (in fact, it has a full exceptional collection).


\section{Griffiths components}

In this section we discuss the behavior of derived categories under standard birational transformations.
We start with a relative version of splitting off an exceptional object.

\subsection{Relative exceptional objects}

Assume $f:X \to Y$ is a morphism of smooth projective varieties.
Let $\CE \in \BD(X)$ be an object such that
\begin{equation}\label{define-relative-exceptional}
Rf_*\RCHom(\CE,\CE) \cong \CO_Y.
\end{equation} 
When $Y = \Spec(\kk)$, the above condition is just the definition of an exceptional object.

\begin{lemma}\label{lemma-splitoff-base}
If $\CE$ enjoys~\eqref{define-relative-exceptional} then the functor $\BD(Y) \to \BD(X)$, $F \mapsto \CE \lotimes Lf^*(F)$ is fully faithful
and gives a semiorthogonal decomposition
\begin{equation*}
\BD(X) = \langle \Ker Rf_*\RCHom(\CE,-), \CE\lotimes Lf^*(\BD(Y)) \rangle.
\end{equation*}
\end{lemma}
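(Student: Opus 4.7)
The plan is to apply Lemma~\ref{lemma-admissible-sod} (the right admissible version) to the functor
\begin{equation*}
\beta : \BD(Y) \to \BD(X), \qquad F \mapsto \CE \lotimes Lf^*(F).
\end{equation*}
To invoke that lemma I need to produce a right adjoint $\beta^!$ and check that $\beta^! \circ \beta \cong \id_{\BD(Y)}$; the displayed semiorthogonal decomposition will then follow immediately, with the orthogonal complement equal to $\Ker \beta^!$.

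First I would identify $\beta^!$. Using the adjunctions $(Lf^*, Rf_*)$ and $(\lotimes, \RCHom)$ together with the local adjunction $\RCHom(A \lotimes B, C) \cong \RCHom(A, \RCHom(B, C))$, one has a chain of functorial isomorphisms
\begin{equation*}
\Hom_X(\CE \lotimes Lf^*F,\, G) \cong \Hom_X(Lf^*F,\, \RCHom(\CE, G)) \cong \Hom_Y(F,\, Rf_*\RCHom(\CE, G)),
\end{equation*}
which identifies the right adjoint as $\beta^!(G) = Rf_*\RCHom(\CE, G)$. In particular, the kernel of $\beta^!$ is precisely the first component stated in the lemma.

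Next I would verify $\beta^! \beta \cong \id$. The key computation combines the smoothness identity $\RCHom(\CE, -) \cong \CE^\vee \lotimes (-)$, the fact that $Lf^*$ is a tensor functor, and the projection formula. Explicitly, for $F \in \BD(Y)$,
\begin{align*}
\beta^!\beta(F)
&= Rf_*\RCHom(\CE,\, \CE \lotimes Lf^*F) \\
&\cong Rf_*\bigl(\CE^\vee \lotimes \CE \lotimes Lf^*F\bigr) \\
&\cong Rf_*\bigl(\RCHom(\CE,\CE) \lotimes Lf^*F\bigr) \\
&\cong Rf_*\RCHom(\CE,\CE) \lotimes F \\
&\cong \CO_Y \lotimes F \cong F,
\end{align*}
where the fourth isomorphism is the projection formula and the fifth uses the hypothesis~\eqref{define-relative-exceptional}.

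With both ingredients in hand, Lemma~\ref{lemma-admissible-sod} immediately yields that $\beta$ is fully faithful and produces the semiorthogonal decomposition $\BD(X) = \langle \Ker \beta^!, \beta(\BD(Y)) \rangle$, which is exactly the claim. The only point requiring a little care is to confirm that the canonical identifications above really come from the adjunction units/counits (so that the composite $\beta^!\beta \to \id$ obtained is the counit, not just some abstract isomorphism); this is standard, and I do not expect any real obstacle beyond that bookkeeping.
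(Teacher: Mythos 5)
Your proof is correct and follows essentially the same route as the paper's: identify $Rf_*\RCHom(\CE,-)$ as the right adjoint of $\CE\lotimes Lf^*(-)$, compute that the composition is the identity via the smoothness identity $\RCHom(\CE,\CE)\cong\CE^\vee\lotimes\CE$ and the projection formula, and then invoke Lemma~\ref{lemma-admissible-sod}. The extra care you note about the isomorphism being realized by the adjunction counit is a reasonable bookkeeping concern that the paper leaves implicit.
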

\begin{proof}
Indeed, $Rf_*\RCHom(\CE,-)$ is the right adjoint of $\CE\lotimes Lf^*(-)$, and since
\begin{equation*}
Rf_*\RCHom(\CE,\CE\lotimes Lf^*(F)) \cong 
Rf_*(\RCHom(\CE,\CE)\lotimes Lf^*(F)) \cong
Rf_*\RCHom(\CE,\CE) \lotimes F,
\end{equation*}
the condition~\eqref{define-relative-exceptional} and Lemma~\ref{lemma-admissible-sod} prove the result.
\end{proof}

\begin{example}
Let $X = \PP_Y(\CV) \xrightarrow{\ f\ } Y$ be the projectivization of a vector bundle $\CV$ of rank $r$ on $Y$. Then $Rf_*\CO_X \cong \CO_Y$,
hence any line bundle on $X$ satisfies~\eqref{define-relative-exceptional}. So, iterating the construction of Lemma~\ref{lemma-splitoff-base}
(along the lines of Example~\ref{example-pn})
we get Orlov's s.o.d.\ for the projectivization
\begin{equation}\label{equation-sod-projectivization}
\BD(\PP_Y(\CV)) = \langle \CO(1-r)\otimes Lf^*(\BD(Y)), \dots, \CO(-1)\otimes Lf^*(\BD(Y)), Lf^*(\BD(Y)) \rangle.
\end{equation}
\end{example}

\begin{example}
Let $\CQ \subset \PP_Y(\CV)$ be a flat family of quadrics with $f: X \to Y$ being the projection. 
Then similarly we get a semiorthogonal decomposition
\begin{equation}\label{equation-sod-quadric-bundle}
\BD(\CQ) = \langle \BD(Y,\Cl_0),\CO(3-r)\otimes Lf^*(\BD(Y)), \dots, \CO(-1)\otimes Lf^*(\BD(Y)), Lf^*(\BD(Y)) \rangle,
\end{equation}
where $\Cl_0$ is the sheaf of even parts of Clifford algebras on $Y$ associated with the family of quadrics $\CQ$
(see~\cite{K08} for details).
\end{example}

\subsection{Semiorthogonal decomposition of a blowup}

The most important for the birational geometry is the following s.o.d.
Let $X = \Bl_Z(Y)$ be the blowup of a scheme $Y$ in a locally complete intersection subscheme $Z \subset Y$ of codimension $c$. Then we have
the following blowup diagram
\begin{equation*}
\xymatrix{
X \ar[d]_f & E \ar[l]_i \ar@{=}[r] \ar[d]^p & \PP_Z(\CN_{Z/Y}) \\
Y & Z \ar[l]_j
}
\end{equation*}
where the exceptional divisor $E$ is isomorphic to the projectivization of the normal bundle, and its natural map to $Z$ 
is the standard projection of the projectivization. Note also that under this identification, the normal bundle $\CO_E(E)$ 
of the exceptional divisor is isomorphic to the Grothendieck line bundle $\CO_{E}(-1)$
\begin{equation}\label{equation-oe-e}
\CO_E(E) \cong \CO_{E}(-1)
\end{equation}
on the projectivization $E \cong \PP_Z(\CN_{Z/Y})$. We will use the powers of this line bundle 
to construct functors from $\BD(Z)$ to $\BD(X)$. 
For each $k \in \ZZ$ we consider the Fourier--Mukai functor with kernel $\CO_E(k)$, i.e.\
\begin{equation}
\Phi_{\CO_E(k)}(F) = Ri_*(\CO_{E}(k) \lotimes Lp^*(-)) \colon \BD(Z) \to \BD(X).
\end{equation} 

\begin{theorem}[Orlov's blowup formula]\label{theorem-sod-blowup}
The functor $\Phi_{\CO_E(k)} : \BD(Z) \to \BD(X)$ is fully faithful for each $k$ as well as the functor $Lf^*:\BD(Y) \to \BD(X)$. 
Moreover, they give the following semiorthogonal decomposition
\begin{equation}\label{equation-sod-blowup}
\BD(X) = \langle \Phi_{\CO_E(1-c)}(\BD(Z)), \dots, \Phi_{\CO_E(-1)}(\BD(Z)), Lf^*(\BD(Y)) \rangle.
\end{equation}
\end{theorem}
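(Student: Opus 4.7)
The plan is to verify four things: full faithfulness of $Lf^\ast$, full faithfulness of each $\Phi_{\CO_E(k)}$, semiorthogonality of the listed blocks, and generation of $\BD(X)$ by them.

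Full faithfulness of $Lf^\ast$ reduces, via Lemma~\ref{lemma-admissible-sod}, to the classical identity $Rf_\ast \CO_X \cong \CO_Y$: this identity holds because $f$ is a blowup of an lci subscheme in a smooth variety, which one checks by base change to $Z$ combined with $Rp_\ast \CO_E \cong \CO_Z$ for the projective bundle $p$, and the projection formula then gives $Rf_\ast \circ Lf^\ast \cong \id$. For each $\Phi_{\CO_E(k)}$, the right adjoint can be written explicitly via Grothendieck duality as $G \mapsto Rp_\ast(\CO_E(-k-1) \lotimes Li^\ast G)[-1]$, using $\omega_{E/X} \cong \CO_E(E) \cong \CO_E(-1)$ from~\eqref{equation-oe-e}. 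The identity $\Phi_{\CO_E(k)}^! \circ \Phi_{\CO_E(k)} \cong \id_{\BD(Z)}$ then reduces, via the standard triangle $G \otimes \CO_E(1)[1] \to Li^\ast Ri_\ast G \to G$ attached to the divisor embedding (coming from the Koszul resolution $\CO_X(-E) \to \CO_X$ of $i_\ast \CO_E$), to the projective bundle vanishings
\begin{equation*}
Rp_\ast \CO_E(m) = 0 \ \text{ for } -c+1 \le m \le -1, \qquad Rp_\ast \CO_E \cong \CO_Z.
\end{equation*}

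Semiorthogonality of the blocks follows from the same two ingredients. For two kernels $\Phi_{\CO_E(k)}$ and $\Phi_{\CO_E(\ell)}$ with $1-c \le k < \ell \le -1$, the relevant $\RHom$ computation collapses, after applying the right adjoint formula and the $Li^\ast Ri_\ast$ triangle, to expressions of the form $Rp_\ast \CO_E(k-\ell-1)$ and $Rp_\ast \CO_E(k-\ell)$, both of which lie in the vanishing range since $k - \ell \in [1-c, -1]$. For a pair $(Lf^\ast, \Phi_{\CO_E(k)})$, one uses the functoriality isomorphism $Rf_\ast \circ Ri_\ast \cong Rj_\ast \circ Rp_\ast$ to reduce again to the same projective bundle vanishing.

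The main obstacle is generation. My preferred route is to show that the right orthogonal $\CC \subset \BD(X)$ to all listed subcategories is zero. Orthogonality to $Lf^\ast(\BD(Y))$ translates via adjunction to $Rf_\ast T = 0$; orthogonality to the $\Phi_{\CO_E(k)}$ blocks translates, using the right adjoint formula above, to the vanishing of $Rp_\ast(\CO_E(-k-1) \lotimes Li^\ast T)$ for each $k \in [1-c, -1]$, i.e.\ for each twist in a range complementary to the projective bundle vanishing range. Combined with the Orlov decomposition~\eqref{equation-sod-projectivization} for the projective bundle $E = \PP_Z(\CN_{Z/Y})$, these vanishings force $Li^\ast T = 0$. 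Once $Rf_\ast T = 0$ and $Li^\ast T = 0$, a support argument using that $f$ is an isomorphism over $Y \setminus Z$ and an iteration on powers of the ideal of $E$ yields $T = 0$. The technically delicate step is precisely this last one: controlling how an object of $\BD(X)$ can fail to be seen by both $Rf_\ast$ and $Li^\ast$, which is where the smoothness of $X$ and the lci hypothesis on $Z$ are really used.
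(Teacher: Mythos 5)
The first three parts of your argument (full faithfulness of $Lf^*$, full faithfulness of each $\Phi_{\CO_E(k)}$, and pairwise semiorthogonality) run parallel to the paper's sketch and are essentially correct, modulo one small point: you propose to prove $Rf_*\CO_X \cong \CO_Y$ by ``base change to $Z$,'' but the fiber square formed by $f$ and $j:Z \hookrightarrow Y$ is not $\Tor$-independent when $c > 1$ (note $\dim E = \dim X - 1$, not the expected $\dim X - c$), so derived base change does not apply there. The paper instead establishes this by a direct local computation, embedding $X$ into $Y \times \PP^{c-1}$ and resolving $\CO_X$.

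The real gap is in the generation step, and it is more serious than you indicate. You claim that orthogonality of $T$ to the $\Phi_{\CO_E(k)}$ blocks yields $Rp_*(\CO_E(m) \lotimes Li^*T) = 0$ for $m \in [0, c-2]$, and that these vanishings ``combined with the Orlov decomposition'' force $Li^*T = 0$. That is not so. There are only $c-1$ such vanishings, while the projective bundle $E = \PP_Z(\CN_{Z/Y})$ has an Orlov decomposition into $c$ components; the $c-1$ vanishings only force $Li^*T$ to lie in the remaining ``extremal'' piece $\CO_E(1-c) \otimes Lp^*(\BD(Z))$, not to vanish. You then misplace the difficulty: once one actually knows both $Rf_*T = 0$ (hence $\supp T \subset E$) and $Li^*T = 0$ (hence $\supp T \cap E = \emptyset$), the conclusion $T = 0$ is immediate, with no iteration on ideal powers required. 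The genuinely delicate step is exactly the one you skip: converting the hypothesis $Rf_*T = 0$ into the missing $c$-th vanishing $Rp_*(\CO_E(c-1)\lotimes Li^*T)=0$. The paper does this by computing $Lf^* \circ Rj_*$ and showing that it agrees with $Ri_* \circ Lp^*$ modulo the first $c-1$ components of the decomposition; this shows $\Phi_{\CO_E(0)}(\BD(Z))$, and hence all of $Ri_*(\BD(E))$, lies in the span of the proposed decomposition, which is what actually provides the full window of $c$ twists needed to conclude $Li^*T = 0$. Without this mutation-type computation (or some substitute using the explicit relation between $Rf_*$ and $Rp_*Li^*$ that goes beyond naive base change), the argument does not close.
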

\begin{proof}[Sketch of proof]
First, $Rf_*\CO_X \cong \CO_Y$ by a local computation (locally we can assume that the ideal of $Z$
is generated by $c$ functions, this allows to embed $X$ explicitly into $Y \times \PP^{c-1}$
and to write down an explicit resolution for its structure sheaf; pushing it forward to $Y$ 
proves the claim). Hence by Lemma~\ref{lemma-splitoff-base} the functor $Lf^*$ is fully faithful.

Further, the right adjoint functor $\Phi^!_{\CO_E(k)}$ of $\Phi_{\CO_E(k)}$ is given by 
\begin{equation*}
F \mapsto 
Rp_*(\CO_{E}(-k) \lotimes i^!(F)) =
Rp_*(\CO_{E}(-k) \lotimes Li^*(F) \otimes \CO_E(E)[-1]) =
Rp_*(\CO_{E}(-k-1) \lotimes Li^*(F)[-1]),
\end{equation*}
and hence the composition $\Phi^!_{\CO_E(k)} \circ \Phi_{\CO_E(k)}$ is given by
\begin{equation*}
F \mapsto 
Rp_*(\CO_{E}(-k-1) \lotimes Li^*(Ri_*(\CO_{E}(k) \lotimes Lp^*(F))))[-1] \cong
Rp_*(\CO_{E}(-1) \lotimes Li^*(Ri_*(Lp^*(F))))[-1]. 
\end{equation*}
Note that $i$ is a divisorial embedding, hence the composition $Li^*\circ Ri_*$ comes wtih a distinguished triangle
\begin{equation*}
G \otimes \CO_E(1)[1] \to Li^*Ri_*(G) \to G \to G \otimes \CO_E(1)[2]
\end{equation*}
(this is a derived category version of the ``fundamental local isomorphism'' $\Tor_p(i_*G,i_*\CO_E) \cong G \otimes \Lambda^p\CN^\vee_{E/X}$ 
together with the isomorphism $\CN^\vee_{E/X} \cong \CO_E(-E) \cong \CO_E(1)$, see~\cite[Cor.~11.4(ii)]{H06}). Therefore we have a distinguished triangle
\begin{equation*}
Rp_*(Lp^*(F)) \to \Phi^!_{\CO_E(k)}\Phi_{\CO_E(k)}(F) \to Rp_*(\CO_E(-1) \lotimes Lp^*(F))[-1] \to Rp_*(Lp^*(F))[1]. 
\end{equation*}
Using the projection formula and the fact that $Rp_*\CO_E \cong \CO_Z$, $Rp_*\CO_E(-1) = 0$, we conclude that $\Phi^!_{\CO_E(k)}\Phi_{\CO_E(k)}(F) \cong F$,
hence $\Phi_{\CO_E(k)}$ is fully faithful.

Analogously, computing the composition $\Phi^!_{\CO_E(k)}\Phi_{\CO_E(l)}$ we get a distinguished triangle
\begin{equation*}
Rp_*(\CO_E(l-k) \lotimes Lp^*(F)) \to \Phi^!_{\CO_E(k)}\Phi_{\CO_E(l)}(F) \to Rp_*(\CO_E(l-k-1) \lotimes Lp^*(F))[-1] \to \dots
\end{equation*}
Recall that $Rp_*\CO_E(-t) = 0$ for $1 \le t \le c-1$. 
As for $1 - c \le l < k \le -1$
we have $1-c \le l-k,\ l-k-1 \le -1$, hence we have $\Phi^!_{\CO_E(k)}\circ\Phi_{\CO_E(l)} = 0$. 
This shows that the first $c-1$ components of~\eqref{equation-sod-blowup} are semiorthogonal. 

Finally, for the composition $Rf_*\circ \Phi_{\CO_E(k)}$ we have
\begin{equation*}
Rf_*\circ \Phi_{\CO_E(k)}(F) = 
Rf_*Ri_*(\CO_E(k)\lotimes Lp^*(F)) \cong
Rj_*Rp_*(\CO_E(k)\lotimes Lp^*(F)) \cong
Rj_*(Rp_*\CO_E(k) \lotimes F),
\end{equation*}
and since $Rp_*\CO_E(k) = 0$ for $1-c \le k \le -1$, it follows that the composition is zero, and hence the last component 
of~\eqref{equation-sod-blowup} is semiorthogonal to the others. 

It remains to show that the components we just described generate the whole category $\BD(X)$. This computation is slightly
technical. Basically one can compute the composition $Lf^*\circ Rj_*$ and show that it equals $Ri_*\circ Lp^*$ modulo
the first $c-1$ components of~\eqref{equation-sod-blowup}. This means that the RHS of~\eqref{equation-sod-blowup} contains 
the subcategory $\Phi_{\CO_E}(\BD(Z))$. By~\eqref{equation-sod-projectivization}, it then follows that $Ri_*(\BD(E))$
is contained in the RHS, hence any object in the orthogonal is in the kernel of $Li^*$, hence is supported on $X \setminus E$.
But $f$ defines an isomorphism $X \setminus E \cong Y \setminus Z$, hence any such object $F$ can be written as
$Lf^*(Rf_*(F))$, hence still lives in the RHS of~\eqref{equation-sod-blowup}. Altogether, this argument proves the Theorem.
\end{proof}

Roughly speaking, we can interpret Theorem~\ref{theorem-sod-blowup} by saying that the ``difference'' between the derived categories of $X$ and $Y$ is given by a number
of derived categories of subvarieties of dimension $\le n-2$, where $n = \dim X = \dim Y$. On the other hand, by Weak Factorization
Theorem any two birational varieties can be linked by a sequence of blowups and blowdowns with smooth (and hence lci) centers.
Thus one can get the resulting derived category from the original one by an iterated ``addition'' and ``subtraction''
of derived categories of smooth projective varieties of dimension at most $n-2$. 

\subsection{Griffiths components}

The above observation suggests the following series of definitions.

\begin{definition}
Define the {\sf geometric dimension} of a triangulated category $\CA$, $\gdim(\CA)$, as the minimal integer $k$
such that $\CA$ can be realized as an admissible subcategory of a smooth projective and connected variety of dimension $k$.
\end{definition}

\begin{example}\label{example-gdim-0}
If $\CA$ is a triangulated category of geometric dimension $0$ then $\CA \cong \BD(\kk)$.
Indeed, by definition $\CA$ should be an admissible subcategory of the derived category of a smooth projective
connected variety of dimension 0, i.e., of $\BD(\Spec(\kk)) = \BD(\kk)$. But this category
is indecomposable (see Corollary~\ref{corollary-k-indecomposable}), hence $\CA \cong \BD(\kk)$.
\end{example}

\begin{example}\label{example-gdim-1}
If $\CA$ is an indecomposable triangulated category of geometric dimension $1$ then $\CA \cong \BD(C)$, 
where $C$ is a curve of genus $g \ge 1$. Indeed, by definition $\CA$ should be an admissible subcategory 
of the derived category of a smooth projective connected variety of dimension 1, i.e., of $\BD(C)$. 
If $g(C) \ge 1$ then $\BD(C)$ is indecomposable by Proposition~\ref{proposition-curve-indecomposable} and so $\CA = \BD(C)$. 
If $g(C) = 0$, i.e., $C \cong \PP^1$, then again by Proposition~\ref{proposition-curve-indecomposable} 
any nontrivial decomposition of $\BD(C)$ consists of two exceptional objects, so if $\CA \subset \BD(C)$ 
is its indecomposable admissible subcategory then $\CA \cong \BD(\kk)$, but then its geometric dimension is 0.
\end{example}

A classification of triangulated categories of higher geometrical dimension (if possible)
should be much more complicated. For instance, it was found out recently that some surfaces
of general type with $p_g = q = 0$ contain admissible subcategories with zero Hochschild homology
(so called quasiphantom categories). These categories are highly nontrivial examples of categories
of geometric dimension 2.

\begin{definition}
A semiorthogonal decomposition $\CT = \langle \CA_1,\dots,\CA_m \rangle$ is {\sf maximal}, if each component $\CA_i$
is an indecomposable category, i.e., does not admit a nontrivial semiorthogonal decomposition.
\end{definition}

\begin{definition}
Let $\BD(X) = \langle \CA_1,\dots,\CA_m \rangle$ be a maximal semiorthogonal decomposition. 
Let us say that its component $\CA_i$ is a {\sf Griffiths component}, if 
\begin{equation*}
\gdim(\CA_i) \ge \dim X - 1.
\end{equation*}
We denote by
\begin{equation*}
\Griff(X) := \{ \CA_i \mid \gdim(\CA_i) \ge \dim X - 1 \}
\end{equation*}
the set of Griffiths components of a given maximal semiorthogonal decomposition of $\BD(X)$.
\end{definition}

One could hope that the set of Griffiths components $\Griff(X)$ is well defined (does not depend on the choice of a maximal
semiorthogonal decomposition). Let us imagine this is the case and try to discuss rationality of Fano 3-folds 
on this basis. First, note the following

\begin{lemma}
If the set $\Griff(X)$ of Griffiths components is well defined then it is a birational invariant.
\end{lemma}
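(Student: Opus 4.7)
The plan is to invoke the Weak Factorization Theorem together with Orlov's blowup formula (Theorem~\ref{theorem-sod-blowup}) and track how Griffiths components behave under a single smooth blowup.

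Since any birational map between smooth projective varieties can be written as a sequence of blowups and blowdowns with smooth (hence lci) centers, it suffices to show that $\Griff(X) = \Griff(\TX)$ whenever $\TX = \Bl_Z(X)$ with $Z \subset X$ smooth of codimension $c \ge 2$. Set $n := \dim X = \dim \TX$; note that $\dim Z \le n - 2$.

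Next, I would apply Theorem~\ref{theorem-sod-blowup} to obtain the semiorthogonal decomposition
\begin{equation*}
\BD(\TX) = \langle \Phi_{\CO_E(1-c)}(\BD(Z)), \dots, \Phi_{\CO_E(-1)}(\BD(Z)), Lf^*(\BD(X)) \rangle,
\end{equation*}
and refine each component to a maximal semiorthogonal decomposition: pick maximal decompositions $\BD(X) = \langle \CA_1,\dots,\CA_m\rangle$ and $\BD(Z) = \langle \CB_1,\dots,\CB_s\rangle$ (using that admissible subcategories of an admissible subcategory are again admissible, so geometric dimension is inherited) and splice them into the blowup decomposition. This yields a maximal semiorthogonal decomposition of $\BD(\TX)$ whose components are either of the form $\CA_j$ (embedded via $Lf^*$) or of the form $\Phi_{\CO_E(k)}(\CB_j)$ for $1-c \le k \le -1$.

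Now I would compare geometric dimensions. Any component $\CB_j$ is an admissible subcategory of $\BD(Z)$, so
\begin{equation*}
\gdim(\CB_j) \le \dim Z \le n - 2 < \dim\TX - 1,
\end{equation*}
so none of the $\Phi_{\CO_E(k)}(\CB_j)$ is a Griffiths component of $\BD(\TX)$. On the other hand, a component $\CA_j$ is a Griffiths component of $\BD(X)$ iff $\gdim(\CA_j) \ge n-1 = \dim\TX - 1$, which is precisely the condition for $\CA_j$ to be a Griffiths component of $\BD(\TX)$. Hence the set of Griffiths components of the constructed maximal decomposition of $\BD(\TX)$ coincides with $\Griff(X)$. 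Using the assumption that $\Griff$ is independent of the chosen maximal decomposition, we conclude $\Griff(\TX) = \Griff(X)$.

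The only real subtlety is to make sure that maximal semiorthogonal decompositions of $\BD(X)$ and $\BD(Z)$ exist and can be assembled into a maximal decomposition of $\BD(\TX)$ whose pieces are the ones listed; but this is implicit in the very hypothesis that $\Griff$ is well defined, since otherwise the target of the lemma is empty. Everything else is just bookkeeping of dimensions.
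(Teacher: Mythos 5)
Your proof is correct and follows essentially the same route as the paper: reduce via Weak Factorization to a single smooth blowup, splice maximal decompositions of $\BD(X)$ and $\BD(Z)$ into Orlov's blowup decomposition, and observe that the $\BD(Z)$-pieces have geometric dimension at most $\dim Z \le \dim X - 2$ and hence contribute no Griffiths components.
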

\begin{proof}
Recall that by Weak Factorization Theorem if $X$ and $Y$ are smooth projective varieties birational to each other
then there is a sequence of blowups and blowdowns with smooth centers connecting $X$ and~$Y$. So, to show that
the set of Griffiths components is a birational invariant, it is enough to check that it does not change under 
a smooth blowup. So, assume that $X = \Bl_Z(Y)$. Let $\BD(Y) = \langle \CA_1,\dots,\CA_m \rangle$ and
$\BD(Z) = \langle \CB_1, \dots, \CB_k \rangle$ be maximal semiorthogonal decompositions. Then by Theorem~\ref{theorem-sod-blowup}
we have a semiorthogonal decomposition of $\BD(X)$ with components $\CA_i$ and $\CB_j$ (repeated $c-1$ times, where
$c$ is the codimension of $Z$). Note also, that $\CB_j$ is an admissible subcategory of $\BD(Z)$, hence
\begin{equation*}
\gdim(\CB_j) \le \dim Z \le \dim X - 2,
\end{equation*}
hence none of $\CB_j$ is a Griffiths component of $\BD(X)$. Thus the sets of Griffiths components of $\BD(X)$ and $\BD(Y)$ coincide.
\end{proof}

\begin{corollary}\label{corollary-rational-griffiths}
If the set $\Griff(X)$ of Griffiths components is well defined then any rational variety of dimension at least $2$ has no Griffiths components.
\end{corollary}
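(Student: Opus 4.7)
The plan is to combine the birational invariance of Griffiths components (just established in the previous lemma, granted that $\Griff$ is well defined) with an explicit maximal semiorthogonal decomposition of $\BD(\PP^n)$. Since any rational variety $X$ of dimension $n \ge 2$ is birational to $\PP^n$, by the invariance lemma we have $\Griff(X) = \Griff(\PP^n)$, so it suffices to show that $\Griff(\PP^n) = \emptyset$ for $n \ge 2$.

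First I would invoke the Beilinson decomposition from Example~\ref{example-pn}:
\begin{equation*}
\BD(\PP^n) = \langle \CO(-n), \CO(-n+1), \dots, \CO(-1), \CO \rangle,
\end{equation*}
where each bracketed component is the subcategory generated by an exceptional line bundle and is therefore equivalent to $\BD(\kk)$. By Corollary~\ref{corollary-k-indecomposable}, $\BD(\kk)$ is indecomposable, so this semiorthogonal decomposition is maximal in the sense of the definition above.

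Next I would compute the geometric dimension of each component. By Example~\ref{example-gdim-0}, the only triangulated category of geometric dimension $0$ is $\BD(\kk)$, and conversely $\gdim(\BD(\kk)) = 0$ (it sits inside $\BD(\Spec(\kk))$). Hence each component $\CA_i$ of the Beilinson decomposition satisfies $\gdim(\CA_i) = 0$. Since $\dim\PP^n - 1 = n - 1 \ge 1$ for $n \ge 2$, we have
\begin{equation*}
\gdim(\CA_i) = 0 < n - 1 = \dim\PP^n - 1,
\end{equation*}
so no component of this maximal decomposition qualifies as a Griffiths component. Therefore $\Griff(\PP^n) = \emptyset$, and by birational invariance $\Griff(X) = \emptyset$ for every rational $X$ with $\dim X \ge 2$.

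The argument is essentially routine once the previous lemma is in hand; the only subtle point is the dimension bound $n \ge 2$, which is used precisely to ensure that the geometric-dimension threshold $\dim X - 1$ is strictly above $0 = \gdim(\BD(\kk))$. The case $n = 1$ is genuinely excluded, which is consistent with the fact that $\PP^1$ itself admits no nontrivial birational modifications and the notion is vacuous there.
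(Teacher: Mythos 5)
Your proof is correct and follows the same line of reasoning as the paper: reduce to $\PP^n$ via the birational invariance lemma, invoke the Beilinson decomposition from Example~\ref{example-pn}, and note that every component has geometric dimension $0 < \dim\PP^n - 1$ once $n \ge 2$. You are slightly more explicit about the maximality of the decomposition via Corollary~\ref{corollary-k-indecomposable}, which the paper leaves implicit, but the argument is the same.
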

\begin{proof}
By previous Lemma it is enough to show that $\PP^n$ has no Griffiths components. But by Example~\ref{example-pn}
there is a semiorthogonal decomposition of $\BD(\PP^n)$ with all components being derived categories of points.
Their geometrical dimension is zero, so as soon as $n \ge 2$, none of them is a Griffiths component.
\end{proof}

This ``birational invariant'' can be effectively used.
Consider, for example, a smooth cubic 3-fold $V_3$. As it was discussed in section~\ref{ssection-fano3},
it has a semiorthogonal decomposition with two exceptional objects and a category $\CA_{V_3}$ as components.

\begin{proposition}
If the set $\Griff(X)$ of Griffiths components is well defined then for a smooth cubic $3$-fold it is nonempty.
In particular, a smooth cubic $3$-fold is not rational.
\end{proposition}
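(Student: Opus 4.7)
The plan is to identify $\CA_{V_3}$ (from Section~\ref{ssection-fano3}) as a Griffiths component of $\BD(V_3)$, and then conclude by Corollary~\ref{corollary-rational-griffiths}. Recall that $\BD(V_3) = \langle \CA_{V_3}, \CO_{V_3}(-H), \CO_{V_3}\rangle$ and that $\CA_{V_3}$ is a fractional Calabi--Yau category of dimension $5/3$, i.e.\ $\SS^3_{\CA_{V_3}} \cong [5]$.

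The first step is to check that the above s.o.d.\ is already maximal, which amounts to showing that $\CA_{V_3}$ is indecomposable. I would argue along the lines of Proposition~\ref{proposition-cy-indecomposable}: given a hypothetical decomposition $\CA_{V_3} = \langle \CX, \CY \rangle$, one iterates the Serre-rotation rule $\langle \CX, \CY \rangle = \langle \SS_{\CA_{V_3}}(\CY), \CX \rangle$ and exploits $\SS^3 \cong [5]$ to force complete orthogonality of $\CX$ and $\CY$, then applies Corollary~\ref{corollary-orthogonal-indecomposability} to the connected category $\CA_{V_3}$. This is the step I expect to be the main obstacle, as it requires both a genuine upgrade of Proposition~\ref{proposition-cy-indecomposable} to the fractional Calabi--Yau setting and the connectedness $\HOH^0(\CA_{V_3}) = \kk$ (which is nontrivial since Hochschild cohomology is not additive along general s.o.d.); both would be imported from~\cite{K15}.

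Once indecomposability is granted, it remains to show $\gdim(\CA_{V_3}) \ge 2$. Suppose not; by Examples~\ref{example-gdim-0} and~\ref{example-gdim-1}, $\CA_{V_3}$ would be equivalent either to $\BD(\kk)$ or to $\BD(C)$ with $g(C) \ge 1$. Additivity of Hochschild homology (Theorem~\ref{theorem-hoh-additivity}) together with the HKR isomorphism (Theorem~\ref{theorem-hkr}) applied to the given s.o.d.\ yield
\[
\HOH_1(\CA_{V_3}) = \HOH_1(\BD(V_3)) = H^{1,2}(V_3) = \kk^5,
\]
which excludes $\BD(\kk)$ and forces $g(C) = 5$ in the curve case. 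But the Serre functor of $\BD(C)$ is $-\otimes \omega_C[1]$, so $\SS^3 \cong -\otimes\omega_C^{\otimes 3}[3]$; since $\omega_C^{\otimes 3}$ has degree $3(2g-2) = 24 \ne 0$ it is not trivial, so $\SS^3$ is not any pure shift and in particular not $[5]$, a contradiction.

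Hence $\gdim(\CA_{V_3}) \ge 2 = \dim V_3 - 1$, so $\CA_{V_3} \in \Griff(V_3)$ and $\Griff(V_3) \ne \emptyset$. Non-rationality of $V_3$ then follows from Corollary~\ref{corollary-rational-griffiths}.
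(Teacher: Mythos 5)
Your overall strategy is different from the paper's, and the difference matters. You argue bottom-up: first prove $\CA_{V_3}$ is indecomposable (so that the three-term s.o.d.\ is maximal), then show $\gdim(\CA_{V_3}) \ge 2$, and conclude $\CA_{V_3} \in \Griff(V_3)$. The paper instead argues top-down: it \emph{assumes} $\Griff(V_3) = \emptyset$, so that $\CA_{V_3}$ admits some (possibly nontrivial) s.o.d.\ with all components of geometric dimension $\le 1$, hence each a copy of $\BD(\kk)$ or $\BD(C_i)$ with $g(C_i)\ge 1$. The counts $\sum g(C_i) = \dim\HOH_1(\CA_{V_3}) = 5$ \emph{and} $m + 2k = \dim\HOH_0(\CA_{V_3}) = 2$ (you only use the first) then force exactly one curve of genus $5$ and nothing else, after which the Serre-functor contradiction finishes the argument. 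The crucial point is that the paper's route never needs to know whether $\CA_{V_3}$ is indecomposable; it is explicitly remarked right after the proof that indecomposability is ``not so easy to prove, but even without this the above argument works fine.''

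This makes your first step a genuine gap, not just a harder alternative. The Serre-rotation argument you sketch for upgrading Proposition~\ref{proposition-cy-indecomposable} does not transfer to the fractional Calabi--Yau setting. In the integral case one shows $\Hom(A,B)^\vee \cong \Hom(B, \SS_\CT A) = \Hom(B, A[n]) = 0$ directly, because $\SS_\CT$ preserves each component up to shift. When $\SS^3 \cong [5]$ but $\SS$ itself is not a shift, $\SS(\CX)$ is generally a \emph{different} admissible subcategory, not $\CX$ shifted, so iterating the rotation $\langle \CX,\CY\rangle \rightsquigarrow \langle \SS(\CY),\CX\rangle$ produces new decompositions but does not force $\Hom(\CX,\CY)=0$; there is no step where one component lands back inside itself. (And indeed the paper's Section~\ref{section-bad-news} exhibits a fractional Calabi--Yau category, the orthogonal of an exceptional object on a cubic surface, that \emph{is} decomposable, so no clean analogue of Proposition~\ref{proposition-cy-indecomposable} can hold in full generality.) Connectedness of $\CA_{V_3}$ is also nontrivial and not provided by additivity, as you note. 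So as written, the proposal's first step is unproven and its proposed mechanism fails.

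The rest of your argument is fine and in places more detailed than the paper: the computation $\HOH_1(\CA_{V_3}) = H^{1,2}(V_3) = \kk^5$ via HKR and additivity is exactly the paper's, and your explicit justification that $\SS_{\BD(C)}^3 = (-\otimes\omega_C^{\otimes 3})[3]$ cannot be a pure shift for $g(C)=5$ (since $\deg\omega_C^{\otimes 3} = 24 \ne 0$) usefully fills in what the paper leaves as ``clearly does not have this property.'' If you restructure to the paper's contradiction argument — decompose $\CA_{V_3}$ arbitrarily into gdim-$\le 1$ pieces and use both $\HOH_0$ and $\HOH_1$ — you can keep that part and drop the problematic indecomposability step entirely.
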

\begin{proof}
If the set of Griffiths components of a smooth cubic 3-fold would be empty, then the category $\CA := \CA_{V_3}$ should have
a semiorthogonal decomposition with components of geometrical dimension at most 1, i.e.\ by Examples~\ref{example-gdim-0}
and~\ref{example-gdim-1} with components $\BD(\kk)$ and $\BD(C_i)$ with $g(C_i) \ge 1$. To show this is not true we will use
Hochschild homology. 

Recall that by HKR isomorphism the Hochschild homology of $\BD(V_3)$ can be computed in terms of its Hodge numbers.
On the other hand, by Griffiths residue Theorem the Hodge diamond of $V_3$ looks as
\begin{equation*}
{\tiny\xymatrix@=-1ex{
&&& 1 \\
&& 0 && 0 \\
& 0 && 1 && 0 \\
0 && 5 && 5 && 0\\
& 0 && 1 && 0 \\
&& 0 && 0 \\
&&& 1 
}}
\end{equation*}
hence $\HOH_\bullet(\BD(V_3)) = \kk^5[1] \oplus \kk^4 \oplus \kk^5[-1]$. To obtain the category $\CA_{V_3}$ we split off
two exceptional bundles, hence by additivity of Hochschild homology (Theorem~\ref{theorem-hoh-additivity}) we get 
\begin{equation*}
\HOH_\bullet(\CA_{V_3}) \cong \kk^5[1] \oplus \kk^2 \oplus \kk^5[-1].
\end{equation*}
Now assume $\CA_{V_3}$ has a semiorthogonal decomposition with $m$ components equivalent to $\BD(\kk)$ 
and $k$ components equivalent to $\BD(C_1)$, \dots, $\BD(C_k)$. Then by additivity of Hochschild
homology and Example~\ref{example-hoh-dim-0-1} we get
%
\begin{equation*}
\sum_{i=1}^k g(C_i) = \dim \HOH_1(\CA) = 5,
\qquad
m + 2k = \dim \HOH_0(\CA) = 2.
\end{equation*}
It follows from the first that $k \ge 1$ and from the second $k \le 1$. Hence $k = 1$, $g(C_1) = 5$, and $m = 0$.
So, the only possibility is if $\CA \cong \BD(C)$ with $g(C) = 5$. But $\SS_{\CA}^3 \cong [5]$, while $\SS_{\BD(C)}$
clearly does not have this property. 
\end{proof}

The same argument works for $X_{14}$, $X_{10}$, $dS_4$, $V_{2,3}$, $V_6^{1,1,1,2,3}$, $V_4$, and $dS_6$, i.e.\ for all prime Fano 3-folds
with the exception of those which are known to be rational, and $V_{2,2,2}$. For the last one we need another way to check that
an equivalence $\CA_{V_{2,2,2}} \cong \BD(C)$ is impossible. 
One of the possibilities is by comparing the Hochschild cohomology of these categories.

\begin{remark}
Most probably, the category $\CA_{V_3}$ (as well as the other similar categories) is indecomposable. However,
this is not so easy to prove, but even without this the above argument works fine.
\end{remark}

\subsection{Bad news}\label{section-bad-news}

Unfortunately, Griffiths components are not well defined. To show this we will exhibit a contradiction 
with Corollary~\ref{corollary-rational-griffiths} by constructing an admissible subcategory of geometric 
dimension greater than 1 in a rational threefold. The construction is based on the following interesting 
category discovered by Alexei Bondal in 90's.

%
%
%
%
%

Consider the following quiver with relations
\begin{equation}\label{bq}
Q = \left(\left. \xymatrix@C=3em{ 
\bullet \ar@<1ex>[r]^{\alpha_1} \ar@<-1ex>[r]_{\alpha_2} & 
\bullet \ar@<1ex>[r]^{\beta_1} \ar@<-1ex>[r]_{\beta_2} & 
\bullet 
} \right|\
\beta_1\alpha_2 = \beta_2\alpha_1 = 0
\right).
\end{equation}
As any oriented quiver it has a full exceptional collection
\begin{equation}\label{equation-sod-bondal-quiver}
\BD(Q) = \langle P_1,P_2,P_3 \rangle
\end{equation}
with $P_i$ being the projective module of the $i$-th vertex. On the other hand,
it has an exceptional object
\begin{equation}\label{oe}
E = \left( \xymatrix@C=3em{ 
\kk \ar@<1ex>[r]^{1} \ar@<-1ex>[r]_{0} & 
\kk \ar@<1ex>[r]^{1} \ar@<-1ex>[r]_{0} & 
\kk
} \right).
\end{equation}
It gives a semiorthogonal decomposition 
\begin{equation*}
\BD(Q) = \langle \CA_0, E \rangle,
\end{equation*}
and it is interesting that the category $\CA_0 := E^\perp$ has no exceptional objects. Indeed, a straightforward computation shows
that the Euler form on $K_0(\CA_0)$ is skew-symmetric, and so in $K_0(\CA_0)$ there are no vectors of square 1.

In particular, any indecomposable admissible subcategory of $\CA_0$ has geometric dimension greater than~1.
Indeed, by Example~\ref{example-gdim-0} and the above argument we know that $\CA_0$ has no admissible subcategories
of geometric dimension 0. Furthermore, if it would have an admissible subcategory of geometric dimension 1, then by~\eqref{example-gdim-1}
this subcategory should be equivalent to $\BD(C)$ with $g(C) \ge 1$. But then by additivity of Hochschild homology 
$\dim\HOH_1(\CA_0) \ge \dim\HOH(\BD(C)) \ge g(C) \ge 1$, 
which contradicts $\HOH_1(\BD(Q)) = 0$ again by additivity and by~\eqref{equation-sod-bondal-quiver}.

It remains to note that by~\cite{K13} there is a rational 3-fold $X$ with $\BD(Q)$ (and a fortiori $\BD(\CA_0)$) being a semiorthogonal
component of $\BD(X)$. 

\begin{remark}
The same example shows that semiorthogonal decompositions do not satsfy Jordan--H\"older property.
\end{remark}

However, as one can see on the example of prime Fano 3-folds, there is a clear correlation between
appearance of nontrivial pieces in derived categories and nonrationality. Most probably, the notion
of a Griffiths component should be redefined in some way, and then it will be a birational invariant.
One of the possibilities is to include fractional Calabi--Yau property into the definition.
Another possibility is to consider minimal categorical resolutions of singular varieties
as geometric categories and thus redefine the notion of the geometric dimension.
Indeed, as we will see now the category $\CA_0$ is of this nature.

\subsection{Minimal categorical resolution of a nodal curve}

The category $\CA_0$ from the previous section has a nice geometric interpretation.
It is a minimal categorical resolution of singularities of a rational nodal curve.

Let $C_0 := \{ y^2z = x^2(x-z) \} \subset \PP^2$ be a rational nodal curve and denote by $\bp_0 \in C_0$ its node. The normalization map
$C \cong \PP^1 \xrightarrow{\ \nu\ } C_0$ is not a ``categorical resolution'', since $\nu_*\CO_C \not\cong \CO_{C_0}$
and hence the pullback functor $L\nu^*$ from the derived category of $C_0$ to the derived category of $C$ is not fully faithful.
However, following the recipe of~\cite{KL}, one can construct a categorical resolution of $C_0$ by gluing $\BD(C)$ with $\BD(\bp_0) \cong \BD(\kk)$
along $\nu^{-1}(\bp_0) = \{ \bp_1, \bp_2 \}$. The resulting category is equivalent to the derived category $\BD(Q)$ of Bondal's quiver.

Indeed, the subcategory of $\BD(Q)$ generated by the first two vertices (or, equivalently, by the first two projective modules $P_1$ and $P_2$)
of the quiver is equivalent to $\BD(\PP^1) \cong \BD(C)$, so that the two arrows $\alpha_1$ and $\alpha_2$ between the first two vertices correspond to the homogeneous
coordinates $(x_1:x_2)$ on $\PP^1$. The third vertex gives $\BD(\bp_0)$, and the two arrows between the second and the third
vertex of $Q$ correspond to the points $\bp_1$ and~$\bp_2$. Finally, the relations of the quiver follow from 
$x_1(\bp_2) = x_2(\bp_1) = 0$, the vanishing of (suitably chosen) homogeneous coordinates of $\PP^1$ at points $\bp_1$ and $\bp_2$.

The resolution functor takes an object $F \in \BD(C_0)$ to the representation of $Q$ defined by
\begin{equation*}
(H^\bullet(C,L\nu^*F\otimes\CO_C(-1)), H^\bullet(C,L\nu^*F), Li_0^*F),
\end{equation*}
where $i_0 : \bp_0 \to C_0$ is the embedding of the node. It is easy to see that its image is contained
in the subcategory $\CA_0 = P^\perp \subset \BD(Q)$, so the latter can be considered as a categorical resolution of $C_0$ as well.

If we consider the category $\CA_0$ as a geometric category of dimension 1, so that $\gdim(\CA_0) = 1$, then $\CA_0$
is no longer a Griffiths component of the rational threefold $X$ from section~\ref{section-bad-news}, and this is no longer
a counterexample.


\section{Higher dimensional varieties}

Probably, the most interesting (from the birational point of view) example of a 4-dimensional variety is a cubic fourfold.
There are examples of cubic fourfolds which are known to be rational, but general cubic fourfolds are expected to be nonrational.
In this section we will discuss how does rationality of cubic fourfolds correlate with the structure of their derived categories.

\subsection{A noncommutative K3 surface associated with a cubic fourfold}

A cubic fourfold is a hypersurface $X \subset \PP^5$ of degree 3. By adjunction we have
\begin{equation*}
K_X = -3H,
\end{equation*}
where $H$ is the class of a hyperplane in $\PP^5$. So, $X$ is a Fano 4-fold of index 3.
Therefore, by Example~\ref{example-fano-index-r} we have a semiorthogonal decomposition
\begin{equation}\label{equation-sod-cubic-4}
\BD(X) = \langle \CA_X, \CO_X(-2H), \CO_X(-H), \CO_X \rangle.
\end{equation}
In fact, this is a maximal semiorthogonal decomposition.

\begin{proposition}
The category $\CA_X$ is a connected Calabi--Yau category of dimension $2$ with Hochschild homology
isomorphic to that of K$3$ surfaces. In particular, $\CA_X$ is indecomposable
and~\eqref{equation-sod-cubic-4} is a maximal semiorthogonal decomposition.
\end{proposition}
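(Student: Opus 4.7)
The plan is to establish three things: (i) $\HOH_\bullet(\CA_X)$ is isomorphic to the Hochschild homology of a K3 surface, (ii) $\SS_{\CA_X} \cong [2]$, (iii) $\HOH^0(\CA_X) = \kk$. The indecomposability of $\CA_X$ and maximality of \eqref{equation-sod-cubic-4} will then follow formally from the general results already collected in the paper.

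First, for (i), the Hodge diamond of a smooth cubic fourfold $X \subset \PP^5$ is standard: by the Lefschetz hyperplane theorem together with a Griffiths residue computation of the primitive middle cohomology, the only nonzero Hodge numbers are $h^{p,p} = 1,1,21,1,1$ on the diagonal and $h^{1,3} = h^{3,1} = 1$ off it. Theorem~\ref{theorem-hkr} then gives $\HOH_\bullet(\BD(X)) = \kk[2] \oplus \kk^{25} \oplus \kk[-2]$. Each of the three line-bundle summands in \eqref{equation-sod-cubic-4} generates a copy of $\BD(\kk)$ with $\HOH_\bullet = \kk$ (Example~\ref{example-hoh-dim-0-1}), so Theorem~\ref{theorem-hoh-additivity} yields
\[
\HOH_\bullet(\CA_X) \cong \kk[2] \oplus \kk^{22} \oplus \kk[-2],
\]
which is precisely the Hochschild homology of a K3 surface as computed via HKR.

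For (ii), write $i : \CA_X \hookrightarrow \BD(X)$ for the inclusion with right adjoint $i^!$, so that $\SS_{\CA_X}(F) = i^!\bigl(F \otimes \CO_X(-3H)[4]\bigr)$. The idea is to exploit the numerological coincidence that the Fano index $r=3$ equals the length of the line-bundle block in \eqref{equation-sod-cubic-4}. Applying the Serre-mutation equivalence $\CB^\perp \xrightarrow{\SS_X} {}^\perp\CB$ three times produces a mutated decomposition
\[
\BD(X) = \langle \CO_X(-5H)[4],\ \CO_X(-4H)[4],\ \CO_X(-3H)[4],\ \CA_X \rangle,
\]
in which $i^!$ is realized as right-projection onto the last component. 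Comparing this with the SODs obtained from \eqref{equation-sod-cubic-4} by repeatedly tensoring with the autoequivalence $-\otimes\CO_X(H)$, the cyclic action on the three line bundles allows one to identify the right-projection of $F \otimes \CO_X(-3H)[4]$ onto $\CA_X$ with $F[2]$, giving $\SS_{\CA_X} \cong [2]$.

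With (ii) in hand, Lemma~\ref{lemma-hoh-cy} gives $\HOH^\bullet(\CA_X) \cong \HOH_\bullet(\CA_X)[-2]$, so (i) yields $\HOH^0(\CA_X) = \HOH_{-2}(\CA_X) = \kk$, establishing (iii). Proposition~\ref{proposition-cy-indecomposable} then gives indecomposability of $\CA_X$, and the three other components of \eqref{equation-sod-cubic-4} are each equivalent to $\BD(\kk)$ and hence indecomposable by Corollary~\ref{corollary-k-indecomposable}, proving that \eqref{equation-sod-cubic-4} is a maximal semiorthogonal decomposition.

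The main obstacle is step (ii). Steps (i) and (iii) are essentially bookkeeping with HKR and additivity, but the Calabi--Yau identification $\SS_{\CA_X} \cong [2]$ requires genuine mutation analysis. The key input is the coincidence $\omega_X^{-1} = \CO_X(rH)$ with $r$ equal to the number of line-bundle components of \eqref{equation-sod-cubic-4}; without this balance the Serre functor would not descend to a pure shift on $\CA_X$. Tracking shifts and twists simultaneously through the mutation sequence is the delicate part of the argument.
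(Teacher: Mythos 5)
Steps (i), (iii), and the indecomposability conclusions follow the paper's own route exactly: HKR (Theorem~\ref{theorem-hkr}) plus the Hodge diamond of $X$ gives $\HOH_\bullet(\BD(X)) = \kk[2]\oplus\kk^{25}\oplus\kk[-2]$; additivity (Theorem~\ref{theorem-hoh-additivity}) strips off the three exceptional line bundles to give $\HOH_\bullet(\CA_X) = \kk[2]\oplus\kk^{22}\oplus\kk[-2]$; and once the $2$-Calabi--Yau property is granted, Lemma~\ref{lemma-hoh-cy} gives $\HOH^0(\CA_X)\cong\HOH_{-2}(\CA_X)=\kk$, so $\CA_X$ is connected, and Proposition~\ref{proposition-cy-indecomposable} together with Corollary~\ref{corollary-k-indecomposable} finishes the maximality claim. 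All of this matches the paper.

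The gap is step (ii), the assertion $\SS_{\CA_X}\cong[2]$ --- and this is exactly the claim the paper itself does \emph{not} prove; it calls it ``a bit technical'' and refers to~\cite{K04} and~\cite{K15}. Your setup is fine up to the point where you obtain $\BD(X)=\langle\CO_X(-5H),\CO_X(-4H),\CO_X(-3H),\CA_X\rangle$ by Serre-mutating the block of line bundles past $\CA_X$, and where you recognize $\SS_{\CA_X}(F)=i^!\bigl(F\otimes\CO_X(-3H)[4]\bigr)$. But the ensuing sentence --- that ``the cyclic action on the three line bundles allows one to identify the right-projection of $F\otimes\CO_X(-3H)[4]$ onto $\CA_X$ with $F[2]$'' --- is not an argument. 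The coincidence that the index $r=3$ equals the number of exceptional line bundles only tells you that $\SS_X$ maps the span $\langle\CO_X(-2H),\CO_X(-H),\CO_X\rangle$ to a twist of itself, so that after mutation you land on an abstract copy of $\CA_X$; it carries no information whatsoever about which shift appears. Concretely, what you need is: for every $F\in\CA_X$ there is a distinguished triangle $F[-2]\to F\otimes\CO_X(-3H)\to C$ with $C\in\langle\CO_X(-5H),\CO_X(-4H),\CO_X(-3H)\rangle$. That is a nontrivial homological statement, and in~\cite{K04} and~\cite{K15} it is extracted from a Koszul-type resolution computation tracking the cohomology of the relevant twists of $\CO_X$; it does not fall out of the numerology. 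Your own closing remark --- ``Tracking shifts and twists simultaneously through the mutation sequence is the delicate part of the argument'' --- names the gap; it does not close it. As written, step (ii) is a plausibility sketch, not a proof, of the structural heart of the proposition.
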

\begin{proof}
The proof of the fact that $\SS_{\CA_X} \cong [2]$ is a bit technical, it can be found in~\cite{K04} and~\cite{K15}.
The Hochschild homology computation, on a contrary, is quite simple. 
By Lefschetz hyperplane Theorem and Griffiths residue Theorem, the Hodge diamond of $X$ looks as
\begin{equation*}
\tiny{\xymatrix@=-1ex{
&&&& 1 \\
&&& 0 && 0 \\
&& 0 && 1 && 0 \\
& 0 && 0 && 0 && 0 \\
0 && 1 && 21 && 1 && 0 \\
& 0 && 0 && 0 && 0 \\
&& 0 && 1 && 0 \\
&&& 0 && 0 \\
&&&& 1 
}}
\end{equation*}
%
%
Thus by HKR isomorphism we have $\HOH_\bullet(\BD(X)) = \kk[2] \oplus \kk^{25} \oplus \kk[-2]$.
Since $\CA_X$ is the orthogonal complement of an exceptional triple in the category $\BD(X)$, 
by additivity of Hochschild homology (Theorem~\ref{theorem-hoh-additivity}) it follows that
\begin{equation*}
\HOH_\bullet(\CA_X) = \kk[2] \oplus \kk^{22} \oplus \kk[-2].
\end{equation*}
By HKR isomorphism this coincides with the dimensions of Hochschild homology of K3 surfaces.

Since $\CA_X$ is a 2-Calabi--Yau category there is an isomorphism $\HOH_i(\CA_X) \cong \HOH^{i+2}(\CA_X)$. Therefore,
from the above description of Hochschild homology it follows that $\HOH^0(\CA_X)$ is one-dimensional, i.e., the category $\CA_X$ is connected
(alternatively, $\HOH^0(\CA_X)$ can be easily computed via the technique of~\cite{K12}). 

Indecomposability of $\CA_X$ then follows from Proposition~\ref{proposition-cy-indecomposable}.
The components generated by exceptional objects are indecomposable by Corollary~\ref{corollary-k-indecomposable}.
\end{proof}

Being Calabi--Yau category of dimension 2, the nontrivial component $\CA_X$ of $\BD(X)$ can be considered as 
a noncommutative K3 surface. As we will see soon, for some special cubic fourfolds $X$ there are equivalences
$\CA_X \cong \BD(S)$ for appropriate K3 surfaces. Thus $\CA_X$ is indeed a deformation of $\BD(S)$ for a K3 surface~$S$.

Clearly, if $\CA_X \cong \BD(S)$ then $\gdim(\CA_X) = 2$ and pretending that the set of Griffiths components $\Griff(X)$
is well defined, it follows that it is empty.
On the other hand, if $\CA_X \not\cong \BD(S)$ then $\gdim(\CA_X) > 2$ by Conjecture~\ref{conjecture-cydim}, 
and so $\CA_X$ is a Griffiths component.
This motivates the following

\begin{conjecture}\label{conjecture-cubic4}
A cubic fourfold $X$ is rational if and only if there is a smooth projective K$3$ surface $S$ and an equivalence $\CA_X \cong \BD(S)$.
\end{conjecture}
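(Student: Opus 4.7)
The plan is to split the equivalence into its two directions and attack each separately, although I should say upfront that this is a conjecture still open at the time of writing, so the ``proof'' I can sketch is really a program. The easier direction to outline (but not to complete) is that rationality of $X$ should imply the existence of a K3 surface $S$ with $\CA_X \cong \BD(S)$. The strategy is to exploit Weak Factorization: any birational map $X \dashrightarrow \PP^4$ factors as a chain of blowups and blowdowns with smooth centers. By Orlov's blowup formula (Theorem~\ref{theorem-sod-blowup}), each such step modifies the derived category only by inserting or removing derived categories of smooth projective varieties of dimension at most $\dim X - 2 = 2$. Starting from $\BD(\PP^4)$, which has a full exceptional collection, and iterating, one obtains a semiorthogonal decomposition of $\BD(X)$ whose pieces are admissible subcategories sitting inside derived categories of points, curves and surfaces. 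On the other hand, the decomposition~\eqref{equation-sod-cubic-4} contains the connected Calabi--Yau-2 piece $\CA_X$, which is indecomposable by Proposition~\ref{proposition-cy-indecomposable}. The CY-2 property together with the Hochschild homology $\HOH_\bullet(\CA_X) = \kk[2]\oplus\kk^{22}\oplus\kk[-2]$ should force $\CA_X$ to be of the form $\BD(S)$ with $S$ a K3 surface (an abelian surface being excluded by the Hodge-type numerics), via a refinement of Conjecture~\ref{conjecture-cydim}.

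For the converse direction, assuming $\CA_X \cong \BD(S)$ for some K3 surface $S$, the idea is to mimic the known rationality constructions: Pfaffian cubic fourfolds (Beauville--Donagi), cubics containing a plane with an associated K3 via the discriminant covering of the quadric fibration~\eqref{equation-sod-quadric-bundle}, and Hassett's divisors of special cubic fourfolds. In each of these cases $S$ comes equipped with extra geometric data (a universal family of rational curves on $X$, or an incidence correspondence with a Grassmannian) that is used to cut $X$ birationally into pieces identifiable with a projective space. A categorical approach would be to use the conjectural equivalence to build moduli spaces of stable objects in $\CA_X$ with appropriate Mukai vector, interpret them geometrically as loci in $X$, and use the resulting universal family to produce the birational map to $\PP^4$.

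The hard part is, of course, each direction. For the first, the difficulty is that the set of Griffiths components is not well defined (Section~\ref{section-bad-news}), so tracking $\CA_X$ through the blowup/blowdown sequence of Weak Factorization is not straightforward; one really needs a strong form of Conjecture~\ref{conjecture-cydim} asserting that any connected CY-2 admissible subcategory of the derived category of a smooth projective surface equals the derived category of a minimal K3 or abelian surface. For the second direction, the obstacle is genuinely geometric: there is no known procedure for extracting a birational map $X \dashrightarrow \PP^4$ from an abstract equivalence $\CA_X \cong \BD(S)$, and the known rational examples rely on ad hoc geometric data that may or may not be recoverable purely from the categorical equivalence. Settling either implication would therefore require substantial new input beyond the formalism developed so far.
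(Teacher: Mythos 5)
This statement is a \emph{conjecture}, not a theorem: the paper offers no proof, only heuristic motivation and supporting evidence. Your write-up correctly recognizes this and reproduces essentially the same motivating argument as the paper. In one direction you invoke Weak Factorization, Orlov's blowup formula (Theorem~\ref{theorem-sod-blowup}), the indecomposability and $2$-Calabi--Yau structure of $\CA_X$, and a strengthened Conjecture~\ref{conjecture-cydim}; this is exactly the Griffiths-component heuristic the paper uses in the paragraph immediately preceding the conjecture, and you rightly flag (as the paper does in \S\ref{section-bad-news}) that Griffiths components are not currently well defined, so this is a program rather than a proof. Your added observation that an abelian surface is excluded by the Hochschild homology count $\HOH_\bullet(\CA_X)=\kk[2]\oplus\kk^{22}\oplus\kk[-2]$ is a correct refinement the paper leaves implicit: an abelian surface would contribute nonzero $\HOH_{\pm 1}$. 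For the converse direction you correctly identify the known evidence (Pfaffian cubics via Theorem~\ref{theorem-pfaffian-equivalence}, cubics containing a plane with the twisted K3 obstruction) and that no general mechanism for extracting a birational map from a categorical equivalence is known. In short, there is no gap to flag relative to the paper, because the paper itself does not claim a proof; you have faithfully reconstructed the motivation and honestly delimited what is open.
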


We will show that this Conjecture agrees perfectly with the known cases of rational cubic fourfolds.

\subsection{Pfaffian cubics}

Consider a $6\times 6$ skew-symmetric matrix with entries being linear forms on $\PP^5$.
Its determinant is the square of a degree 3 homogeneous polynomial in coordinates on $\PP^5$,
called the Pfaffian of the matrix. The corresponding cubic hypersurface is called a {\sf pfaffian cubic}.

In other words, if $\PP^5 = \PP(V)$ with $\dim V = 6$, then a cubic hypersurface $X \subset \PP(V)$ is pfaffian
if there is another 6-dimensional vector space $W$ and a linear map $\varphi:V \to \Lambda^2W^\vee$ such that
$X = \varphi^{-1}(\Pf)$, where $\Pf \subset \PP(\Lambda^2W^\vee)$ is the locus of degenerate skew-forms.
In what follows we will assume that $X$ is not a cone (hence $\varphi$ is an embedding), and that the subspace $\varphi(V) \subset \Lambda^2W^\vee$ 
does not contain skew-forms of rank $2$ (i.e., $\PP(V) \cap \Gr(2,W^\vee) = \emptyset$). To unburden notation we consider
$V$ as a subspace of $\Lambda^2W^\vee$.

\begin{remark}\label{remark-pfaffian-delpezzo}
There is a geometric way to characterize pfaffian cubics. First, assume a cubic $X$ is pfaffian and is given
by a subspace $V \subset \Lambda^2W^\vee$. Choose a generic hyperplane $W_5 \subset W$ and consider 
the set $R_{W_5} \subset X$ of all degenerate skew-forms in $V$ the kernel of which is contained in $W_5$. 
It is easy to see that the kernel of a rank 4 skew-form is contained in $W_5$ if and only if its restriction to $W_5$ is decomposable.
Note also that such restriction
is never zero (since we assumed there are no rank 2 forms in $V$), hence the composition
\begin{equation*}
\varphi_{W_5}: V \hookrightarrow \Lambda^2W^\vee \twoheadrightarrow \Lambda^2W_5^\vee
\end{equation*}
is an embedding.
Therefore, $R_{W_5} = \varphi_{W_5}(\PP(V)) \cap \Gr(2,W_5)$.
For generic choice of $W_5$ this intersection is dimensionally transverse and smooth, hence 
$R_{W_5}$ is a quintic del Pezzo surface in $X$.
Vice versa, a cubic fourfold $X$ containing a quintic del Pezzo surface is pfaffian (see~\cite{Bea00}).
\end{remark}

If $X$ is a pfaffian cubic fourfold, one can associate with $X$ a K3 surface as follows. 
Consider the annihilator $V^\perp \subset \Lambda^2W$ of $V \subset \Lambda^2W^\vee$ and define
\begin{equation}\label{equation-pfaffian-k3}
S = \Gr(2,W) \cap \PP(V^\perp).
\end{equation}
One can check the following

\begin{lemma}
A pfaffian cubic $X$ is smooth if and only if the corresponding intersection~\eqref{equation-pfaffian-k3} is dimensionally transverse and smooth,
i.e.\ $S$ is a K$3$ surface.
\end{lemma}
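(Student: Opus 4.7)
The strategy is to translate both smoothness of $X$ and (dimensional transversality plus smoothness) of $S$ into the same linear-algebraic statement about pairs $([\omega], U)$ with $\omega \in V$ and $U = \ker(\omega) \subset W$, and then apply adjunction to identify $S$ as a K3. The no-rank-$2$ hypothesis $\PP(V) \cap \Gr(2,W^\vee) = \emptyset$ will play a decisive role by forcing every degenerate form in $V$ to have rank exactly $4$. First I would analyze $X$. The Pfaffian hypersurface $\Pf \subset \PP(\Lambda^2 W^\vee)$ is cut out by $\omega^3 = 0$, and its singular locus is precisely the rank-$2$ locus $\Gr(2,W^\vee)$; the hypothesis thus ensures every $[\omega] \in X$ has rank exactly $4$ and that $\Pf$ is already smooth at $[\omega]$. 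Singularity of $X$ at $[\omega]$ is therefore equivalent to failure of transversality of $\PP(V)$ and $\Pf$, which upon differentiating $\omega^3$ becomes the condition $\omega^2 \wedge \eta = 0$ for all $\eta \in V$. A short calculation in a basis diagonalizing $\omega$ (with $\ker(\omega)$ spanned by two of the basis vectors) shows this is the same as $\eta$ vanishing on the $2$-plane $\ker(\omega)$. So $X$ is singular at $[\omega]$ iff $\ker(\omega)$ is isotropic for every $\eta \in V$, iff $[\ker(\omega)] \in S$.

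Next I would analyze the non-transverse points of $S$. At $[\Lambda] = [u\wedge v] \in \Gr(2,W)$ with $U = \langle u,v\rangle$, the affine tangent space of $\Gr(2,W) \subset \Lambda^2 W$ is $U \wedge W$, and transversality of $S = \Gr(2,W) \cap \PP(V^\perp)$ at $[\Lambda]$ is the condition $(U \wedge W) + V^\perp = \Lambda^2 W$. Dualizing, this is equivalent to $V \cap (U \wedge W)^\perp = 0$ in $\Lambda^2 W^\vee$, and unwinding the pairing $\langle \omega, u'\wedge v'\rangle = \omega(u',v')$ identifies $(U \wedge W)^\perp = \{\omega \in \Lambda^2 W^\vee : U \subset \ker(\omega)\}$. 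So $S$ fails transversality at $[U]$ iff there is a nonzero $\omega \in V$ with $U \subset \ker(\omega)$; the no-rank-$2$ assumption then forces such $\omega$ to have rank exactly $4$ and $\ker(\omega) = U$. The two analyses match exactly: the assignment $[\omega] \mapsto [\ker(\omega)]$ gives a bijection between singular points of $X$ and non-transverse points of $S$. Hence $X$ is smooth iff $S$ is everywhere transverse, i.e.\ smooth of expected dimension $\dim\Gr(2,W) + \dim\PP(V^\perp) - 14 = 2$.

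It remains to verify that a smooth, dimensionally-transverse $S$ is a K3 surface. By adjunction, since $\omega_{\Gr(2,W)} \cong \CO(-6)$ and $S$ is cut out by $6$ Plücker hyperplanes, one has $\omega_S \cong \CO_S(-6+6) \cong \CO_S$; together with $h^1(S,\CO_S) = 0$ from a Lefschetz-type argument on complete intersections in $\Gr(2,W)$, this identifies $S$ as K3. The main delicate point I anticipate is keeping the two tangent-space computations honest and ensuring that the same $\omega$ witnesses both the non-smoothness of $X$ at $[\omega]$ and the non-transversality of $S$ at $[\ker(\omega)]$, but the symmetry between $V$ and $V^\perp$ under the duality pairing makes the correspondence $[\omega] \leftrightarrow [\ker(\omega)]$ transparent once one picks coordinates.
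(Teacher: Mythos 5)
The paper states this lemma with the preamble ``One can check the following'' and gives no proof, so there is nothing to compare against; your argument is a correct and natural way to fill this gap. The two tangent-space computations are right: for $X$, smoothness of $\Pf$ at a rank-$4$ form $\omega$ plus the identification of $d\Pf|_\omega$ with $\eta\mapsto\omega^2\wedge\eta$ reduce singularity of $X$ at $[\omega]$ to $\eta|_{\ker\omega}=0$ for all $\eta\in V$, i.e.\ $[\ker\omega]\in S$; for $S$, dualizing $(U\wedge W)+V^\perp=\Lambda^2W$ gives exactly $V\cap\{\omega:U\subset\ker\omega\}=0$, and the no-rank-$2$ hypothesis forces any witness $\omega$ to satisfy $\ker\omega=U$. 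The two conditions match, and since degenerate forms in $V$ are precisely the points of $X$, each direction of the equivalence follows. One small imprecision: the map $[\omega]\mapsto[\ker\omega]$ need not be injective on the singular locus (several forms in $V$ could share a kernel), so ``bijection'' overstates things slightly --- but only the existence statement in each direction is used, so the proof is unaffected. The adjunction computation $\omega_S\cong\CO_S(-6+6)\cong\CO_S$ and $h^1(S,\CO_S)=0$ (by Kodaira vanishing applied iteratively to the linear sections of $\Gr(2,6)$) correctly identify $S$ as a K3 surface once transversality and smoothness are established.
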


Any pfaffian cubic is rational. One of the ways to show this is the following.

By definition, the points of $X \subset \PP(V)$ correspond to skew-forms on $W$ of rank 4. Each such form
has a two-dimensional kernel space $K \subset W$. Considered in a family, they form a rank 2 vector subbundle
$\CK \subset W \otimes \CO_X$. Its projectivization $\PP_X(\CK)$ comes with a canonical map to $\PP(W)$.
On the other hand, the K3 surface $S$ defined by~\eqref{equation-pfaffian-k3} is embedded into $\Gr(2,W)$,
hence carries the restriction of the rank 2 tautological bundle of the Grassmanian, i.e.\ a subbundle 
$\CU \subset W \otimes \CO_S$. Its projectivization $\PP_S(\CU)$ also comes with a canonical map to $\PP(W)$.

\begin{proposition}
The map $p:\PP_X(\CK) \to \PP(W)$ is birational and the indeterminacy locus of the inverse map
coincides with the image of the map $q:\PP_S(\CU) \to \PP(W)$.
\end{proposition}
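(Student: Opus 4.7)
The plan is to verify birationality of $p$ by explicitly inverting it on a dense open subset of $\PP(W)$, and then to identify the indeterminacy locus of $p^{-1}$ with the image of $q$ through a linear-algebra analysis of the contraction pairing $V \times W \to W^\vee$.

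First I would note that both $\PP_X(\CK)$ and $\PP(W)$ are smooth projective of dimension~$5$, and $p$ is a proper morphism (being the restriction of the projection $X \times \PP(W) \to \PP(W)$), so birationality reduces to checking that $p^{-1}(w)$ is a single reduced point for generic $w \in \PP(W)$. A preimage is a pair $(\omega, [w])$ with $\omega \in V$ a nonzero rank-$4$ skew-form whose kernel contains $w$, i.e.\ $\iota_w\omega = 0$. I would then study the contraction
\begin{equation*}
\mu_w : V \longrightarrow W^\vee, \qquad \omega \mapsto \omega(w,-),
\end{equation*}
whose image always lies in the $5$-dimensional subspace $\langle w\rangle^\perp$, so that $\dim \ker\mu_w \ge 1$ for every $w$.

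For generic $w$ I expect $\dim\ker\mu_w = 1$ (a genericity statement on a map of vector bundles of ranks $6$ and $5$), and the unique generator $\omega$ automatically has rank $\le 4$ and is nonzero; rank~$2$ is forbidden by the standing hypothesis $\PP(V)\cap\Gr(2,W^\vee)=\emptyset$, so $\rank(\omega)=4$, giving $\omega\in X$ and $w\in\CK_\omega$. Birationality of $p$ follows. Next, since $p$ is proper and birational between smooth $5$-folds, $p^{-1}$ fails to extend precisely where the fiber $p^{-1}(w)$ is positive-dimensional, equivalently where $\dim\ker\mu_w\ge 2$ (rank-$2$ forms being excluded from $V$, every nonzero element of $\ker\mu_w$ is genuinely of rank $4$, so the fiber is the full projectivization of $\ker\mu_w$). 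By duality,
\begin{equation*}
\dim\{w'\in W \mid \omega(w,w')=0 \ \forall\omega\in V\} \;=\; 6-\dim\mu_w(V) \;=\; \dim\ker\mu_w,
\end{equation*}
and this annihilator always contains $\langle w\rangle$. Thus $\dim\ker\mu_w\ge 2$ if and only if there exists $w'\in W$ with $U:=\langle w,w'\rangle$ two-dimensional and $\omega|_U = 0$ for every $\omega\in V$, i.e.\ $\Lambda^2 U\in V^\perp$. This is exactly the condition $U\in\Gr(2,W)\cap\PP(V^\perp)=S$ with $w\in U$, which characterizes the image of $q$.

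The most delicate point I would treat carefully is the second paragraph, namely confirming that the unique generator of $\ker\mu_w$ has rank exactly $4$ on the locus of generic injectivity. This relies essentially on the nondegeneracy hypothesis excluding rank-$2$ forms from $V$; without it, one could pick up spurious components in the fibers of $p$, and the clean identification of the indeterminacy locus of $p^{-1}$ with the image of $q$ would break down. Everything else amounts to transparent bookkeeping with the contraction pairing and standard properties of proper birational morphisms of smooth varieties.
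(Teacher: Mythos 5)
Your proof takes essentially the same approach as the paper's: both study the contraction $\mu_w\colon V \hookrightarrow \Lambda^2 W^\vee \to w^\perp \subset W^\vee$, observe that $p^{-1}(w)=\PP(\ker\mu_w)$, and identify the locus where $\dim\ker\mu_w\ge 2$ with the existence of a two-plane $U\ni w$ satisfying $\Lambda^2 U \subset V^\perp$, i.e.\ the image of $q$. You spell out two points the paper leaves implicit --- that the no-rank-$2$ hypothesis is what makes every nonzero kernel element land in $X$, and the duality count $\dim\ker\mu_w = 6-\dim\mu_w(V)$ --- but the argument is the same.
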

\begin{proof}
The proof is straightforward. Given $w \in \PP(W)$ its preimage in $\PP_X(\CK)$ is the set of all skew-forms in $V$
containing $w$ in the kernel, hence it is the intersection $\PP(V) \cap \PP(\Lambda^2w^\perp) \subset \PP(\Lambda^2W^\vee)$.
Typically it is a point, and if it is not a point then the map 
\begin{equation*}
V \hookrightarrow \Lambda^2W^\vee \stackrel{w}\twoheadrightarrow w^\perp \subset W^\vee
\end{equation*}
has at least two-dimensional kernel, i.e.\ at most 4-dimensional image. If the image is contained in the subspace $U^\perp \subset w^\perp$
for a 2-dimensional subspace $U \subset W$, then $U$ gives a point on $S$ and $(U,w)$ gives a point on $\PP_S(\CU)$
which projects by $q$ to $w$.
\end{proof}

\begin{remark}
One can show that if the K3 surface $S \subset \Gr(2,W) \subset \PP(\Lambda^2W)$ contains no lines,
then the map $q:\PP_S(\CU) \to \PP(W)$ is a closed embedding. If, however, $S$ has a line $L$
then $\CU_{|L} \cong \CO_L \oplus \CO_L(-1)$ and $\PP_L(\CU_{|L}) \subset \PP_S(\CU)$
is the Hirzebruch surface $F_1$. Denote by $\tilde{L} \subset \PP_L(\CU_{|L})$
its exceptional section. Then the map $q$ contracts $\tilde{L}$ to a point, and does this
for each line in $S$.
\end{remark}

Consider the obtained diagram
\begin{equation*}
\xymatrix{
& \PP_X(\CK) \ar[dl] \ar[dr]^p && \PP_S(\CU) \ar[dl]_q \ar[dr] \\
X && \PP(W) && S,
}
\end{equation*}
choose a generic hyperplane $W_5 \subset W$, and consider the ``induced diagram''
\begin{equation*}
\xymatrix{
& X' := p^{-1}(\PP(W_5)) \ar[dl] \ar[dr]^p && q^{-1}(\PP(W_5)) =: S' \ar[dl]_q \ar[dr] \\
X && \PP(W_5) && S,
}
\end{equation*}
Then it is easy to see that (for general choice of the hyperplane $W_5$) the map $X' \to X$ is the blowup with center in the quintic
del Pezzo surface $R_{W_5}$ (see Remark~\ref{remark-pfaffian-delpezzo}),
and the map $S' \to S$ is the blowup with center in $R'_{W_5} := S \cap \Gr(2,W_5) = \PP(V^\perp) \cap \Gr(2,W_5)$,
which is a codimension 6 linear section of $\Gr(2,5)$, i.e.\ just 5 points. Finally, the map $q:S' \to \PP(W_5)$ is a closed embedding
and the map $p$ is the blowup with center in $q(S') \subset \PP(W_5)$. Thus, the blowup of $X$ in a quintic del Pezzo surface is isomorphic
to the blowup of $\PP(W_5) = \PP^4$ in the surface $S'$, isomorphic to the blowup of the K3 surface $S$ in 5 points.
In particular, $X$ is rational.

To show the implication for the derived category one needs more work.

\begin{theorem}\label{theorem-pfaffian-equivalence}
If $X$ is a pfaffian cubic fourfold and $S$ is the K$3$ surface defined by~\eqref{equation-pfaffian-k3},
then $\CA_X \cong \BD(S)$.
\end{theorem}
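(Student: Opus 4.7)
The plan is to exploit the birational correspondence constructed above, which exhibits a common blowup $X' \cong \Bl_{R_{W_5}}(X) \cong \Bl_{S'}(\PP(W_5))$, and to translate it into an equivalence of derived categories via Orlov's blowup formula (Theorem~\ref{theorem-sod-blowup}). The hope is that after stripping off all the exceptional collections from both sides of $\BD(X')$, what remains on one side is $\CA_X$ and on the other is $\BD(S)$, with the equivalence arising from matching the two decompositions by mutations.

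Applying Orlov's formula to each of the two codimension-$2$ blowup presentations of $X'$ gives
\begin{equation*}
\langle \BD(R_{W_5}), \BD(X) \rangle \;\cong\; \BD(X') \;\cong\; \langle \BD(S'), \BD(\PP(W_5)) \rangle.
\end{equation*}
On the left I substitute the decomposition~\eqref{equation-sod-cubic-4} for $\BD(X)$ together with a full exceptional collection of length $7$ on the quintic del Pezzo surface $R_{W_5}$ (such a collection exists by Orlov's blowup formula applied to $R_{W_5} \cong \Bl_{4\,\mathrm{pts}}(\PP^2)$). On the right I substitute the exceptional collection $\BD(\PP^4) = \langle \CO, \CO(1),\dots, \CO(4) \rangle$ from Example~\ref{example-pn}, and another application of Orlov's formula to $S' = \Bl_{5\,\mathrm{pts}}(S)$, which presents $\BD(S')$ as $\BD(S)$ plus five point contributions. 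The result is two semiorthogonal decompositions of $\BD(X')$, each consisting of ten exceptional objects plus one ``K3-like'' piece, with $\CA_X$ appearing on the left and $\BD(S)$ on the right.

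The main step is then to identify these two decompositions by a sequence of mutations (Remark~\ref{remark-mutations}). One has to select a precise sequence of mutations that rearranges and twists the ten exceptional objects of one decomposition into the ten of the other, thereby forcing the non-exceptional pieces to occupy the same position. The correct choice is dictated by the geometry of the pfaffian construction: the rank-$2$ kernel bundle $\CK$ on $X$ and the tautological subbundle $\CU$ on $S \subset \Gr(2,W)$ become identified after pullback to the common blowup via the map $p\colon X' \to \PP(W)$, and this identification is what transports exceptional objects coming from $\BD(\PP^4)$ and the five point blowups to exceptional objects on $R_{W_5}$ and the three line bundles $\CO_X, \CO_X(-H), \CO_X(-2H)$.

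The main obstacle is the mutation bookkeeping in Step~3: writing down the explicit mutation sequence and verifying at each stage that twists and orthogonality relations are as expected. An alternative route which sidesteps some of this combinatorics is to build the equivalence directly as a Fourier--Mukai functor with kernel a twist of the structure sheaf of the incidence variety
\begin{equation*}
Z \;=\; \{ (x, U) \in X \times S \mid U \subset \ker(x) \} \;\subset\; X \times S,
\end{equation*}
check fully-faithfulness on a spanning class of point sheaves using the explicit description of the fibres of $Z \to X$ and $Z \to S$, and then verify that the image lies in $\CA_X$ by computing $\Hom$'s against $\CO_X, \CO_X(-H), \CO_X(-2H)$; but even in this approach the non-trivial verification ultimately reduces to the same geometric matching that the mutation argument makes explicit.
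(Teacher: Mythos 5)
Your proposal matches the paper's own strategy exactly: compare the two semiorthogonal decompositions of $\BD(X')$ arising from its two blowup presentations, observe that each consists of ten exceptional objects plus one residual piece ($\CA_X$ on one side, $\BD(S)$ on the other), and identify the residual pieces by a sequence of mutations. Like you, the paper acknowledges that the explicit mutation sequence is the nontrivial step that cannot be bypassed (since Jordan--H\"older fails for semiorthogonal decompositions), and it defers the complete argument to the homological projective duality papers \cite{K07,K06b}, which construct the equivalence directly via a Fourier--Mukai kernel on an incidence variety---essentially the alternative route you sketch at the end.
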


A straightforward way to prove this is by considering the two semiorthogonal decompositions of $X'$:
\begin{equation*}
\BD(X') = \langle \BD(R_{W_5}), \CA_X,\CO_{{X'}}(-2H),\CO_{{X'}}(-H),\CO_{{X'}} \rangle,
\qquad
\BD(X') = \langle \BD(R'_{W_5}), \BD(S), \BD(\PP(W_5)) \rangle.
\end{equation*}
Taking into account that $\BD(R_{W_5})$ is generated by an exceptional collection of length 7 (since $R_{W_5}$
is isomorphic to the blowup of $\PP^2$ in 4 points), we see that the first s.o.d.\
consists of 10 exceptional objects and the category $\CA_X$, while the second s.o.d.\ consists of 10 exceptional objects
and $\BD(S)$. So, if we would have a Jordan--H\"older property, it would follow that $\CA_X \cong \BD(S)$.
As the property is wrong, we should instead, find a sequence of mutations taking one s.o.d.\ to the other.

There is also a completely different proof of Theorem~\ref{theorem-pfaffian-equivalence} based on homological projective duality
(see~\cite{K07,K06b}).

\begin{remark}
In fact, the locus of pfaffian cubic fourfolds is a dense open subset in the divisor $\CC_{14}$ in the moduli space
of all cubic fourfolds, which consists of cubic fourfolds containing a quartic scroll. 
\end{remark}

\subsection{Cubics with a plane}

Another very interesting class of cubic fourfolds is formed by cubic fourfolds $X \subset \PP(V)$ containing a plane.
So, let $U_3 \subset V$ be a 3-dimensional subspace and $W_3 := V/U_3$. Assume that $X$ contains $\PP(U_3)$.
Then the linear projection from $\PP(U_3)$ defines a rational map $X \dashrightarrow \PP(W_3)$.
To resolve it we have to blowup the plane $\PP(U_3)$ first. We will get the following diagram
\begin{equation*}
\xymatrix{
& \TX \ar[dl]_\pi \ar[dr]^p \\
X && \PP(W_3)
}
\end{equation*}
The map $p$ here is a fibration with fibers being 2-dimensional quadrics, and the discriminant $D \subset \PP(W_3)$ being a plane sextic curve.
Assume for simplicity that $X$ is sufficiently general, so that $D$ is smooth. Then the double covering 
\begin{equation}\label{equation-plane-k3}
S \to \PP(W_3)
\end{equation} 
ramified in $D$ is a smooth K3 surface (and if $D$ is mildly singular, which is always true for smooth $X$, then one should
take $S$ to be the minimal resolution of singularities of the double covering). Then $S$ comes with a natural Severi--Brauer variety,
defined as follows.

\begin{lemma}
Let $M$ be the Hilbert scheme of lines in the fibers of the morphism $p$. Then the natural morphism $M \to \PP(W_3)$
factors as the composition $M \to S \to \PP(W_3)$ of a $\PP^1$-fibration $q:M \to S$, followed by the double covering $S \to \PP(W_3)$.
\end{lemma}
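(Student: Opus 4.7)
The plan is to analyze the fibers of the natural map $M \to \PP(W_3)$ directly and then identify its Stein factorization with the double cover $S \to \PP(W_3)$.

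First, I would study the fibers of $p : \TX \to \PP(W_3)$. Each fiber $Q_w$ is a quadric surface in $\PP^3$, and under the genericity assumption every fiber has rank at least $3$, with rank exactly $3$ occurring precisely over the smooth plane sextic $D$. By classical projective geometry, a smooth (rank $4$) quadric in $\PP^3$ is isomorphic to $\PP^1\times\PP^1$ and its Fano scheme of lines is a disjoint union of two $\PP^1$'s, one for each ruling; a quadric cone (rank $3$) has a single $\PP^1$ of lines through its vertex, and in the natural flat family this $\PP^1$ appears with multiplicity $2$ (the two rulings having coalesced). Hence the induced morphism $M \to \PP(W_3)$ is proper, flat of relative dimension $1$, and its geometric fibers consist of two disjoint reduced $\PP^1$'s over $\PP(W_3)\setminus D$ and one doubled $\PP^1$ over $D$.

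Next, I would form the Stein factorization $M \to S' \to \PP(W_3)$. By the fiber analysis, $S' \to \PP(W_3)$ is a finite flat morphism of degree $2$, \'etale over the complement of $D$ and simply ramified along the smooth sextic $D$. A double cover of $\PP^2$ branched along a reduced plane sextic is uniquely determined by its branch divisor, since $\Pic(\PP^2)$ is torsion free and therefore the square root $\CO(3)$ of $\CO(D)$ is unique. Consequently there is a canonical isomorphism $S' \cong S$ of double covers over $\PP(W_3)$; composing with $M \to S'$ yields the required factorization $M \to S \to \PP(W_3)$. Finally, by construction $q : M \to S$ has geometrically connected fibers, and by the fiber analysis these are all $\PP^1$'s (the reduced components of the quadric fibers of $p$); combined with flatness, this makes $q$ a $\PP^1$-fibration.

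The main obstacle will be the second step: verifying that the Stein factorization $S'$ coincides with the specified double cover $S$ as a scheme over $\PP(W_3)$, not merely abstractly, and in particular keeping track of the scheme structure over $D$ to justify the ``doubled $\PP^1$'' picture. The cleanest way to handle this is to identify both covers with the relative spectrum of the center of the even Clifford algebra $\Cl_0$ associated to the quadratic form defining $p$; this identification simultaneously exhibits $M$ as the Severi--Brauer variety over $S$ attached to $\Cl_0$, which foreshadows the role of the quadric fibration decomposition~\eqref{equation-sod-quadric-bundle} in the subsequent analysis of $\CA_X$.
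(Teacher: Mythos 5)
Your proposal is correct and follows essentially the same route as the paper: analyze the fibers of the relative Fano scheme of lines (two conics over the complement of $D$, one over $D$), take the Stein factorization, and identify the resulting degree-2 cover with $S$ via the ramification locus. The extra care you take with the schematic (non-reduced) structure of the degenerate fibers and the torsion-freeness of $\Pic(\PP^2)$ makes the argument a bit tighter than the paper's terse sketch, but it is not a genuinely different approach; in particular, the ``obstacle'' you flag at the end is already resolved by your own uniqueness observation, so the Clifford-algebra detour is unnecessary.
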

\begin{proof}
By genericity assumption all fibers of $p$ are either nondegenerate quadrics or quadrics of corank~1.
For a nondegenerate fiber, the Hilbert scheme of lines is a disjoint union of two smooth conics. 
And for a degenerate fiber, it is just one smooth conic. Thus the Stein factorization of the map $M \to \PP(W_3)$ is a composition
of a nondegenerate conic bundle (i.e.\ a $\PP^1$-fibration) with a double covering. Moreover, the double
covering is ramified precisely in $D$, hence coincides with $S$.
\end{proof}

The constructed Severi--Brauer variety $q:M \to S$ corresponds to an Azumaya algebra on $S$, which we denote $\CB_0$.
One can show that its pushforward to $\PP(W_3)$ coincides with the sheaf of even parts of Clifford
algebras of the quadric fibration $p:\TX \to \PP(W_3)$. The derived category $\BD(S,\CB_0)$ of
sheaves of coherent $\CB_0$-modules on~$S$ is known as a ``twisted derived category'' of $S$, or 
as a ``twisted K3 surface''.

\begin{proposition}
If $X$ is a general cubic fourfold with a plane then $\CA_X \cong \BD(S,\CB_0)$.
\end{proposition}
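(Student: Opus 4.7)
The plan is to compare two semiorthogonal decompositions of $\BD(\tilde{X})$, obtained from the two sides of the diagram $X \xleftarrow{\pi} \tilde{X} \xrightarrow{p} \PP(W_3)$, and then to identify the nontrivial components. Since $\tilde{X}$ is the blowup of $X$ along the plane $\PP(U_3)$, which has codimension $2$, Orlov's blowup formula (Theorem~\ref{theorem-sod-blowup}) together with~\eqref{equation-sod-cubic-4} pulled back along $\pi$ gives
\begin{equation*}
\BD(\tilde{X}) = \langle \Phi_{\CO_E(-1)}(\BD(\PP^2)),\ \pi^*\CA_X,\ \CO_{\tilde{X}}(-2H),\ \CO_{\tilde{X}}(-H),\ \CO_{\tilde{X}} \rangle.
\end{equation*}
On the other hand, $p:\tilde{X} \to \PP(W_3) = \PP^2$ is a flat family of $2$-dimensional quadrics inside a $\PP^3$-bundle, so the quadric bundle decomposition~\eqref{equation-sod-quadric-bundle} with $r = 4$ yields
\begin{equation*}
\BD(\tilde{X}) = \langle \BD(\PP^2,\Cl_0),\ \CO_{\tilde{X}}(-h) \otimes Lp^*\BD(\PP^2),\ Lp^*\BD(\PP^2)\rangle,
\end{equation*}
where $h$ is the relative hyperplane class on the $\PP^3$-bundle. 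Expanding $\BD(\PP^2) = \langle \CO(-2),\CO(-1),\CO\rangle$ in both decompositions, each becomes an s.o.d.\ with six exceptional line bundles and one distinguished nontrivial component, namely $\pi^*\CA_X$ in the first and $\BD(\PP^2,\Cl_0)$ in the second.

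The next step is to connect the two decompositions by a sequence of mutations (see Remark~\ref{remark-mutations}). Since mutation of a component across a semiorthogonal neighbour gives an equivalent category (only its embedding into $\BD(\tilde{X})$ changes), it suffices to exhibit mutations that transform the six-tuple of exceptional bundles in the first decomposition into the six-tuple in the second, while placing the nontrivial component in the same slot. Concretely, one uses the relations $H = h + E$ on $\tilde{X}$ (where $E$ is the exceptional divisor of $\pi$, which is a $\PP^1$-bundle over $\PP^2$), together with the Euler and Koszul sequences on $\tilde{X}$ relative to $p$, to mutate the line bundles $\CO_E(-1) \otimes p_E^*\CO(k)$ and $\CO_{\tilde{X}}(-2H),\CO_{\tilde{X}}(-H),\CO_{\tilde{X}}$ into the collection $\CO_{\tilde{X}}(-h-kH'), \CO_{\tilde{X}}(-kH')$ coming from the quadric bundle picture. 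Performing the resulting braid of mutations and invoking uniqueness of the nontrivial component once the two exceptional collections match gives an equivalence $\pi^*\CA_X \simeq \BD(\PP^2,\Cl_0)$, and hence $\CA_X \simeq \BD(\PP^2,\Cl_0)$ because $L\pi^*$ is fully faithful.

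Finally, one identifies $\BD(\PP^2,\Cl_0)$ with $\BD(S,\CB_0)$. By construction, the sheaf $\Cl_0$ is an order on $\PP^2$ whose center, at the generic point, is the degree $2$ étale extension cut out by the discriminant, so $\Cl_0$ descends to an Azumaya algebra on the double cover $S \to \PP^2$, and this descended algebra is precisely $\CB_0$ (the Azumaya algebra associated with the Severi--Brauer variety $M \to S$ of lines in the fibers of $p$). Since the discriminant $D$ is smooth under our genericity assumption, $S$ is a K3 surface and the pushforward functor along $S \to \PP^2$ induces an equivalence $\BD(S,\CB_0) \simeq \BD(\PP^2,\Cl_0)$. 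Combined with the previous step, this yields $\CA_X \simeq \BD(S,\CB_0)$.

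The main obstacle is the bookkeeping in the middle step: one needs an explicit braid of mutations matching the two sets of six exceptional line bundles on $\tilde{X}$, and the natural expressions involve the interaction of the relative hyperplane classes $h$ (from $p$) and $H$ (from $\pi$) with the exceptional divisor~$E$. The identification $\BD(\PP^2,\Cl_0) \simeq \BD(S,\CB_0)$ is standard once the discriminant is smooth; if $D$ is only mildly singular one must instead take the minimal resolution of the double cover and control the Azumaya structure across the singular locus, which is why the proposition is stated for \emph{general}~$X$.
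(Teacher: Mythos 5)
Your proposal is correct and follows essentially the same route as the paper: compare the two semiorthogonal decompositions of $\BD(\TX)$, one coming from Orlov's blowup formula applied to $\pi:\TX\to X$ (which isolates $\CA_X$ together with six exceptional line bundles) and one from the quadric-bundle decomposition of $p:\TX\to\PP(W_3)$ (which isolates the twisted K3 category together with six exceptional line bundles), then pass between them by mutations, citing \cite{K10} for the explicit braid. The only difference is cosmetic: the paper writes $\BD(S,\CB_0)$ directly as the residual component of the quadric-bundle s.o.d., whereas you first write $\BD(\PP(W_3),\Cl_0)$ and then identify it with $\BD(S,\CB_0)$ via pushforward along the double cover $S\to\PP(W_3)$ — a step the paper leaves implicit. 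Your closing remarks about why smoothness of the discriminant (i.e.\ generality of $X$) is used, and about the bookkeeping needed to match the two six-tuples of line bundles, accurately reflect where the real work in \cite{K10} lies.
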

\begin{proof}
Again, one can consider two semiorthogonal decompositions of $\BD(\TX)$:
\begin{align*}
\BD(\TX) = &\langle \BD(\PP(U_3)), \CA_X, \CO_\TX(-2H), \CO_\TX(-H), \CO_\TX \rangle,\\
\BD(\TX) = &\langle \BD(S,\CB_0), \BD(\PP(W_3)) \otimes \CO_\TX(-H), \BD(\PP(W_3)) \rangle,
\end{align*}
the first consists of 6 exceptional objects and the category $\CA_X$, the second again consists of 6 exceptional objects
and the category $\BD(S,\CB_0)$. An appropriate sequence of mutations (see~\cite{K10}) then identifies the categories $\CA_X$ and $\BD(S,\CB_0)$.
\end{proof}

Finally, the following simple argument shows that the quadric fibration $\tilde{X} \to \PP(W_3)$
is rational if and only if the twisting $\CB_0$ of the K3 surface $S$ is trivial.

\begin{theorem}
The quadric fibration $\TX \to \PP(W_3)$ is rational over $\PP(W_3)$ if and only if the Azumaya algebra $\CB_0$ on $S$ is Morita trivial.
\end{theorem}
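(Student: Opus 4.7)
The plan is to reduce both sides of the equivalence to existence of rational points on appropriate generic fibers, and then to verify a direct Galois-theoretic bijection between them. Set $K = k(\PP(W_3))$, let $Q = \TX_\eta$ be the generic fiber of $p$, which is a smooth quadric surface over $K$, let $L = k(S)$ (a quadratic field extension of $K$, since $S \to \PP(W_3)$ is the discriminant double cover), and let $\Pi = M_\eta$ be the generic fiber of $M \to \PP(W_3)$ viewed as an $L$-scheme, so that $\Pi$ is a Severi--Brauer conic over $L$ whose Brauer class is the restriction of $[\CB_0] \in \Br(S)$ to the generic point (this matching is built into the construction of $\CB_0$ as the Azumaya algebra of the $\PP^1$-fibration $M \to S$).

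Next I would reformulate each condition. On the left, a quadric fibration is rational over its base if and only if the generic fiber is $K$-rational, and for a smooth $2$-dimensional quadric this is in turn equivalent to the existence of a $K$-rational point (project from the point to obtain a birational equivalence with $\PP^2_K$). On the right, since $S$ is a smooth projective surface the restriction $\Br(S) \hookrightarrow \Br(k(S)) = \Br(L)$ is injective, so $\CB_0$ is Morita trivial iff its Brauer class is zero iff the conic $\Pi$ has an $L$-rational point. Thus the theorem reduces to showing
\begin{equation*}
Q(K) \neq \emptyset \quad \Longleftrightarrow \quad \Pi(L) \neq \emptyset.
\end{equation*}

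For this I would pass to an algebraic closure $\bar K$ of $K$. Then $Q_{\bar K} \cong \PP^1 \times \PP^1$ has two rulings, canonically identified as a $\mathrm{Gal}(\bar K/K)$-set with $\mathrm{Hom}_K(L,\bar K)$, and correspondingly $\Pi_{\bar K}$ is a disjoint union of two copies of $\PP^1$, one per ruling. A point $q \in Q(K)$ determines the unique pair of lines of $Q_{\bar K}$ through $q$, one in each ruling; since $q$ is Galois-fixed and the Galois action permutes these lines exactly as it permutes the rulings, this pair is Galois-equivariant and so defines an $L$-point of $\Pi$. Conversely, an $L$-point of $\Pi$ unwinds to a Galois-equivariant pair $(\ell_1,\ell_2)$ of lines from the two rulings of $Q_{\bar K}$; two lines from different rulings on $\PP^1\times\PP^1$ meet in exactly one point, and that intersection point is Galois-fixed because the pair is, hence lies in $Q(K)$.

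The hard part will be pinning down the identification of $[\CB_0]$ with the Brauer class of the Severi--Brauer variety $M \to S$, which is part of the Clifford-algebra/quadric-bundle dictionary implicit in the introduction of $\CB_0$ but not spelled out in the excerpt; one also needs to invoke the injectivity of $\Br(S) \to \Br(k(S))$ for smooth projective surfaces. Once these two inputs are granted, everything above is just the classical bijection between rational points of a smooth quadric surface and Galois-stable pairs of rulings through them, now expressed relatively over $\PP(W_3)$.
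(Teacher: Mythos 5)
Your argument is correct, but it takes a genuinely different route than the paper's. The paper works entirely with rational multisections of odd degree: it notes that rationality of $\TX$ over $\PP(W_3)$ is equivalent to an odd-degree rational multisection of $p$, that Morita triviality of $\CB_0$ is equivalent to an odd-degree rational multisection of $q: M \to S$, and then converts between the two by taking the lines (in the fibers of $p$) through the points of a degree-$d$ multisection of $p$ to get a degree-$d$ multisection of $q$, and conversely taking the intersection points of the $d$ vertical and $d$ horizontal lines of a degree-$d$ multisection of $q$ to get a degree-$d^2$ multisection of $p$; since $d$ odd implies $d^2$ odd, this closes the loop. That formulation implicitly invokes Springer's theorem (an odd-degree point on a quadric forces a rational point) and the fact that a conic with an odd-degree closed point splits. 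Your version passes to the generic fiber, works with honest rational points rather than odd-degree multisections, and recasts the same geometric bijection (point on the quadric surface $\leftrightarrow$ Galois-stable pair of lines, one from each ruling) in Galois-descent language; this sidesteps Springer entirely, at the cost of needing the injectivity of $\Br(S) \hookrightarrow \Br(k(S))$ for the smooth projective surface $S$ and the identification of $[\CB_0]$ with the Brauer class of $M \to S$ — both of which you correctly flag as the external inputs. The underlying geometric idea is the same; the student's version is cleaner arithmetically, the paper's is more self-contained geometrically (one never has to leave the total spaces of the fibrations).
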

\begin{proof}
Indeed, the first is equivalent to existence of a rational multisection of $p:\TX \to \PP(W_3)$ of odd degree, and the second is equivalent
to existence of a rational multisection of $q:M \to S$ of odd degree. It remains to note that given a rational multisection of $p$ of degree $d$
and considering lines in the fibers of $p$ intersecting it, we obtain a rational multisection of $q$ also of degree $d$.
Vice versa, given a rational multisection of $q$ of degree~$d$ (i.e. $d$ vertical and $d$ horizontal lines in a general fiber of $p$)
and considering the points of intersection of these vertical and horizontal lines, we obtain a rational multisection of $p$
of degree $d^2$. 
\end{proof}

\subsection{Final remarks}

We conclude this section with a short (and incomplete) list of results and papers touching related subjects.

First, there is a notion of a ``categorical representability'' introduced by M.~Bernardara
and M.~Bolognesi, see~\cite{BerBo}. It is designed for the same purpose as the notion 
of a Griffiths component, but in a slightly different way. 

Second, historically the first conjectural characterization of rationality of cubic fourfolds
was suggested by B.~Hassett in~\cite{Has} in terms of Hodge structures, see also a discussion
in~\cite{Ad}. The relation between the Conjecture of Hassett and Conjecture~\ref{conjecture-cubic4} 
was discussed in~\cite{AdTh}. A general discussion of the category $\CA_X$ for a cubic fourfold
can be found in~\cite{H15}.

Finally, it is worth noting that there are two more (classes of) varieties which behave very 
similarly to cubic 4-folds. The first are the Fano fourfolds of index 2 and degree 10 sometimes 
called Gushel--Mukai fourfolds. Their birational properties are discussed in~\cite{DIM} and~\cite{DK},
and their derived categories in~\cite{KP}. The second class is formed by the so-called K\"uchle
varieties of type (c5), see~\cite{Kuc}. It was shown in {\em loc.\ cit.}\/ that these varieties
have a Hodge structure of a K3 surface in the middle cohomology, and in~\cite{K14} it was conjectured
that their derived categories contain a noncommutative K3 category. However, so far nothing is known
about their birational geometry.


\begin{thebibliography}{XXXX}

\bibitem[Add15]{Ad}
N. Addington, 
{\em On two rationality conjectures for cubic fourfolds},
preprint arXiv:1405.4902, to appear in MRL. 

\bibitem[AT14]{AdTh}
N. Addington, R. Thomas, 
{\em Hodge theory and derived categories of cubic fourfolds} 
Duke Mathematical Journal 163, no. 10 (2014): 1885--1927.

\bibitem[AM72]{AM}
M. Artin, D. Mumford, 
{\em Some elementary examples of unirational varieties which are not rational},
Proc. Lond. Math. Soc. 3, 75-–95 (1972).

\bibitem[Bea00]{Bea00}
A. Beauville, 
{\em Determinantal hypersurfaces}, 
Michigan Math. J 48, no. 1 (2000): 39--64.

\bibitem[Bea15]{Bea15}
A. Beauville, 
{\em The L\"uroth problem},
preprint arxiv:1507.02476.


\bibitem[Bei78]{Bei}
A. Beilinson, 
{\em Coherent sheaves on $\PP^n$ and problems of linear algebra}, 
Functional Analysis and its Applications, {\bf 12}, no. 3 (1978): 214-216.

\bibitem[Ber09]{Ber}
M. Bernardara,
{\em A semiorthogonal decomposition for Brauer-–Severi schemes}, 
Math.\ Nachr.\ 282, no. 10 (2009): 1406--1413.

\bibitem[BB13]{BerBo}
M. Bernardara, M. Bolognesi,
{\em Categorical representability and intermediate Jacobians of Fano threefolds},
Derived categories in algebraic geometry (2013): 1--25.

\bibitem[BK89]{BK}
A. Bondal, M. Kapranov,
{\em Representable functors, Serre functors, and mutations},
Izvestiya RAN,. Ser. Mat. 53:6 (1989), p.~1183--1205.

\bibitem[BO95]{BO95} A. Bondal, D. Orlov,
{\em Semiorthogonal decomposition for algebraic varieties},
preprint math.AG/9506012.
 
\bibitem[BO02]{BO02} A. Bondal, D. Orlov,
{\em Derived categories of coherent sheaves},
Proceedings of the International Congress of Mathematicians,
Vol. II (Beijing, 2002), 47--56, Higher Ed. Press, Beijing, 2002.
 
\bibitem[Bri99]{Br}
T.~Bridgeland,
{\em Equivalences of triangulated categories and Fourier–Mukai transforms},
Bull. London Math. Soc. 31, no. 1 (1999), 25--34.

\bibitem[DIM14]{DIM}
O. Debarre, A. Iliev, L. Manivel,
{\em Special prime Fano fourfolds of degree 10 and index 2}, 
Recent Advances in Algebraic Geometry 417 (2014): 123.

\bibitem[DK]{DK}
O. Debarre, A. Kuznetsov,
{\em Gushel--Mukai varieties: classification and birationalities},
in preparation.

\bibitem[GM99]{GM}
S. Gelfand, Yu. Manin,
{\em Homological algebra}. Springer, 1999.

\bibitem[Har66]{Ha}
R. Hartshorne, 
{\em Residues and duality, Lecture notes of a seminar on the work of A. Grothendieck, given at Harvard 1963/64. With an appendix by P. Deligne.} 
Lecture Notes in Mathematics {\bf 20} (1966).

\bibitem[Has00]{Has}
B. Hassett, 
{\em Special cubic fourfolds}, 
Compositio Mathematica 120, no. 01 (2000): 1--23.

\bibitem[HT15]{HT}
Sh. Hosono, H. Takagi,
{\em Derived Categories of Artin-Mumford double solids},
preprint arXiv:1506.02744.

\bibitem[H06]{H06}
D. Huybrechts, 
{\em Fourier Mukai Transforms}. 
USA: Oxford University Press, 2006.

\bibitem[H15]{H15}
D. Huybrechts, 
{\em The K3 category of a cubic fourfold},
preprint arXiv:1505.01775.

\bibitem[IK15]{IK}
C. Ingalls, A. Kuznetsov,
{\em On nodal Enriques surfaces and quartic double solids},
Mathematische Annalen, Volume 361, Issue 1--2 (2015), pp 107--133.

\bibitem[KO12]{KaOk}
K. Kawatani, S. Okawa,
{\em Derived categories of smooth proper Deligne-Mumford stacks with $p_g > 0$},
preprint 2012.

\bibitem[Kel06]{Ke06} B. Keller,
{\em On differential graded categories},
International Congress of Mathematicians. Vol. II, 151--190, Eur. Math. Soc., Z\"urich, 2006. 

\bibitem[Kuc95]{Kuc}
O. K\"uchle, 
{\em On Fano 4-folds of index 1 and homogeneous vector bundles over Grassmannians}, 
Mathematische Zeitschrift, 218(1):563–-575, 1995.

\bibitem[K96]{K96}
A. Kuznetsov,
{\em An exceptional collection of vector bundles on $V_{22}$ Fano threefolds},
Vestnik MGU, Ser. 1, Mat. Mekh. 1996, no. 3, p.~41--44 (in Russian).

\bibitem[K04]{K04} 
A. Kuznetsov,
{\em Derived categories of cubic and $V_{14}$ threefolds},
Proc. V.A.Steklov Inst. Math, V.~246 (2004), p.~183--207.

\bibitem[K05]{K05} 
A. Kuznetsov,
{\em Derived categories of Fano threefolds $V_{12}$},
Mat. zametki, V. 78, n. 4 (2005), p.~579--594;
translation in Mathematical Notes, V. 78, n. 4 (2005), p.~537--550.

\bibitem[K06a]{K06a} 
A. Kuznetsov,
{\em Hyperplane sections and derived categories},
Izvestiya RAN: Ser. Mat. 70:3 p.~23--128 (in Russian);
translation in Izvestiya: Mathematics 70:3 p.~447--547.

\bibitem[K06b]{K06b} 
A. Kuznetsov,
{\em Homological projective duality for Grassmannians of lines},
preprint math.AG/0610957.

\bibitem[K07]{K07} 
A. Kuznetsov,
{\em Homological projective duality},
Publications Mathematiques de L'IHES, {\bf 105}, n. 1 (2007), 157--220.

\bibitem[K08]{K08} 
A. Kuznetsov,
{\em Derived categories of quadric fibrations and intersections of quadrics},
Advances in Mathematics,  V.~218 (2008), N.~5, 1340--1369.

\bibitem[K09a]{K09a} 
A. Kuznetsov,
{\em Derived categories of Fano threefolds},
Proc. V.A.Steklov Inst. Math, V. 264 (2009), p.~110--122.

\bibitem[K09b]{K09b} 
A. Kuznetsov,
{\em Hochschild homology and semiorthogonal decompositions},
preprint arxiv:0904.4330.

\bibitem[K10]{K10}
A. Kuznetsov,
{\em Derived categories of cubic fourfolds}, 
in "Cohomological and Geometric Approaches to Rationality Problems. New Perspectives" 
Progress in Mathematics, Vol. 282, 2010.

\bibitem[K12]{K12}
A. Kuznetsov, 
{\em Height of exceptional collections and Hochschild cohomology of quasiphantom categories}, 
Journal f\"ur die reine und angewandte Mathematik (Crelles Journal) (2012).

\bibitem[K13]{K13}
A. Kuznetsov, 
{\em A simple counterexample to the Jordan-H\"older property for derived categories}, 
arXiv preprint arXiv:1304.0903 (2013).

\bibitem[K14]{K14}
A. Kuznetsov,
{\em Semiorthogonal decompositions in algebraic geometry},
preprint math.AG/1404.3143, Proceedings of ICM-2014.

\bibitem[K15]{K15}
A. Kuznetsov,
{\em Calabi-Yau and fractional Calabi-Yau categories},
preprint math.AG/1509.07657.

\bibitem[KL12]{KL} 
A. Kuznetsov, V. Lunts,
{\em Categorical resolutions of irrational singularities},
preprint arxiv:1212.6170.

\bibitem[KP]{KP} 
A. Kuznetsov, A. Perry,
{\em Derived categories of Gushel--Mukai varieties},
in preparation.

\bibitem[Oka11]{Ok} 
S. Okawa, 
{\em Semi-orthogonal decomposability of the derived category of a curve}, 
Advances in Math., 228, no. 5 (2011): 2869--2873.

\bibitem[O91]{O91}
D. Orlov, 
{\em Exceptional set of vector bundles on the variety $V_5$}, 
Vestnik Moskov. Univ. Ser. I Mat. Mekh 5 (1991): 69--71.

\bibitem[Ver65]{Ve}
J.-L. Verdier, {\em Cat\'egories d\'eriv\'ees: quelques r\'esultats (Etat O)}, 
Institut des hautes \`etudes scientifiques, 1965.




\end{thebibliography}
\end{document}